\newcommand\R{\mathbb{R}}
\newcommand\C{\mathbb{C}}
\newcommand\Z{\mathbb{Z}}
\newcommand\N{\mathbb{N}}
\def\ovr{\overline}
\newcommand{\TsnCHself}[3]{L^{#1}_t( #3 , \mathcal{H}^{#2}_x(\R^3) )}
\newcommand{\Tsnself}[3]{L^{#1}_t( #3 , L^{#2}_x(\R^3) )}
\newcommand{\LpnR}[1]{L^{#1}_x(\R^3)}
\newcommand{\Tsnselfd}[3]{L^{#1}_t( #3 , L^{#2}_x(\R^d) )}
\newcommand{\LpnRd}[1]{L^{#1}_x(\R^d)}
\newcommand{\CHsnR}[1]{\mathcal{H}^{#1}_x(\R^3)}
\newcommand{\CHsnRd}[1]{\mathcal{H}^{#1}_x(\R^d)}
\newcommand{\Lag}{\langle}
\newcommand{\Rag}{\rangle}
\newcommand{\Lf}{\left}
\newcommand{\Rt}{\right}
\newcommand{\Vt}{\Vert}
\newcommand{\vt}{\vert}
\newcommand{\wt}[1]{\widetilde{#1}}
\newcommand{\Ind}[1]{\mathrm{d}#1}
\newcommand{\ntof}{\wt{\mu}_1,\wt{\mu}_2,\wt{\mu}_3,\wt{\mu}_4}
\newcommand{\nof}{\mu_1,\mu_2,\mu_3,\mu_4}
\newcommand{\thetaof}{\theta_1,\theta_2,\theta_3,\theta_4}
\numberwithin{equation}{section}
\newtheorem{proposition}{Proposition}[section]
\newtheorem{definition}{Definition}[section]
\newtheorem{lemma}{Lemma}[section]
\newtheorem{theorem}{Theorem}[section]
\newtheorem{corollary}{Corollary}[section]
\newtheorem{remark}{Remark}[section]
\begin{document}

\title[Nonlinear Schr\"{o}dinger equations]{ \bf On Growth of Sobolev norms  for  cubic  Schr\"odinger equation with harmonic potential in dimensions $d=2,3$ }

\author{Yilin Song}
\address{Yilin Song
	\newline \indent The Graduate School of China Academy of Engineering Physics,
		Beijing 100088,\ P. R. China}
\email{songyilin21@gscaep.ac.cn}
	
\author{Ruixiao Zhang}
\address{Ruixiao Zhang
		\newline \indent The Graduate School of China Academy of Engineering Physics,
		Beijing 100088,\ P. R. China}
\email{zhangruixiao21@gscaep.ac.cn}
	
\author{Jiqiang Zheng}
\address{Jiqiang Zheng
		\newline \indent Institute of Applied Physics and Computational Mathematics,
		Beijing, 100088, China.
		\newline\indent
		National Key Laboratory of Computational Physics, Beijing 100088, China}
\email{zheng\_jiqiang@iapcm.ac.cn, zhengjiqiang@gmail.com}

\maketitle
	
\begin{abstract}

In this article, we study  the growth of higher-order Sobolev norms for solutions to the defocusing cubic nonlinear Schr\"odinger equation with harmonic potential in dimensions $d=2,3$,
\begin{align}\label{PNLS}
\begin{cases}\tag{PNLS}
i\partial_tu-Hu=\left|u\right|^{2}u,&(t,x)\in\R\times\R^d,\\
u(0,x)=u_0(x),
\end{cases}
\end{align}
where $H=-\Delta+|x|^2$. Motivated by Planchon-Tzvetkov-Visciglia [Rev. Mat. Iberoam., 39 (2023), 1405-1436], we first establish the bilinear Strichartz estimates, which removes the $\varepsilon$-loss of Burq-Poiret-Thomann [Preprint, arXiv: 2304.10979]. To show the polynomial growth of Sobolev norm, our proof relies on the upside-down $I$-method associated to the harmonic oscillator.  Due to the lack of Fourier transform or expansion, we need to carefully control the freqeuncy interaction of the type ``high-high-low-low". To overcome this difficulty, we establish the explicit interaction for  products of eigenfunctions. Our bound covers the result of Planchon-Tzvetkov-Visciglia [Rev. Mat. Iberoam., 39 (2023), 1405-1436] in dimension two and  is new in dimension three. 
		
\end{abstract}

\begin{center}
\begin{minipage}{1000mm}
{ \small {{\bf Key Words:}   Schr\"{o}dinger equation, harmonic oscillator, bilinear estimate, upside-down  I-method,\\ growth of Sobolev norms.}
				{}
			}\\
{ \small {\bf AMS Classification:} {35P25,  35Q55, 47J35.}}
\end{minipage}
\end{center}

%%%%%%%%%%%%%%%%%%%%%%%%%%%%%%%%%%%%%%%%%%%%%%%%%%%%%%%%%%%%%%%%%%%%%%%

%%%%%%%%%%%%%%%%%%%%%%%%%%%%%%%%%%%%%%%%%%%%%%%%%%%%%%%%%%%%%%%%%%%%%%%

\section{Introduction }

In this paper, we study the growth of higher-order Sobolev norms for solutions to the  Cauchy problem of nonlinear Schr\"odinger equation(NLS) with harmonic oscillator of the form
\begin{align}\label{fml-NLS}
\begin{cases}
i\partial_tu-Hu=\left|u\right|^2u, & (t,x)\in\Bbb{R}\times\Bbb{R}^d,\\
u(x,0)=u_0(x)\in\mathcal{H}^s(\Bbb{R}^d),
\end{cases}
\end{align}
where the operator $H$ is defined as
\begin{align*}
H=-\Delta+\left|x\right|^2=\sum_{j=1}^{d}\Big(-\frac{\partial^2}{\partial x_j^2}+x_j^2\Big)
\end{align*}
and $\mathcal{H}^s$ the Sobolev space associated with the harmonic oscillator $H$, which reads as
\begin{align*}
\Vert u\Vert_{\mathcal{H}^s(\Bbb{R}^d)}=\Vert H^\frac{s}{2}u\Vert_{L^2(\Bbb{R}^d)}.
\end{align*}
In particular, $\mathcal{H}^1=H^1\cap L^2(\left|x\right|^2dx)$, where $H^1$ denotes the classical Sobolev space. Similarly, we can define the distorted Sobolev space $\mathcal{W}^{s,p}$ as
\begin{align*}
\Vert u\Vert_{\mathcal{W}^{s,p}(\R^d)}=\| H^\frac{s}{2}u\Vert_{L^p(\R^d)}.
\end{align*}
In \cite{Dzi}, Dziubanski and Glowacki proved the following  equivalence of Sobolev norms: for every $s\geq 0$, there exists $C>0$ such that
\begin{equation}\label{equivalence}
\frac 1{C} \Big(\big\|\vt\nabla\vt^s u\big\|_{L^2}^2 +\big \|\langle x\rangle ^s u\big\|_{L^2}^2\Big)  \leq
\|u\|_{{\mathcal H}^s}^2\leq  C\Big (\big\|\vt\nabla\vt^s u\big\|_{L^2}^2 + \big\|\langle x \rangle^s u\big\|_{L^2}^2\Big).
\end{equation}
	
From the above equivalence, the Hermite-Sobolev norm  can be understood as the weighted Sobolev norm. The Schr\"odinger equation with harmonic oscillator models  magnetic traps in the Bose-Einstein condensation. The trapping potential makes all the spectrum  discrete  on Euclidean space. We denote by $\{\tilde h_m\}_{m=0}^{\infty}$ an orthogonal basis of $L^2(\Bbb{R})$ satisfying that
\begin{align*}
H_0\tilde h_m(x)=(-\partial_x^2+x^2) \tilde{h}_m(x)=(2 m+1)\tilde{h}_m(x),\quad x\in\R.
\end{align*}
For general dimensions, the eigen-basis can be generated by tensor product of $\tilde h_m(x_j)$,
\begin{align*}
(-\Delta+|x|^2)h_m(x)=(2|m|+d)h_m(x),\quad h_m(x)=\prod_{j=1}^{d}\tilde{h}_m(x_j),\;x=(x_1,\cdots,x_d)\in\R^d.
\end{align*}
In recent years, growth of Sobolev norms for solution to the dispersive equation gathered  a lot of attention because of the strong connection with weak wave turbulence phenomenon. However, this phenomenon may not occur if the dispersive effect is strong. For the pure Euclidean space,  Dodson \cite{Dodson} proved the scattering theory for cubic NLS in $\R^2$. By using the persistence of regularity, one can verify that the higher order Sobolev norm will not grow up. For $d=3$, Ginibre-Velo \cite{GV} proved the scattering in $d=3$ in energy space and the higher Sobolev norm will  not grow up.  For general nonlinearities $F(u)=|u|^{p-1}u$, if $1+\frac{4}{d}<p\leq 1+\frac{4}{d-2}$, one can show the scattering in  $\dot{H}^1$, see \cite{KV-note} for more information.
	
The growth of Sobolev norm can be expected when considering the Schr\"odinger equation on compact manifold or with some trapping potential due to the lack of dispersive effect.  From the local well-posedness, one can easily obtain the exponential bound on the Sobolev norm. The first polynomial  upper bound of growth of Sobolev norm was proved by Bourgain \cite{Bourgain-JAM}  for the  cubic NLS on $\Bbb T^2$
\begin{align*}
i\partial_tu+\Delta_{\Bbb T^2}u=|u|^2u
\end{align*}
via the Fourier multiplier method:
\begin{align*}
\|u(t)\|_{H^s(\Bbb T^2)}\leq C|t|^{2(s-1)+}.
\end{align*}
For the Euclidean case, Staffilani \cite{Staffilani-Duke} established the polynomial bound $|t|^{(s-1)+}$. For the circle $\Bbb T^1$, Bourgain \cite{Bourgain-JAM} introduced the upside-down $I$-method and the normal form approach to obtain the polynomial bound for   quintic NLS
\begin{align*}
\|u(t)\|_{H^s(\Bbb T)}\lesssim(1+|t|)^{\frac{1}{2}(s-1)+}.
\end{align*}
Later, Colliander-Kwon-Oh \cite{CKO-JAM} generalized the result of \cite{Bourgain-JAM} to general nonlinearity. Recently, Berti-Planchon-Tzvetkov-Visciglia \cite{BPTV-JDE} further improve the result of \cite{CKO-JAM} by constructing the modified energy. For Hartree equation in $d=2$
\begin{align*}
i\partial_tu+\Delta u=(V*|u|^2)u,
\end{align*}
Sohinger \cite{Sohinger-IUMJ} established the following polynomial bound under some assumption on $V$:
\begin{gather*}
\|u(t)\|_{H^s(\R^2)}\lesssim(1+|t|)^{\frac{4}{7}(s-1)+},\quad x\in\R^2,\\
\|u(t)\|_{H^s(\Bbb T^2)}\lesssim(1+|t|)^{(s-1)+},\quad x\in\Bbb T^2.
\end{gather*}
His proof relies on the combination of upside-down $I$-method and resonant decomposition technique initially developed in \cite{CKSTT2004}. For one-dimensional Hartree equation, we refer to \cite{Sohinger-DCDS,Sohinger-IUMJ}.
	
Very recently, Takaoka \cite{Takaoka} used the upside-down $I$-method combined with the resonant decomposition technique to show the improved bound for the cubic NLS on $\R\times\Bbb T$ 
\begin{align*}
\|u(t)\|_{H^s(\R\times\Bbb T)}\leqslant C|t|^{\frac{1}{2}(s-1)+}.
\end{align*}

For the three-dimensional waveguide $\R^m\times\Bbb T^n$ with $m+n=3$, Zhao-Zheng\cite{Zhao-Zheng} showed that the bound
\begin{align*}
\|u(t)\|_{H^2(\R^m\times\Bbb T^n)}\leqslant C(1+|t|).
\end{align*}
For the compact manifold without boundary,  assuming that the following bilinear Strichartz estimate holds for $s\geq s_0$,
\begin{align*}
\big\|e^{it\Delta_g}\Delta_Nfe^{it\Delta_g}\Delta_Mg\big\|_{L_{t,x}^2([0,1]\times M)}\lesssim\min\{N,M\}^{s}\|f\|_{L^2(M)}\|g\|_{L^2(M)},
\end{align*}
Zhong \cite{Zhong} showed the polynomial bound 
\begin{align*}
\|u(t)\|_{H^s(M)}\lesssim(1+|t|)^{A(s-1)+},
\end{align*}
where 
\begin{align}
A^{-1}=\begin{cases}
(1-2s_0)-,&d=2,\\
\big(1-\frac{d-2}{2(d-2s_0)}\big)\big(1-\frac{d-2}{d-s_0-1}\big)-,&d=3.
\end{cases}
\end{align}
In \cite{Hani1}, for general compact manifold without boundary, Hani showed the bilinear Strichartz estimate holds for $s_0=\frac{1}{2}$ in $d=2$ and $s_0=\frac{d-1}{2}$ in $d\geq3$. Therefore, the above bound in $d=2$ becomes $|t|^{K(s-1)+}$ with $K=\infty-$. 
	
Later,  under the assumption on the following $L^4$ Strichartz estimate
\begin{align*}
\big\|e^{it\Delta_g}\Delta_Nf\big\|_{L^4([0,T]\times M)}\lesssim N^{s_0}\|f\|_{L^2(M)},
\end{align*} 
Planchon, Tzvetkov and Visciglia \cite{PTV-APDE} developed the modified energy method and  showed the improved bound  in $d=2$,
\begin{align*}
\|u(t)\|_{H^s(M)}\lesssim(1+|t|)^{\frac{1}{1-2s_0}(s-1)+}.
\end{align*}
In \cite{Burq1,Burq2}, for general compact manifold without boundary, $s_0\leq \frac{1}{4}$, which implies the polynomial bound is at most $|t|^{2(s-1)+}$.
	
For  quintic NLS on $\Bbb T^3$, the global well-posedness was established in Ionescu-Pausader \cite{Ionescu}. However, whether its Sobolev norm grows up is still an open question. When the initial data has small energy, Deng \cite{Deng} established the following bound
\begin{align*}
\|u(t)\|_{H^s(\Bbb T_\lambda^3)}\lesssim\max\big\{\|u_0\|_{H^s(\Bbb T_\lambda^3)},|t|^{300(s-1)}\big\}
\end{align*} 
where $\Bbb T_\lambda^3$ is the irrational tori. It remains open for the large energy case.  For linear Schr\"odinger equation with potential in compact manifold, we refer to \cite{Delort} for more details.
	
For the Schr\"odinger operator with purely trapping potential, i.e. $-\Delta+V(x)$, one can also expect the similar phenomenon holds. The typical model of potential is that $V(x)=|x|^2$.  Using the modified energy method, Planchon-Tzvetkov-Visciglia \cite{PTV-RMI} showed the polynomial growth in $d=2$,
\begin{align}\label{equ:PTVres}
\|u(t)\|_{\mathcal{H}^s(\R^2)}\lesssim C(1+|t|)^{\frac{2}{3}(s-1)+},
\end{align}
For the partial harmonic oscillator, i.e. $V(x)=\sum_{j=1}^dw_jx_j^2$  with $w_j\in\{0,1\}$, the partial dispersive effect will impact the growth of Sobolev norm. For example, Cheng-Guo-Guo-Liao-Shen \cite{CGgLS-APDE} showed the Sobolev norm for the $3d$ cubic NLS with one-dimensional  harmonic oscillator will not grow up. For the lower-dimension case $H=-\partial_{x}^2-\partial_y^2+y^2$ in $\R^2$,  Deng-Su-Zheng \cite{DSZ-CPAA} showed  the same bound \eqref{equ:PTVres}. For other nonlinearity, Killip-Visan-Zhang \cite{KVZ-CPDE} and Jao \cite{Jao-CPDE} proved the global well-posedness for the energy-critical case. It is also interesting to study the growth of Sobolev norm for the energy-critical NLS with harmonic potential. 
	
The lower bound of growth of Sobolev norms is also interesting problem. Colliander-Keel-Staffilani-Takaoka-Tao \cite{CKSTT-Invent} showed that there exists a time $T>0$ such that the solution $\|u(t)\|_{H^s}\geq K$ with initial data $\|u_0\|_{H^s}\leq\delta\ll1$, $s>1$.  Guardia-Kaloshin \cite{Guardia-Kaloshin} showed the polynomial control  on the time  $T $ that the solution growing up at time $T$.
For the waveguide mannifold $\R\times\Bbb T^d$, Hani-Pausader-Tzvetkov-Visciglia \cite{HPTV} constructed the growing-up solution with the rate $\log t$. There exists a global solution $u(t)$ to cubic NLS such that for  $\varepsilon>0$, $s\in\Bbb N$ and $s\geq 30$,   
\begin{align*}
\|u(0)\|_{H^s(\R\times\Bbb T^d)}<\varepsilon,\quad\limsup_{t\to\infty}\|u(t)\|_{H^s(\R\times\Bbb T^d)}=+\infty.
\end{align*} 
Furthermore, there exists a sequence $t_k\to\infty$ as $k\to\infty$, the following holds
\begin{align*}
\|u(t_k)\|_{H^s(\R\times\Bbb T^d)}\gtrsim\exp(c(\log\log t_k)^\frac12).
\end{align*}
 For the case of harmonic oscillator, Chabert \cite{Chabert-CPDE} constructed a smooth solution growing-up in $\mathcal{H}^1$-norm with rate $(\log t)^\frac{1}{2}$ to the following equation
\begin{align*}
i\partial_tu=Hu+|u|^2u+Vu,
\end{align*}
where $V(t,x)$ is smooth real potential such that $V$ decays to zero in $H^s$ norms as $t\to\infty$. We also mention that one can refer to  \cite{Guardia-Haus-Procesi,Haus-Procesi} for extension of growing up solution for high-order nonlinearities.

\subsection{Main Results}
Our main goal of this paper is to show the polynomial bound for the cubic Schr\"odinger equation with harmonic potential in dimensions $d\in\{2,3\}$. 

\begin{theorem}\label{thm1}
Let $s>1$, there exists a constant $C:=C(\|u_0\|_{\mathcal{H}^s(\R^d)})$ such that the following holds: the global solution $u(t)\in C(\R,\mathcal{H}^s)$ to \eqref{fml-NLS} satisfies
\begin{align*}
\|u(t)\|_{\mathcal{H}^s(\R^d)}\leqslant C(1+|t|)^{\tilde{s}_0(s-1)+},\quad \forall\;t\in\R,
\end{align*}
with
\begin{align*}
\tilde s_0=\begin{cases}
		\frac{2}{3},&d=2,\\
			1,&d=3.
			\end{cases}
\end{align*}
\end{theorem}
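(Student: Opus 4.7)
The plan is to implement the upside-down $I$-method adapted to the harmonic oscillator $H$. Let me smoothly truncate high frequencies via a spectral multiplier $I=I_N$ defined through functional calculus: $m_N(\sqrt{H})$ with $m_N(\lambda)=1$ for $\lambda\lesssim N$ and $m_N(\lambda)=(N/\lambda)^{s-1}$ for $\lambda\gg N$. This gives the comparability $\|Iu\|_{\mathcal{H}^1}\lesssim\|u\|_{\mathcal{H}^s}\lesssim N^{s-1}\|Iu\|_{\mathcal{H}^1}$, so the goal of controlling $\|u(t)\|_{\mathcal{H}^s}$ reduces to bounding $\|Iu(t)\|_{\mathcal{H}^1}$ up to a factor $N^{s-1}$, where the parameter $N$ will eventually be chosen as a power of $t$.

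Applying $I$ to \eqref{fml-NLS}, $Iu$ solves $i\partial_t Iu-H Iu=I(|u|^2u)$, and one computes the time derivative of the natural energy $E(Iu)=\tfrac12\|Iu\|_{\mathcal{H}^1}^2+\tfrac14\|Iu\|_{L^4}^4$. The non-conservation comes from the commutator $I(|u|^2u)-|Iu|^2 Iu$, which after a Littlewood–Paley decomposition into spectral projectors of $H$ reduces to a quartic form whose worst frequency configuration is of high-high-low-low type. To cancel the leading non-resonant contribution, I would introduce a modified energy $\widetilde E(Iu)=E(Iu)+\Lambda_4(u)$, where $\Lambda_4$ is built from the normal form / differentiation by parts applied to the non-resonant piece; its time derivative is then a quintic (sextic) expression which is expected to be of lower order for large $N$.

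The main step is to prove, on each unit-length time interval, an almost-conservation estimate of the form
\begin{equation*}
\bigl|\widetilde E(Iu)(t+1)-\widetilde E(Iu)(t)\bigr|\lesssim N^{-\alpha},
\end{equation*}
where $\alpha=3/2$ in dimension $d=2$ and $\alpha=1$ in dimension $d=3$. The analysis of the resonant quintic remainder is carried out through local-in-time Strichartz estimates together with the bilinear Strichartz estimates for the Hermite propagator established in the earlier part of the paper (which remove the $\varepsilon$-loss of Burq–Poiret–Thomann). The gain in $N$ is extracted from the smoothness of the symbol $m_N^2(\lambda_1)-m_N^2(\lambda_3)$ on the high-high sector, expressed in terms of products of Hermite eigenfunctions rather than Fourier frequencies.

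Iterating the almost-conservation law over $\sim T$ unit steps gives $\widetilde E(Iu)(T)\lesssim 1+TN^{-\alpha}$. Choosing $N$ so that $TN^{-\alpha}\lesssim 1$, i.e.\ $N\sim T^{1/\alpha}$, yields
\begin{equation*}
\|u(T)\|_{\mathcal{H}^s}\lesssim N^{s-1}\sim T^{(s-1)/\alpha},
\end{equation*}
which matches the claimed exponents $\widetilde s_0=2/3$ for $d=2$ and $\widetilde s_0=1$ for $d=3$. The principal obstacle is genuinely the high-high-low-low interaction: without a Fourier transform or a clean orthogonality such as $e^{ix\cdot(\xi_1-\xi_2+\xi_3-\xi_4)}$, estimating $\int h_{m_1}\overline{h_{m_2}}h_{m_3}\overline{h_{m_4}}\,dx$ requires explicit product formulas for Hermite eigenfunctions (Hermite multiplication / convolution identities) combined with sharp $L^p$ bounds on spectral projectors. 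This is where the new input of the paper lies, and it is this quartic eigenfunction interaction estimate that must ultimately be reconciled with the bilinear Strichartz bound to close the iteration.
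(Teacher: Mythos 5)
Your high-level plan agrees with the paper on the scaffolding: a spectrally defined operator $I=m_N(\sqrt{H})$, an almost-conservation estimate of the modified energy with exponents $\alpha=3/2$ for $d=2$ and $\alpha=1$ for $d=3$, iteration on $O(T)$ time steps of length $\gtrsim1$, and the choice $N\sim T^{1/\alpha}$, which yields $\|u(T)\|_{\mathcal{H}^s}\lesssim N^{s-1}\sim T^{(s-1)/\alpha}$. You also correctly identify the high-high-low-low interaction as the crux and that bilinear Strichartz estimates and eigenfunction-product analysis are the needed inputs. However, two steps of the plan diverge from what the paper proves and are not themselves filled in. You propose adding a normal-form correction $\widetilde{E}(Iu)=E(Iu)+\Lambda_4$ obtained by dividing by a resonance function. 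The paper does no such thing: it estimates the increment of the unmodified $E(Iu)$ directly by a case analysis. The normal-form route would require controlling small divisors of the type $\mu_1^2-\mu_2^2+\mu_3^2-\mu_4^2$, which vanish on a large lattice set because the eigenvalues of $H$ are shifted integers; you neither carry out the resonant/non-resonant split nor verify that the resulting resonant and boundary terms close, and the denominator that actually appears in the paper's analysis is $\mu_1^2-\mu_2^2-\mu_3^2-\mu_4^2$ (coming from transferring $H$ onto the product $e_2e_3e_4$), not the quadratic resonance function.

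More seriously, the mechanism that extracts the $N$-gain in the high-high-low-low case is not actually supplied. You appeal to ``smoothness of $m_N^2(\lambda_1)-m_N^2(\lambda_3)$'' (the relevant difference is between $\mu_1$ and $\mu_2$, the two comparable high frequencies) and propose ``Hermite multiplication / convolution identities'' and ``sharp $L^p$ bounds on spectral projectors'' as the needed tools, but these are not what closes the argument. What the paper proves and uses is the iterated integration-by-parts identity
\[
L_0(e_1,e_2,e_3,e_4)=\frac{(-2)^k}{(\mu_1^2-\mu_2^2-\mu_3^2-\mu_4^2)^k}\,L_k(e_1,e_2,e_3,e_4),
\]
stated in Theorem~\ref{thm-inter4}, together with the Coifman--Meyer type spectral multiplier theorem for $H$ (Theorem~\ref{thm-multi}), the sharp bilinear Strichartz estimate (Theorem~\ref{thm-bilinear}), and a further decomposition of the dyadic windows $[N_1,2N_1)$, $[N_2,2N_2)$ into subintervals of length $N_3$ indexed by $a,b$. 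Only with this combination does the small multiplier factor $|1-m(\mu_1)/m(\mu_2)|\sim|a-b|\,N_3/N_2$ survive the summation over $a,b$ because it is paired with the decay $|a-b|^{-(\ell-1)}$ produced by the identity. Bounding the multiplier trivially by $1$ gives only $N^{-1+}$ in $d=2$, strictly weaker than the required $N^{-3/2+}$. Your proposal names the obstacle correctly but does not provide a device capable of overcoming it.
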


\begin{remark}
Based on the refined Strichartz estimate on irrational  tori in \cite{DGG-JFA}, Deng-Germain \cite{DGG-IMRN} showed the same bound for the growth rate of Sobolev norms via the upside-down $I$-method. Our result in $d=3$ match the same bound as in \cite{DGG-IMRN}. Moreover, we can show the similar bound holds for solution to the cubic Schr\"odinger equation on $\Bbb S^3$, which will appear in the future work.
\end{remark}

\begin{remark}
Using our method and the standard $I$-method, we can show the global well-posedness for \eqref{fml-NLS} in low regularity space $\mathcal{H}^s(\R^d)$  for some $s_{loc}<s<1$, where $s_{loc}$ is the regularity such that if the regularity of the initial data is below it, the solution will be ill-posed. We leave it for interesting readers.
\end{remark}

\subsection{Outline of the proof}
Our first result is a bilinear estimate for harmonic oscillator in dimension $d\ge2$ without $\varepsilon$ loss. Let $u_N,v_M$ be both frequency localized with $M\le N$. For further application, we consider a more general bilinear estimate
\begin{equation}\label{fml-Intro-bilinear}
\Vt P(\alpha) e^{itH}u_N(0) \cdot P(\beta) e^{itH}v_M(0)\Vt_{\Tsnselfd{2}{2}{[0,T]}} \lesssim \frac{M^{\frac{d-1}{2}}}{N^{\frac{1}{2}}} N^{\mathrm{Ord}(\alpha)}M^{\mathrm{Ord}(\beta)} \Vt u_N\Vt_{\LpnRd{2}} \Vt v_M\Vt_{\LpnRd{2}}.
\end{equation}
Here $P\in \{\nabla,x\}$ and $\alpha,\beta$ are the multi-index, which will be defined in Definition \ref{def-pre-Palp}. Also, $\operatorname{Ord}(\alpha)$ will be defined in  Definition \ref{def-pre-Palp}.  When $\alpha = \beta = 0$, $P(\alpha),P(\beta)$ degenerate to the identical operator.  
	
For $d=2$ and $\alpha=\beta=0$, in \cite{PTV-RMI},  Planchon,  Tzvetkov and  Visciglia proved bilinear estimates for harmonic oscillator in the sense that there is no derivatives loss.  Let $u_N:=e^{itH}u_N(0)$ and $v_M:=e^{itH}v_M(0)$,  they first utilized virial argument to show that
\begin{equation}\label{fml-Intro-bilinear-xy}
\begin{aligned}
			&\int \Lf( \int_{\vt x-y\vt<M^{-1}} M\vt u_N(x)\nabla_y \ovr{v}_M(y) + \ovr{v}_M(y) \nabla_x u_N(x)\vt^2 \Ind{x}\Ind{y} \Rt)\Ind{t}\\
			\lesssim& N \Vt u_N(0)\Vt^2_{L^2(\R^2)} \Vt v_M(0)\Vt^2_{L^2(\R^2)}.
\end{aligned}
\end{equation}
Combining this with a point-wise bound in \cite{Planchon-Jussieu}
\begin{align*}
\vt\phi(x)\vt^2 \lesssim \lambda^{2} \int_{\vt x-y\vt<(4\lambda)^{-1}} \vt\Delta \phi(y)\vt^2 \Ind{y} + \lambda^2\int_{\vt x-y\vt<(4\lambda)^{-1}} \vt\phi(y)\vt^2 \Ind{y}, 
\end{align*}
they proved that 
\begin{equation}\label{fml-Intro-H1}
\Vt e^{itH}u_N(0) \cdot e^{itH}v_M(0)\Vt^2_{L_t^2\dot H^1([0,T]\times\R^2)} \lesssim M^{2} N \Vt u_N\Vt^2_{L^2(\R^2)} \Vt v_M\Vt^2_{L^2(\R^2)}.
\end{equation}
Moreover, by the Strichartz inequality and Sobolev embedding, one has
\begin{equation}\label{fml-Intro-x2}
\int_{0}^T \Lf( \int_{\R^2} \vt x\vt^2 \vt u_N v_M\vt^2(x) \Ind{x} \Rt) \Ind{t} \lesssim M^2 N \Vt u_N\Vt^2_{L^2(\R^2)} \Vt v_M\Vt^2_{L^2(\R^2)}.
\end{equation}
Together with \eqref{fml-Intro-H1} and \eqref{fml-Intro-x2}, they got
\begin{equation}\label{fml-Intro-CH1}
\Vt e^{itH}u_N(0) \cdot e^{itH}v_M(0)\Vt^2_{L_t^2\mathcal H^1([0,T]\times\R^2)} \lesssim M^{2} N \Vt u_N\Vt^2_{L^2(\R^2)} \Vt v_M\Vt^2_{L^2(\R^2)}.
\end{equation}
By using Littlewood-Paley decomposition to $u_Nv_M$, one can get \eqref{fml-Intro-bilinear}  for $d=2$ and $\alpha=\beta=0$.
	
Our first contribution is extending \eqref{fml-Intro-bilinear} to $d\ge 3$, which removes the $\delta$-derivative loss of \cite{BPT}. Recall that in \cite{PTV-RMI}, they use Strichartz inequality and change of variables to get that
\begin{equation*}
		\mbox{(LHS) of }\eqref{fml-Intro-bilinear-xy} \lesssim \int_{\vt z\vt<M^{-1}} M^d \Vt u_N\Vt_{L_t^{p(d)}([0,T],L_x^{p(d)}(\R^d))}
		\Vt \nabla v_M\Vt_{L_t^{q(d)}([0,T],L_x^{q(d)}(\R^d))} \Ind{z}.
\end{equation*}
When $d=2$, we may take $p(2)=q(2)=4$ and notice that $(4,4)$ is an admissible pair, thus Strichartz inequality can be applied directly. However, for $d \ge 3$, $(p(d),p(d))$ and $(q(d),q(d))$ can not be admissible pair at the same time.  For example we take $p(3)=\frac{10}{3}$ and $q(3)=5$, it is clear that $(5,5)$ is not an admissible pair. Since $u_M$ or $v_N$ is spectrally localized, the operator $|\nabla|$ cannot commute with Littlewood-Paley projector $\Delta_N$ associated to $H$, which makes it difficult to use Bernstein's inequality directly. In the following, we are devoted to estimate
\begin{equation*}
\Vt P(\theta) e^{itH} v_M(0)\Vt_{L_t^{q(d)}([0,T],L_x^{q(d)}(\R^d))},
\end{equation*}
with order  $\mathrm{Ord}(\theta)=1$ and $q(d)>\frac{2(d+2)}{d}$. By Sobolev's embedding, we can obtain an admissible pair but with higher regularity:
\begin{align*}
&\Vt P(\theta) e^{itH} v_M(0)\Vt_{L_t^{q(d)}([0,T],L_x^{q(d)}(\R^d))} \\
\lesssim& \Vt P(\theta) e^{itH} v_M(0)\Vt_{L^{q(d)}_t([0,T], \dot{W}_x^{s(d),r(d)}(\R^d)) }\\
\lesssim &\Vt \vt \nabla\vt^{r(d)} P(\theta) e^{itH} v_M(0)\Vt_{L^{q(d)}_t([0,T], L_x^{s(d)}(\R^d)) },
\end{align*}
where $(q(d),r(d))$ is admissible and $r(d)>0$. Thanks to the interpolation, we may assume that $r(d)$ is an positive even integer, and we can write $\vt \nabla \vt^{r(d)}P(\theta) = P(\wt{\theta})$ for some multi-index $\wt{\theta}$. Observing that $P(\wt{\theta})e^{itH}v_M(0)$ satisfies
\begin{equation*}
(i\partial_t+H)(P(\wt{\theta})e^{itH}v_M(0)) = [P(\wt{\theta}),H] v_M.
\end{equation*}
Hence, using the inhomogeneous Strichartz inequality and the fact that $[P(\wt{\theta}),H]$ can be regarded as linear combination of $P(\theta^\prime)$ with $\mathrm{Ord}(\theta^\prime) = \mathrm{Ord}(\theta)$(even $H$ is order $2$), we finally get
\begin{equation*}
\big\Vt [P(\wt{\theta}),H] v_M\big\Vt_{\Tsnselfd{1}{2}{[0,T]}} \lesssim M^{\vt \wt{\theta}\vt} \Vt v_M\Vt_{\LpnRd{2}}.
\end{equation*}
The detailed proof will be postponed in Section \ref{Sec-bilinear}.
	
The above estimate shows that the commutator $[P(\wt{\theta}),H]$ is harmless, this fact can be adapted to extend \eqref{fml-Intro-bilinear} to non-trivial multi-index $\alpha,\beta$. Up to harmless commutators $[P(\alpha),H]$ and $[P(\beta),H]$, we can reduce \eqref{fml-Intro-bilinear} to estimate
\begin{equation*}
\Vt e^{itH} P(\alpha) u_N(0) \cdot e^{itH} P(\beta) v_M(0)\Vt_{\Tsnselfd{2}{2}{[0,T]}}.
\end{equation*} 
Strichartz inequality can be applied directly. See Corollary \ref{cor-bilinear-PDQD} in Section \ref{Sec-bilinear} for the proof of \eqref{fml-Intro-bilinear} for non-trivial multi-index $\alpha,\beta$.
	
After the preparation for the bilinear Strichartz estimate, we return back to the proof of our main theorem on the growth of Sobolev norm. Though for any $s\ge1$, problem \eqref{fml-NLS} is globally well-posed for $u_0 \in \CHsnRd{s}$. However, since the lack of conservation laws in higher-order regularity, we can not obtain uniformly bound for the norm $\Vt u(t,\cdot)\Vt_{\CHsnRd{s}}$. Here, we define a upside-down $I$-operator which can help us reduce the regularity to the energy level.

\begin{definition}
Let $N \in \mathbb{N}$, $u\in\LpnRd{2}$ and $\mu_k$ be the $k-$th eigenvalue associated with $\sqrt H$, we define operator $I_N$ as 
\begin{equation*}
I_N u := \sum_{k\in\Bbb N} m(N^{-1}\mu_k)\pi_{\mu_k} u,
\end{equation*}
with smooth symbol $m(\xi)$ satisfying
\begin{equation*}
	m(\xi) = \begin{cases}
		1,\,\, &\vt \xi\vt \le 1,\\
				\vt\xi\vt^{s-1},\,\, &\vt \xi\vt \ge 2.
		\end{cases}
\end{equation*}
\end{definition}

In the rest of this paper, we abbreviate $I_N$ to $I$. Thus, the $I$ operator can suppress the regularity of the initial data $u_0$ and gives that
\begin{equation*}
N^{1-s}\Vt u_0\Vt_{\CHsnRd{s}} \lesssim \Vt Iu_0\Vt_{\CHsnRd{1}} \lesssim \Vt u_0\Vt_{\CHsnRd{s}}.
\end{equation*}
Acting $I$ operator on both side of \eqref{fml-NLS}, we derive the $I$-system 
\begin{equation}\label{fml-Intro-NLS-I}
\begin{cases}
i\partial_tIu-HIu=I\big(\left|u\right|^2u\big), & (t,x)\in\Bbb{R}\times\Bbb{R}^d,\\
u(x,0)=Iu_0(x)\in\mathcal{H}^1(\Bbb{R}^d).
\end{cases}
\end{equation}
We can define the energy of \eqref{fml-Intro-NLS-I} as 
\begin{equation}\label{fml-Intro-NLS-I-EI}
E(Iu) := \frac{1}{2}\Vt H^{\frac{1}{2}}Iu\Vt^2_{\LpnRd{2}} + \frac{1}{4} \Vt Iu\Vt^4_{\LpnRd{4}}.
\end{equation}
Since $Iu$ is not a real solution to \eqref{fml-NLS}, $E(Iu)$ does not conserve under the evolution of \eqref{fml-Intro-NLS-I}. Hence, to control the increment of energy $\eqref{fml-Intro-NLS-I-EI}$, we are devoted to consider how the commutator $\vt Iu\vt^2 Iu - I\big(\vt u\vt^2 u\big)$ affect \eqref{fml-Intro-NLS-I}.(Note that if we replace the nonlinearity of \eqref{fml-Intro-NLS-I} with $-\vt Iu\vt^2 Iu$, energy \eqref{fml-Intro-NLS-I-EI} conserves). 
	
Thanks to \eqref{fml-Intro-NLS-I}, we have the rate of change of energy with respect with time $t$:
\begin{equation*}
\frac{d}{dt} E(Iu)(t) = \Re\int_{\R^d}\overline{Iu_t}(-I(|u|^2u)+|Iu|^2Iu)dx,
\end{equation*}
and by the fundamental theorem of calculus, we find that
\begin{equation}\label{fml-Intro-Edt}
\begin{aligned}
& E(Iu)(t) - E(Iu)(0) \\
=& \Re i\sum_{\mu_i}\int_{0}^t\int_{\R^d}\big(1-\frac{m(\mu_1)}{m(\mu_2)m(\mu_3)m(\mu_4)}\big)\overline{{\pi}_{\mu_1}(H Iu)}{\pi}_{\mu_2}(Iu)\overline{{\pi}_{\mu_3}(Iu)}{\pi}_{\mu_4}(Iu)dxdt\\
&+\Re i\sum_{\mu_i}\int_{0}^t\int_{\R^d}\big(1-\frac{m(\mu_1)}{m(\mu_2)m(\mu_3)m(\mu_4)}\big)\overline{{\pi}_{\mu_1}(I(|u|^2u))} ){\pi}_{\mu_2}(Iu)\overline{{\pi}_{\mu_3}(Iu)}{\pi}_{\mu_4}(Iu)dxdt\\
=&I_1+I_2,
\end{aligned}
\end{equation}
for all $t\in[0,\delta]$ and $\mu_1,\mu_2,\mu_3,\mu_4$ be the eigenvalue associated with $\sqrt H$.
	
In the sequel, we will estimate the two parts in \eqref{fml-Intro-Edt} separately. For each $u$, we decompose as $u=\sum_{N}\Delta_Nu$. Without loss of generality, we assume $\mu_j \sim N_j$ with  $N_j\in2^{\Bbb N}$. Therefore, we only need to prove the energy increment in a spectrally-localized version. More precisely, we will show that  there exists $\delta \sim 1$, such that for $t\in[0,\delta]$, the following holds
\begin{equation}\label{fml-Intro-ture-EI}
\Lf\vt E(Iu)(t) - E(Iu)(0)\Rt\vt \lesssim (N_1N_2N_3N_4)^{0-} N^{-\frac{5-d}{2}+},
\end{equation}
for each dyadic pieces.
	
It is worth to mention that the most hard case is the frequency-interaction of the type ``high-high-low-low", i.e. $N_1\sim N_2\gtrsim N\gg N_3,N_4$. In this case, the multiplier 
\begin{equation}\label{fml-Intro-multi}
\Lf\vt 1 - \frac{m(\mu_1)}{m(\mu_2)m(\mu_3)m(\mu_4)} \Rt\vt = \Lf\vt 1 - \Big(\frac{\mu_1}{\mu_2}\Big)^{s-1}\Rt\vt,
\end{equation}
ranges from $0$ and $1$ while $\mu_1,\mu_2$ changes.  Indeed, without using \eqref{fml-Intro-L0-Lk}, merely use $1$ as a trivial upper bound for $\vt 1-(\mu_1/\mu_2)^{s-1}\vt$ instead of $\frac{N_3}{N_2}$, the energy increment for $d=2$ reads
\begin{equation*}
\Lf\vt E(Iu)(t) - E(Iu)(0)\Rt\vt \lesssim N^{-1+},
\end{equation*}
as $t \in[0,\delta]$. It is worse than what we expect. 
	
Since \eqref{fml-Intro-multi} is very close to $0$ when $\mu_1\sim\mu_2$, taking $1$ as a trivial upper bound for \eqref{fml-Intro-multi} will waste this good property. Therefore, to make use of the benefit when $\mu_1$ and $\mu_2$ are sufficiently close, we introduce a new identity for product of eigenfunctions  established in Section \ref{Sec-Interfour}. The result is stated as follows. Let 
\begin{equation*}
L_0(e_1,e_2,e_3,e_4) = \int_{\R^d} e_1(x)e_2(x)e_3(x)e_4(x) \Ind{x},
\end{equation*}
then, we have
\begin{equation}\label{fml-Intro-L0-Lk}
L_0(e_1,e_2,e_3,e_4) = \frac{(-2)^k}{(\mu_1^2 - \mu_2^2- \mu_3^2- \mu_4^2)^k} L_k(e_1,e_2,e_3,e_4).
\end{equation}
Here, $L_k(e_1,e_2,e_3,e_4)$ denotes
\begin{equation*}
L_k(e_1,e_2,e_3,e_4)  = \int_{\R^d} e_1(x)\cdot \wt{R}_{2k}(e_2,e_3,e_4)(x) \Ind{x},
\end{equation*}
where $\wt{R}_{2k}(e_2,e_3,e_4)$ is linear combination of the form
\begin{equation}\label{fml-Intro-wtR}
\wt{R}_{2k}(e_2,e_3,e_4) = P(\alpha) e_2(x)P(\beta) e_3(x)P(\gamma) e_4(x),
\end{equation}
with $P\in\{\nabla,x\}$, $\mathrm{Ord}(\alpha) + \mathrm{Ord}(\beta) + \mathrm{Ord}(\gamma) = 2k$ and $0\le \mathrm{Ord}(\alpha),\mathrm{Ord}(\beta),\mathrm{Ord}(\gamma)\le k$. Here, we refer to Definition \ref{def-pre-Palp} for the precise definition of $P$ and $P(\alpha)$.
	
Assuming that $N_1 = N_2 + J N_3$ for some $J\ge0$, to describe multiplier \eqref{fml-Intro-multi} more precisely, we divide $[N_1,2N_1)$ and $[N_2,2N_2)$ into sub-intervals of length $N_3$:
\begin{gather*}
[N_1,2N_1) = \bigcup_{a} I_a := \bigcup_{a} \big[N_1+a N_3,N_1+(a+1) N_3\big),\\ 
[N_2,2N_2) = \bigcup_{b} I_b := \bigcup_{b} \big[N_1+b N_3,N_1+(b+1) N_3\big).
\end{gather*} 
According to the distance between $a$ and $b$, we make further decomposition:
\begin{gather*}
S_1:=\big\{ (I_a,I_b) : a-b > 8,\,\, I_a\subset [N_1,2N_1),\,\, I_b\subset  [N_2,2N_2) \big\},\\
S_2:=\big\{ (I_a,I_b) : a-b < -8,\,\, I_a\subset [N_1,2N_1),\,\, I_b\subset  [N_2,2N_2) \big\},\\
S_3:=\big\{ (I_a,I_b) : \vt a-b\vt \le 8,\,\, I_a\subset [N_1,2N_1),\,\, I_b\subset  [N_2,2N_2) \big\}.
\end{gather*}
Then, we treat $S_1$ only, since there are finite intervals contained in $S_3$ and $S_1$ is symmetry to that of $S_2$. Thanks to \eqref{fml-Intro-L0-Lk} with $k=3$, the first term in \eqref{fml-Intro-Edt}   becomes
\begin{align*}
&\sum_{S_1} \int_0^T\int \Lf[ 1 - \frac{m(\mu_1)}{m(\mu_2)} \Rt] \frac{(-2)^3}{(\mu_1^2-\mu_2^2-\mu_3^2-\mu_4^2)^3} P_{I_a} \ovr{\pi_{\mu_1}(u_1)} \times {\wt{R}_3(P_{I_b}\pi_{\mu_2}(u_2), \pi_{\mu_3}(u_3), \pi_{\mu_4}(u_4)) }\Ind{x} \Ind{t}\\
&=\sum_{S_1} \int_0^T\int \wt{m}(\mu_1,\mu_3,\mu_3,\mu_4) P_{I_a} \ovr{\pi_{\mu_1}(u_1)} \times {\wt{R}_3(P_{I_b}\pi_{\mu_2}(u_2),\pi_{\mu_3}(u_3), \pi_{\mu_4}(u_4))} \Ind{x} \Ind{t}.
\end{align*}
Meanwhile, we obtain a more refined multiplier expression on different intervals:
\begin{equation}\label{fml-Intro-wtm}
\wt{m}(\mu_1,\mu_3,\mu_3,\mu_4)=\Lf[ 1 - \frac{m(\mu_1)}{m(\mu_2)} \Rt] \frac{(-2)^3}{(\mu_1^2-\mu_2^2-\mu_3^2-\mu_4^2)^3}.
\end{equation}
	
Next, applying the Coifman-Meyer multiplier theorem associated to $H$ in Section \ref{Sec-pre}, we find that \eqref{fml-Intro-wtm} gains a coefficient of $\frac{N_3}{N_2} \ll 1$ while eliminating the derivatives loss which brought by $\wt{R}_3$. Together with the bilinear Strichartz estimate, we obtain the desired bound in $d=2$. The similar argument yields the estimate in $d=3$.
	
For term $I_2$, we will use the almost orthogonal property to treat the case where the maximal frequency is much larger than the second one. For the other cases, we will use the $L^p$ Strichartz estimate and the bound of the multiplier to obtain the bounds, which is much better than $I_1$ since it involves higher-order nonlinearity.  
	
Thus, together with simple argument in Section \ref{Sec-EI-Growth}, we can prove Theorem \ref{thm1}.
	
\textbf{Organization of this paper}
This paper is organized as follows. In Section 2, we  recall some basic definitions and properties of the Hermite eigenfunctions and the Littlewood-Paley theory associated to $H$. 	And, we prove some Bernstein-type estimate. In Section 3, we establish the sharp bilinear Strichartz estimates in higher-dimensions. As applications, we prove the local well-posedness for \eqref{fml-NLS}. Then, we introduce the upsider-down $I$-method and the modified $I$-system. In Section 4, we show the interaction of product of eigenfunctions which is crucial in the proof of main theorem. In Section 5, we prove Theorem \ref{thm1} by establishing the energy increment of modified energy.

\subsection{Notations}
In this paper, we use $A\lesssim B$ to mean that there exists a constant such that $A\leqslant CB$, where the constant is not depending on $B$. We will also use $s+$ or $s-$, which means that there exists a small positive number $\varepsilon$ such that it is equal to $s+\varepsilon$ or $s-\varepsilon$ respectively. Denote $\langle x\rangle$ by $(1+\left|x\right|^2)^\frac{1}{2}$.
	
\subsection*{Acknowledgements:}{The authors are grateful to Professor Nicolas Burq, Fabrice Planchon and Nikolay Tzvetkov
for their invaluable comments and suggestions.	  J. Zheng was supported by National key R\&D program of China: 2021YFA1002500 and NSF grant of China (No. 12271051).}

%%%%%%%%%%%%%%%%%%%%%%%%%%%%%%%%%%%%%%%%%%%%%%%%%%%%%%%%%%%%%%%%%%%%%%%

%%%%%%%%%%%%%%%%%%%%%%%%%%%%%%%%%%%%%%%%%%%%%%%%%%%%%%%%%%%%%%%%%%%%%%%

\section{Preliminaies}\label{Sec-pre}
	
In this section, we give some useful harmonic analysis tools associated with the harmonic  oscillator  operator $H$. 

\subsection{Eigen-problems of harmonic oscillator and function space}

Let $\lambda_m$ be the eigenvalue of $\sqrt H$ and $\{h_m\}$ be the family of $L^2$ orthogonal basis with multi-index $m=(m_1,\cdots,m_d)$. First we give the definition of eigenfunctions of Hermite operator when $d=1$. $h_m$ are Hermite functions($d=1$), which is defined as follows. For each $m\in \Bbb N$, the Hermite function of degree $m$ is defined by
\begin{align*}
h_m(t)= (2^m m!\sqrt{\pi})^{-\frac{1}{2}}H_m(t)e^{-\frac{t^2}{2}}, \quad m=0,1,2\cdots,
\end{align*}
where $H_m(t)$ denotes the Hermite polynomials which reads as
\begin{align*}
H_m(t)=(-1)^me^{t^2}\frac{d^m}{dt^m}(e^{-t^2}).
\end{align*}
Given the multi-index $m=(m_1,\cdots,m_d)\in\Bbb{N}^d$, the $d$-dimensional Hermite function is defined as
\begin{align*}
h_m(x)=\prod_{i=1}^{d}h_{m_i}(x_i),m=m_1+\cdots+m_d
\end{align*}
and the eigenvalues associated with $h_m$ is $2\left|m\right|+d$. More precisely, we have
\begin{align*}
Hh_m(x)=\big(2\left|m\right|+d\big)h_m(x)=\Big(2\sum_{i=1}^dm_i+d\Big)\prod_{j=1}^{d}h_{m_j}(x_j).
\end{align*}
For different choices of $m$, the eigenvalue may be same. We can reorder them in a non-decreasing sequence $\{\lambda_n^2\}_{n\geq0}$ (eigenvalue of $H$) repeated according to their multiplicities, and so that
\begin{align*}
Hh_n(x)=\lambda_n^2h_n(x),\quad \forall\, n\in\N.
\end{align*}
The eigenvalue satisfies the asymptotic formula: $\lambda_n\sim n^{\frac{1}{2d}}$ when $n\to\infty.$

We can define the Sobolev norm for every $u\in \mathcal{H}^s(\Bbb{R}^d)$ via the spectral decomposition,
\begin{align*}
\Vert u\Vert^2_{\mathcal{H}^s(\Bbb{R}^d)}=\sum_{n\in\Bbb{N}}\lambda_n^{2s}\left|c_n\right|^2,
\end{align*}
where 
\begin{align*}
u\stackrel{\triangle}{=}\sum_{n\in\Bbb{N}}\pi_nu=\sum_{n\in\Bbb{N}}c_nh_n=\sum_{n\in\Bbb{N}}\langle u,h_n\rangle h_n,
\end{align*} 
where $\langle\cdot,\cdot\rangle$ is the classical inner product in $L^2$ space.
		
We also have the Sobolev embedding in the frame of harmonic oscillator.

\begin{lemma}
Let $s\in[0,1]$, and $1<p\leqslant q\leqslant\infty$. Then we have
\begin{align*}
\Vert f\Vert_{L^q(\Bbb{R}^d)}\lesssim\Vert f\Vert_{\mathcal{W}^{s,p}}.
\end{align*}
\end{lemma}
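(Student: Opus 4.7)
The plan is to reduce this embedding for the distorted Sobolev space $\mathcal{W}^{s,p}$ to the classical Euclidean Sobolev embedding, by first upgrading the norm equivalence \eqref{equivalence} from $L^2$ to $L^p$. The implicit Sobolev scaling relation $\tfrac{1}{q} \ge \tfrac{1}{p}-\tfrac{s}{d}$ is understood (in particular the endpoint $p=q$, $s=0$ is trivial).

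First I would establish the $L^p$ analogue of \eqref{equivalence}, namely
\begin{equation*}
\|f\|_{\mathcal{W}^{s,p}(\R^d)} = \|H^{s/2} f\|_{L^p(\R^d)} \sim \||\nabla|^s f\|_{L^p(\R^d)} + \|\langle x\rangle^s f\|_{L^p(\R^d)}.
\end{equation*}
This reduces to showing the $L^p$ boundedness of the operators $|\nabla|^s H^{-s/2}$, $\langle x\rangle^s H^{-s/2}$, and their inverses. By interpolation with the trivial $s=0$ case, it is enough to treat $s=1$, where the operators can be written using the creation/annihilation factorization of $H$ (or equivalently, the $H$-adapted Riesz transforms $\nabla H^{-1/2}$ and $x H^{-1/2}$), which are known to be Calder\'on--Zygmund operators on $\R^d$. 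Alternatively one can write $H^{-s/2} = \tfrac{1}{\Gamma(s/2)}\int_0^\infty t^{s/2-1} e^{-tH}\,\mathrm{d}t$ and invoke Mehler's formula for the heat kernel of the harmonic oscillator to obtain pointwise kernel bounds that imply $L^p\to L^p$ (and $L^p \to L^q$) boundedness via the Hardy--Littlewood--Sobolev inequality.

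Given the equivalence, the embedding now follows from two classical facts: the standard Euclidean Sobolev embedding $\||\nabla|^{-s} g\|_{L^q(\R^d)} \lesssim \|g\|_{L^p(\R^d)}$ (Hardy--Littlewood--Sobolev) for the scaling range in question, and the trivial estimate $\|f\|_{L^q} \lesssim \|\langle x\rangle^s f\|_{L^p}$ in the non-scaling-critical regime via H\"older (when applicable), together with the fact that $\langle x\rangle^s f$ dominates the weighted part of $f$. Combining, we obtain
\begin{equation*}
\|f\|_{L^q(\R^d)} \lesssim \||\nabla|^s f\|_{L^p(\R^d)} + \|\langle x\rangle^s f\|_{L^p(\R^d)} \sim \|f\|_{\mathcal{W}^{s,p}(\R^d)}.
\end{equation*}

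The main obstacle is the first step: passing from the $L^2$-based equivalence \eqref{equivalence} to the $L^p$-based one for $1<p<\infty$. This requires nontrivial spectral multiplier theory for $H$ (i.e. that smooth symbols $m(\sqrt{H})$ define bounded operators on $L^p$), which is precisely the kind of Coifman--Meyer/Mikhlin-type result for $H$ used elsewhere in the paper (Section~2). Once this harmonic-analytic input is in hand, the rest of the argument is a direct reduction to the classical Euclidean Sobolev embedding.
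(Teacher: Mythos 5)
The paper states this lemma without proof (it is standard background on the Hermite--Sobolev spaces $\mathcal{W}^{s,p}$), so there is no internal argument to compare against. Your overall strategy---pass from $\mathcal{W}^{s,p}$ to a genuine weighted Euclidean Sobolev norm via a multiplier equivalence and then invoke the classical embedding---is the natural one and, once corrected, does give the result under the implicit constraint $\tfrac1q\ge\tfrac1p-\tfrac sd$ that you flag. Note also that your Step~1, the $L^p$ analogue of \eqref{equivalence}, already appears in the paper as Lemma~\ref{lem-Equi-H-Del} (quoted from \cite{KVZ-CPDE,Jao-CPDE}); you can simply cite it rather than re-derive it via Hermite Riesz transforms or Mehler's kernel, though the routes you sketch are legitimate.

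There is, however, a genuine gap in the way you close the subcritical range. You claim $\|f\|_{L^q}\lesssim\|\langle x\rangle^s f\|_{L^p}$ ``via H\"older (when applicable)'', but for $p\le q$ this H\"older step cannot work: writing $\|f\|_{L^q}=\|\langle x\rangle^{-s}(\langle x\rangle^s f)\|_{L^q}\le\|\langle x\rangle^{-s}\|_{L^r}\|\langle x\rangle^s f\|_{L^p}$ would require $\tfrac1r=\tfrac1q-\tfrac1p\le 0$, which is inadmissible, so the weight does not by itself produce an integrability gain from $p$ to $q$. Two standard fixes: (a)~use the trivial pointwise bound $\langle x\rangle\ge1$, hence $\|f\|_{L^p}\le\|\langle x\rangle^s f\|_{L^p}$, and interpolate the $L^q$ norm between $L^p$ and the scaling-critical $L^{q_0}$ with $\tfrac1{q_0}=\tfrac1p-\tfrac sd$ where HLS applies; or, more cleanly, (b)~observe that $H^{-\tau}$ is bounded on $L^p(\R^d)$ for every $\tau\ge0$---the spectrum of $H$ is $\{2k+d:k\in\N\}$, bounded away from zero, so $\lambda\mapsto\lambda^{-\tau}$ is a Mihlin-type spectral multiplier---hence $\|f\|_{\mathcal{W}^{\sigma,p}}\lesssim\|f\|_{\mathcal{W}^{s,p}}$ for $0\le\sigma\le s$, and one may then pick the unique $\sigma\in[0,s]$ making $(\sigma,p,q)$ scaling-critical and apply HLS there. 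With either replacement the rest of your argument goes through.
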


Let us  define the dyadic localization operators. Let $\eta\in C_0^\infty(\Bbb{R})$ such that $\eta_{[0,1]}=1$ and $\eta_{[2,\infty)}=0$. Set $\psi(x)=\eta(x)-\eta(4x)$. For dyadic number $N\in 2^\Bbb{Z}$, we define the following Littlewood-Paley projector
\begin{align*}
\Delta_N(u)=\begin{cases}
\psi\big(\frac{H}{N^2}\big)u, & \mbox{if }N\geqslant1,\\
0,&\mbox{if }N<1.
\end{cases}
\end{align*}
We notice that if $\lambda_n\notin[\frac{N}{2},\sqrt{2}N]$, then $\Delta_N(h_n)=0$ and $\sum\limits_{N\in2^\Bbb{Z}}\Delta_N(u)=u$. For spectrally-localized function, we have the following Bernstein estimate.

\begin{lemma}[Bernstein's inequality, \cite{BPT}] 
For $d\geq1$ and $2\leq p\leq q\leq\infty$, we have
$$\|\Delta_{N}f\|_{L^q(\R^d)}\leq N^{d(\frac{1}{p}-\frac{1}{q})}\|\Delta_Nf\|_{L^p(\R^d)}.$$
\end{lemma}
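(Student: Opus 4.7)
The plan is to derive Bernstein's inequality from a kernel sup-bound on a fattened spectral cutoff, combined with a uniform $L^p\to L^p$ bound for the same cutoff, and then to interpolate. First I would introduce an auxiliary bump $\tilde{\psi}\in C_c^\infty((0,\infty))$ with $\tilde{\psi}\equiv 1$ on $\operatorname{supp}\psi$, and define the fattened projector $\tilde{\Delta}_N:=\tilde{\psi}(H/N^2)$; by functional calculus $\tilde{\Delta}_N\Delta_N=\Delta_N$, so writing $g:=\Delta_N f$ one has $g=\tilde{\Delta}_N g$, and it suffices to prove
\begin{equation*}
\|\tilde{\Delta}_N g\|_{L^q(\R^d)}\lesssim N^{d(1/p-1/q)}\|g\|_{L^p(\R^d)}, \qquad 1\le p\le q\le\infty,
\end{equation*}
which is a slight strengthening of the statement (the assumption $p\ge 2$ is not needed).

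Next I would establish the endpoint $\|\tilde{\Delta}_N\|_{L^1\to L^\infty}\lesssim N^d$. Choosing $\tilde{\psi}=\tilde{\psi}_1^{\,2}$ with $\tilde{\psi}_1\ge 0$ makes $\tilde{\Delta}_N$ a positive semidefinite operator, so its Schwartz kernel
\begin{equation*}
K_N(x,y)=\sum_n \tilde{\psi}(\lambda_n^2/N^2) h_n(x) h_n(y)
\end{equation*}
obeys the Cauchy-Schwarz style bound $|K_N(x,y)|^2\le K_N(x,x)K_N(y,y)$. Since $\tilde{\psi}$ is bounded and compactly supported in $(0,\infty)$, the pointwise majorization $\tilde{\psi}(s)\lesssim e^{-s}$ yields
\begin{equation*}
K_N(x,x)\lesssim \sum_n e^{-\lambda_n^2/N^2} h_n(x)^2 = e^{-H/N^2}(x,x),
\end{equation*}
and the Mehler formula $e^{-tH}(x,x)=(2\pi\sinh 2t)^{-d/2}e^{-|x|^2\tanh t}\lesssim t^{-d/2}$ evaluated at $t=N^{-2}$ delivers $K_N(x,x)\lesssim N^d$ uniformly in $x\in\R^d$, hence $\|K_N\|_{L^\infty_{x,y}}\lesssim N^d$.

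For the opposite endpoint I would show $\|\tilde{\Delta}_N\|_{L^p\to L^p}\lesssim 1$ uniformly in $N$ and $p\in[1,\infty]$. This can be obtained either by invoking a H\"ormander-type spectral multiplier theorem for the harmonic oscillator, or more concretely by deriving off-diagonal decay $|K_N(x,y)|\lesssim N^d\langle N|x-y|\rangle^{-M}$ for every $M$---via subordinating $\tilde{\psi}$ to the heat semigroup and exploiting Mehler's Gaussian factor in $|x-y|^2$---which then yields $\sup_x\|K_N(x,\cdot)\|_{L^1}\lesssim 1$ and hence $L^1\to L^1$ (and dually $L^\infty\to L^\infty$) boundedness by Schur's test.

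Finally, Riesz-Thorin interpolation between $L^1\to L^\infty$ with norm $N^d$ and $L^p\to L^p$ with norm $O(1)$ delivers $\|\tilde{\Delta}_N\|_{L^p\to L^q}\lesssim N^{d(1/p-1/q)}$ for all $1\le p\le q\le\infty$, and applying this to $g=\tilde{\Delta}_N g$ concludes the proof. The main obstacle is the uniform $L^p$-boundedness of $\tilde{\Delta}_N$: the crude diagonal estimate $K_N(x,x)\lesssim N^d$ is immediate from Mehler, but $L^p\to L^p$ requires genuine off-diagonal decay, which in the harmonic-oscillator setting is delicate because Mehler's formula carries Gaussian weights in $|x|^2+|y|^2$ whose effect on integration in $y$ must be controlled uniformly as $|x|\to\infty$.
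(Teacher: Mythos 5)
The paper gives no proof of this lemma and simply cites \cite{BPT}, so there is no internal argument to compare against; I will assess the proposal on its own. Your strategy is the standard and correct one: fatten the projector to $\tilde\Delta_N$ with $\tilde\Delta_N\Delta_N=\Delta_N$, prove the $L^1\to L^\infty$ endpoint by a kernel bound, pair it with uniform $L^p\to L^p$ boundedness, and interpolate. The $L^1\to L^\infty$ step is done correctly: taking $\tilde\psi\ge 0$ makes $K_N$ the kernel of a positive semidefinite operator, so $|K_N(x,y)|\le K_N(x,x)^{1/2}K_N(y,y)^{1/2}$; the majorization $\tilde\psi(s)\lesssim e^{-s}$ on a fixed compact set and the Mehler diagonal estimate $e^{-tH}(x,x)\lesssim t^{-d/2}$ at $t=N^{-2}$ then give $\sup_{x,y}|K_N(x,y)|\lesssim N^d$. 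Indeed your scheme would give the result for the whole range $1\le p\le q\le\infty$, which is stronger than what is stated.

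The genuine gap is the justification of the uniform $L^p\to L^p$ bound, and in particular your ``more concrete'' route to the off-diagonal decay $|K_N(x,y)|\lesssim_M N^d\langle N|x-y|\rangle^{-M}$. ``Subordinating $\tilde\psi$ to the heat semigroup'' does not produce a valid argument: a compactly supported bump is not completely monotone, so it admits no Bochner subordination $\tilde\psi(s)=\int_0^\infty e^{-us}\,d\mu(u)$, and writing $\tilde\psi(s)=g(s)e^{-s}$ merely transfers the difficulty to $g(H/N^2)$. Nor does Fourier synthesis $\tilde\psi(H/N^2)=\tfrac{1}{2\pi}\int\widehat{\tilde\psi}(\tau)e^{i\tau H/N^2}\,d\tau$ help, because the Schr\"odinger propagator $e^{i\tau H}$ has an oscillatory Mehler kernel with no off-diagonal decay. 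The mechanism that actually yields the pointwise kernel bound is finite propagation speed for the $\sqrt H$-wave group: $\cos(t\sqrt H)$ has kernel supported in $\{|x-y|\le|t|\}$, and writing $\tilde\psi(H/N^2)=\phi(\sqrt H/N)$ with $\phi$ even and using the rapid decay of $\widehat{\phi(\cdot/N)}$ gives the claimed estimate (this is the Cheeger--Gromov--Taylor/Sikora argument, for which the Gaussian heat-kernel bound you read off from Mehler serves as input). Your first alternative---quoting a H\"ormander-type Hermite multiplier theorem (Thangavelu, Epperson, or the Duong--Ouhabaz--Sikora framework)---does close the argument, but the ``concrete'' derivation as written does not, and the crux of the lemma is precisely this step.
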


For the Hermite-Sobolev space $\mathcal{H}^s$, we have the following equivalence:

\begin{lemma}
For $s\geq0$, we have the following equivalence of the Sobolev norms:
\begin{gather*}
\|f\|_{\mathcal{H}^s(\R^d)}^2\simeq\sum_{N\in2^{\Bbb N}}N^{2s}\big\|\Delta_{N} f\big\|_{L^2(\R^d)}^2.
\end{gather*}
\end{lemma}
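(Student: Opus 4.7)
The plan is to prove both inequalities simultaneously by expanding $f$ in the orthonormal basis of Hermite eigenfunctions and reducing the claim to an eigenvalue-by-eigenvalue comparison. Since the $\Delta_N$ are spectral multipliers, everything will diagonalize nicely.

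First, I would write $f = \sum_{n} c_n h_n$ with $c_n = \langle f, h_n\rangle$ and $H h_n = \lambda_n^2 h_n$, so that by definition
\begin{equation*}
\|f\|_{\mathcal{H}^s(\R^d)}^2 = \sum_{n\in\N} \lambda_n^{2s} |c_n|^2.
\end{equation*}
Since $\Delta_N = \psi(H/N^2)$ acts on each eigenfunction as multiplication by the scalar $\psi(\lambda_n^2/N^2)$, orthogonality of $\{h_n\}$ in $L^2$ gives
\begin{equation*}
\|\Delta_N f\|_{L^2}^2 = \sum_{n\in\N} \big|\psi(\lambda_n^2/N^2)\big|^2 |c_n|^2.
\end{equation*}
Multiplying by $N^{2s}$ and swapping the order of summation (Tonelli),
\begin{equation*}
\sum_{N\in 2^{\N}} N^{2s} \|\Delta_N f\|_{L^2}^2 = \sum_{n\in\N} |c_n|^2 \, \Phi_s(\lambda_n), \qquad \Phi_s(\lambda) := \sum_{N\in 2^{\N}} N^{2s}\big|\psi(\lambda^2/N^2)\big|^2.
\end{equation*}
The entire statement thus reduces to the pointwise comparison $\Phi_s(\lambda_n) \simeq \lambda_n^{2s}$ uniformly in $n$.

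For that comparison I would use the support properties of $\psi = \eta(\cdot) - \eta(4\cdot)$: the factor $\psi(\lambda^2/N^2)$ vanishes unless $\lambda/\sqrt{2} \le N \le 2\lambda$, so for each $\lambda$ only $O(1)$ dyadic scales $N$ contribute, and each such $N$ satisfies $N \simeq \lambda$. This immediately gives the upper bound $\Phi_s(\lambda) \lesssim \lambda^{2s}$ since $|\psi| \le \|\psi\|_\infty \lesssim 1$. For the lower bound I would invoke the telescoping identity
\begin{equation*}
\sum_{N\in 2^{\N}} \psi(x/N^2) = 1 \qquad \text{for all } x \ge 1/2,
\end{equation*}
which follows from $\psi(x) = \eta(x) - \eta(4x)$ and $\mathrm{supp}\,\eta \subset [0,2]$. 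Since the harmonic oscillator has $\lambda_n^2 \ge d \ge 1 > 1/2$, this identity applies at $x = \lambda_n^2$. Combined with the fact that only $O(1)$ summands are non-zero, the Cauchy--Schwarz inequality yields $\sum_N |\psi(\lambda_n^2/N^2)|^2 \gtrsim 1$; since each contributing $N$ satisfies $N^{2s} \gtrsim \lambda_n^{2s}$, we conclude $\Phi_s(\lambda_n) \gtrsim \lambda_n^{2s}$.

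Putting the two bounds together gives $\Phi_s(\lambda_n) \simeq \lambda_n^{2s}$ with constants depending only on $s$ and $\psi$, and substituting back proves the desired equivalence. There is no serious obstacle here: the only mild point is verifying that the cut $N \in 2^{\N}$ (rather than $N \in 2^{\Z}$) loses nothing, which is exactly the role played by the spectral gap $\lambda_n \ge 1$ of $H$ on $\R^d$.
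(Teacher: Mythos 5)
Your proof is correct, and it is the standard spectral/square-function argument for this kind of Littlewood–Paley equivalence; the paper simply states the lemma without proof, so there is no competing argument to compare against. The diagonalization step $\|\Delta_N f\|_{L^2}^2=\sum_n|\psi(\lambda_n^2/N^2)|^2|c_n|^2$, the reduction to $\Phi_s(\lambda_n)\simeq\lambda_n^{2s}$, the observation that $\operatorname{supp}\psi\subset[1/4,2]$ forces $N\in[\lambda_n/\sqrt2,2\lambda_n]$ (so only $O(1)$ dyadic scales contribute and each satisfies $N\simeq\lambda_n$), and the telescoping identity $\sum_{N\in2^{\N}}\psi(x/N^2)=1-\eta(4x)=1$ for $x\ge1/2$ (valid here since the bottom of the spectrum of $H$ is $d\ge1$, so $\lambda_n^2\ge1$ and $\eta(4\lambda_n^2)=0$) together with Cauchy–Schwarz for the lower bound are all sound. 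The only cosmetic point: the spectral gap is really $\lambda_n\ge\sqrt d$, not merely $\lambda_n\ge1$, but your weaker bound is already sufficient, and your remark about $2^{\N}$ versus $2^{\Z}$ correctly identifies why no scales are lost.
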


Next, we introduce a equivalent characterization of Sobolev norms, that offer convenience in dealing with $\CHsnRd{1}$ norms. We refer to \cite{KVZ-CPDE,Jao-CPDE}.

\begin{lemma}[Equivalent of the Sobolev norms]\label{lem-Equi-H-Del}
Let $\gamma \in [0,1]$ and $1<p<\infty$. Then, for any function $f$
\begin{equation}
\Vt H^{\gamma}f\Vt_{\LpnRd{p}} \simeq \Vt (-\Delta)^\gamma f\Vt_{\LpnRd{p}} + \big\Vt \vt x\vt^{2\gamma} f\big\Vt_{\LpnRd{p}}.
\end{equation}
\end{lemma}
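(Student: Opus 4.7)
My plan is to reduce the equivalence to the two endpoints $\gamma=0$ and $\gamma=1$ and interpolate in between via Stein's complex interpolation theorem. The case $\gamma=0$ is tautological, so the substantive content lies at $\gamma=1$.

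At the $\gamma=1$ endpoint, one direction, $\Vt Hf\Vt_{\LpnRd{p}}\lesssim \Vt\Delta f\Vt_{\LpnRd{p}}+\big\Vt \vt x\vt^{2}f\big\Vt_{\LpnRd{p}}$, is immediate from $H=-\Delta+\vt x\vt^{2}$ and the triangle inequality. The converse direction amounts to showing that $(-\Delta)H^{-1}$ and $\vt x\vt^{2}H^{-1}$ extend boundedly on $L^{p}(\R^{d})$ for $1<p<\infty$, or equivalently that the second-order Hermite--Riesz transforms $\partial_{i}\partial_{j}H^{-1}$, $x_{i}x_{j}H^{-1}$, and the mixed operators $x_{i}\partial_{j}H^{-1}$ are $L^{p}$-bounded. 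I would establish this classical fact (going back to Thangavelu) by writing $H^{-1}=\int_{0}^{\infty}e^{-tH}\,dt$, using the explicit Mehler representation for $e^{-tH}$, and verifying the Calder\'on--Zygmund size and regularity conditions on the resulting (non-translation-invariant) kernels.

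To pass from the endpoints to general $\gamma\in(0,1)$, I would apply Stein's complex interpolation to the analytic family $T_{z}:=H^{z}\bigl((-\Delta)^{z}+\vt x\vt^{2z}\bigr)^{-1}$, suitably regularized, on the strip $0\le\Re z\le 1$. The vertical-line bounds at $\Re z=0$ reduce to $L^{p}$-boundedness of the imaginary powers $H^{is}$ and $(-\Delta)^{is}$ with at most polynomial growth in $s$, which is again a standard Calder\'on--Zygmund statement; the bounds at $\Re z=1$ come from the endpoint just established. The analogous analysis applied to $T_{z}^{-1}$ yields the reverse inequality.

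The main obstacle is precisely the $\gamma=1$ converse: since the Mehler kernel is not translation-invariant, Fourier-multiplier methods are unavailable, and the Calder\'on--Zygmund conditions have to be checked by hand from the explicit form of the kernel. Fortunately, as indicated in the sentence preceding the statement, this entire equivalence is already proved in \cite{KVZ-CPDE,Jao-CPDE}, so in practice the lemma can be invoked by citation rather than reproved from scratch.
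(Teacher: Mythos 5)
The paper gives no proof of this lemma; the sentence immediately preceding it cites \cite{KVZ-CPDE,Jao-CPDE}, exactly as you note at the end of your proposal, so there is no internal argument to compare against.

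Read as a self-contained sketch, your interpolation step has a genuine gap. The family $T_z := H^z\bigl((-\Delta)^z + |x|^{2z}\bigr)^{-1}$ is not known to be well-defined on the boundary line $\Re z = 0$: there $(-\Delta)^{is} + |x|^{2is}$ is a sum of two unitaries, and such a sum need not be invertible (it fails whenever $-1$ lies in the spectrum of $(-\Delta)^{-is}|x|^{2is}$, which your imaginary-power bounds do nothing to preclude). Moreover, at $\Re z = 1$ the denominator $(-\Delta)^{1+is} + |x|^{2+2is}$ is not $H$, so that endpoint does not reduce to the $\gamma=1$ equivalence you established either. The standard repair --- and what \cite{KVZ-CPDE} actually does --- is to interpolate the two one-operator families $(-\Delta)^z H^{-z}$ and $|x|^{2z}H^{-z}$ separately, each of which is uniformly bounded on $\Re z=0$ by imaginary-power estimates for $(-\Delta)$ and $H$, and on $\Re z=1$ by the Hermite--Riesz Calder\'on--Zygmund bounds; this yields $\|(-\Delta)^\gamma f\|_{L^p} + \||x|^{2\gamma}f\|_{L^p} \lesssim \|H^\gamma f\|_{L^p}$. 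For the converse you can write
\[
H^\gamma f \;=\; H^{\gamma-1}(-\Delta)^{1-\gamma}\bigl[(-\Delta)^\gamma f\bigr] \;+\; H^{\gamma-1}|x|^{2(1-\gamma)}\bigl[|x|^{2\gamma} f\bigr]
\]
and observe that $H^{-\beta}(-\Delta)^\beta$ and $H^{-\beta}|x|^{2\beta}$ are the $L^{p'}$-adjoints of the families just bounded, hence $L^p$-bounded for $1<p<\infty$. This gives the lemma with no inversion of $(-\Delta)^z + |x|^{2z}$ ever required.
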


Next, we prove a Bernstein type inequality. We first introduce some definition  
\begin{definition}\label{def-pre-Palp}
Let $\alpha$ be a multi-index of order $k$ which is given by
\begin{equation*}
\alpha = (\alpha_1,\alpha_2,\cdots,\alpha_k),\,\,\text{with}\,\, \alpha_j\in\{1,2\},
\end{equation*}
and we denote the length of $\alpha$ by $\mathrm{Ord}(\alpha)$.
		
For operator $P\in \{x,\nabla\}$, $\mathrm{Ord}(\alpha)=1 ($i.e. $\alpha=\alpha_1)$, and scale-valued function $u$, we define
\begin{equation*}
P(\alpha)u = P(\alpha_1)u=
\begin{cases}
\nabla u, &\alpha_1 = 1,\\
xu, &\alpha_1 = 2.
\end{cases}
\end{equation*}
As for vector valued function $\vec{u}:\R^d\to\R^d$, we also define
\begin{equation*}
P(\alpha)\vec{u} = P(\alpha_1)\vec{u}=
\begin{cases}
{\rm div}(\vec{u}), &\alpha_1 = 1,\\
x\cdot \vec{u}, &\alpha_1 = 2.
\end{cases}
\end{equation*}
		
Then, for $\mathrm{Ord}(\alpha) = k\ge2$ and function $u$, we define
\begin{equation*}
P(\alpha)u = P(\alpha_1,\alpha_2,\cdots,\alpha_k)u = \prod_{j=1}^k P(\alpha_j) u.
\end{equation*}
		
For example, when $\mathrm{Ord}(\alpha)=2$, there are four possible forms of $P(\alpha)$:
\begin{equation*}
P(1,1)u = \Delta u,\,\, P(2,1)u = x\cdot\nabla u,\,\, P(1,2)u = div(xu),\,\, P(2,2)u = \vt x\vt^2 u.
\end{equation*}
\end{definition}

\begin{lemma}[Bernstein type inequality]\label{lem-Bernstein}
Let $N\in \mathbb{N}$ and frequency localized function $u_N \in L_x^2(\R^d)$ satisfies
\begin{equation*}
\Delta_N u_N = u_N.
\end{equation*}
Then, for $k\in\mathbb{N}$ and multi-index $\alpha$ with $\mathrm{Ord}(\alpha) = k$, we have
\begin{equation}\label{fml-pre-lem-res}
\Vt P(\alpha) u_N\Vt_{\LpnRd{2}} \lesssim N^k \Vt u_N\Vt_{\LpnRd{2}}.
\end{equation}
\end{lemma}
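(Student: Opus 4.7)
The plan is to prove this by induction on $k=\mathrm{Ord}(\alpha)$. Since the vector operators $\nabla$, $\mathrm{div}$ and multiplication by $x$ (or dot product with $x$) decompose component-wise into operations of the form $\partial_{x_j}$ and $x_j$, after expanding $P(\alpha)u_N$ coordinate by coordinate and applying the triangle inequality it suffices to prove $\|Q_1 Q_2\cdots Q_k u_N\|_{L^2}\lesssim N^k \|u_N\|_{L^2}$ for any product of scalar operators $Q_i\in\{x_j,\partial_{x_j}:1\le j\le d\}$; this reduction costs only a combinatorial factor depending on $k$ and $d$.

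The base case $k=1$ is the identity
\[
\|Q_1 u_N\|_{L^2}^2 \le \|\nabla u_N\|_{L^2}^2 + \|x u_N\|_{L^2}^2 = \langle Hu_N,u_N\rangle \lesssim N^2 \|u_N\|_{L^2}^2,
\]
where the last step uses the frequency localization $\Delta_N u_N = u_N$ together with the spectral calculus for $\sqrt H$. Crucially, the intermediate inequality $\|Q_1 f\|_{L^2}^2\le \langle Hf,f\rangle$ persists for \emph{any} function $f$, without any localization assumption.

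For the inductive step ($k\ge 2$), write $Q_1 Q_2\cdots Q_k u_N = Q_1 \widetilde Q u_N$ with $\widetilde Q := Q_2\cdots Q_k$ of order $k-1$. Applying the unconditional $k=1$ bound to $\widetilde Q u_N$ and then the commutator identity $H\widetilde Q = \widetilde Q H + [H,\widetilde Q]$ yields
\[
\|Q_1\widetilde Q u_N\|_{L^2}^2 \lesssim \langle \widetilde Q (Hu_N), \widetilde Q u_N\rangle + \langle [H,\widetilde Q] u_N, \widetilde Q u_N\rangle.
\]
Since $[\Delta_N,H]=0$, the function $Hu_N$ remains frequency-localized at scale $N$ with $\|Hu_N\|_{L^2}\lesssim N^2\|u_N\|_{L^2}$. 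The key algebraic input is the pair of identities $[H,x_j]=-2\partial_{x_j}$ and $[H,\partial_{x_j}]=-2x_j$; via the Leibniz rule, these imply that $[H,\widetilde Q]$ is a sum of $O(k)$ products of order $k-1$ with factors in $\{x_j,\partial_{x_j}\}$, rather than the naively expected order $k+1$. Three applications of the induction hypothesis then give
\[
\|\widetilde Q(Hu_N)\|_{L^2}\lesssim N^{k+1}\|u_N\|_{L^2},\quad \|\widetilde Q u_N\|_{L^2}\lesssim N^{k-1}\|u_N\|_{L^2},\quad \|[H,\widetilde Q]u_N\|_{L^2}\lesssim N^{k-1}\|u_N\|_{L^2},
\]
and Cauchy--Schwarz on the two inner products produces the bound $\lesssim N^{2k}\|u_N\|_{L^2}^2$, closing the induction.

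The only delicate point is the commutator counting: $H\widetilde Q$ and $\widetilde Q H$ are each a priori of order $k+1$, but their leading contributions cancel thanks to $[H,x_j]=-2\partial_{x_j}$ and $[H,\partial_{x_j}]=-2x_j$, each of which swaps an $x$-factor with a $\partial$-factor without changing the total factor count. This is the only step where the specific algebraic structure of $H=-\Delta+|x|^2$ is used, and it is the crucial mechanism that allows the induction to propagate the gain of $N$ per factor without any derivative loss.
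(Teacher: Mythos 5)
Your proof is correct and takes a genuinely different route from the paper's. The paper starts from the case $k=2$, where the diagonal entries $\Delta u_N$ and $|x|^2 u_N$ are bounded directly through $\|Hu_N\|_{L^2}\lesssim N^2\|u_N\|_{L^2}$ together with the norm equivalence $\|Hf\|_{L^2}\simeq\|\Delta f\|_{L^2}+\||x|^2f\|_{L^2}$, while the mixed term $x\cdot\nabla u_N$ is handled by citing an estimate from \cite{PTV-APDE}; the case $k=1$ is then obtained by interpolation, and the paper passes to general $k$ by peeling off the last factor and iterating. Your argument instead reduces everything to scalar strings $Q_1\cdots Q_k$ with $Q_i\in\{x_j,\partial_{x_j}\}$, proves $k=1$ from the quadratic-form identity $\|\nabla f\|_{L^2}^2+\|xf\|_{L^2}^2=\langle Hf,f\rangle$, and closes the induction via the commutator relations $[H,x_j]=-2\partial_{x_j}$, $[H,\partial_{x_j}]=-2x_j$, which drop the order of $[H,\widetilde Q]$ by two. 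This buys you two things the paper's proof does not: it is self-contained (no appeal to \cite{PTV-APDE} for the cross term), and it only ever invokes the inductive hypothesis on genuinely frequency-localized inputs, namely $u_N$ and $Hu_N$ (the latter localized since $[\Delta_N,H]=0$). The paper's inductive step, by contrast, applies the hypothesis to $P(\alpha_k)u_N$, which is no longer exactly spectrally localized at scale $N$; your commutator mechanism circumvents this subtlety entirely. The only technical overhead in your approach is the $O(k)$-term Leibniz expansion of $[H,\widetilde Q]$, which you address correctly, and the fact that your base-case pointwise inequality $\|Q_1 f\|_{L^2}^2\le\langle Hf,f\rangle$ holds unconditionally, which you also flag as the ingredient that makes the inductive step legitimate.
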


\begin{proof}
We prove by induction. For $k = 2$, on one hand we have
\begin{equation*}
\Vt \vt x\vt^2 u_N\Vt_{\LpnRd{2}} + \Vt \Delta u_N\Vt_{\LpnRd{2}} \lesssim \Vt H u_N\Vt_{\LpnRd{2}} \lesssim N^2 \Vt u_N\Vt_{\LpnRd{2}}.
\end{equation*}
On the other hand, from \cite{PTV-APDE} we also get
\begin{equation*}
\Vt x\cdot\nabla u_N\Vt_{\LpnRd{2}} \lesssim N^2 \Vt u_N\Vt_{\LpnRd{2}}.
\end{equation*}
		
For $k=1$, by the result for $k=2$ and interpolation, we have
\begin{equation*}
\Vt xu_N\Vt_{\LpnRd{2}} \lesssim \Vt \vt x\vt^2 u_N\Vt^{\frac{1}{2}}_{\LpnRd{2}} \Vt u_N\Vt^{\frac{1}{2}}_{\LpnRd{2}}\lesssim N \Vt u_N\Vt_{\LpnRd{2}},
\end{equation*}
and
\begin{equation*}
\Vt \nabla u_N\Vt_{\LpnRd{2}} \lesssim \Vt \Delta u_N\Vt^{\frac{1}{2}}_{\LpnRd{2}} \Vt u_N\Vt^{\frac{1}{2}}_{\LpnRd{2}}\lesssim N \Vt u_N\Vt_{\LpnRd{2}}.
\end{equation*}
Thus, \eqref{fml-pre-lem-res} holds for $k=1,2$.
		
Assuming that \eqref{fml-pre-lem-res} holds for all multi-index $\alpha$ of order no more than $k$ with $\alpha = (\alpha_1,\alpha_2,\cdots,\alpha_k)$, then we get
\begin{align*}
\Vt P(\alpha) u_N\Vt_{\LpnRd{2}} &= \Vt P(\alpha_1,\alpha_2\cdots,\alpha_{k-1}) P(\alpha_k) u_N\Vt_{\LpnRd{2}} \\
&\lesssim N^{k-1} \Vt P(\alpha_k) u_N\Vt_{\LpnRd{2}} \\
&\lesssim N^k \Vt u_N\Vt_{\LpnRd{2}},
\end{align*}
and \eqref{fml-pre-lem-res} follows for all $k\in\mathbb{N}$.
\end{proof}

By the definition of Sobolev norm and using the  functional calculus and the semi-classical defect measure \cite[Proposition 5.1]{BPT}, they show the following property
\begin{proposition}\label{Prop-pre-BPT}
	For all $s>0$, there exists two constants $C_1,C_2>0$ such that for all $n \in \N$ and eigenfunctions $h_n(x)$, there holds
	\begin{equation*}
	{C_1\lambda_n^{s} \le \Vt (-\Delta)^{s/2}h_n \Vt_{\LpnRd{2}} \le C_2 \lambda_n^s,}
	\end{equation*}
	and
	\begin{equation*}
		C_1\lambda_n^{s} \le \Vt \Lag x\Rag^{s/2}h_n \Vt_{\LpnRd{2}} \le C_2 \lambda_n^s.
	\end{equation*}
\end{proposition}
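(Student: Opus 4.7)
The plan is the following. The upper bounds are immediate: applying the Dziubanski--Glowacki equivalence \eqref{equivalence} to $h_n$ together with $\|H^{s/2}h_n\|_{L^2}=\lambda_n^s$ controls each of $\||\nabla|^s h_n\|_{L^2}$ and $\|\langle x\rangle^s h_n\|_{L^2}$ by $C\lambda_n^s$. So the substantive work lies in the lower bounds.

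To prove the lower bounds I would first handle $s=1$ via the standard virial identity. Taking $A = -\mathrm{i}(x\cdot\nabla+\nabla\cdot x)/2$ and using $\langle[H,A]h_n,h_n\rangle=0$ (because $h_n$ is an eigenfunction), the commutator $[H,A]=2\mathrm{i}(\Delta+|x|^2)$ gives $\|\nabla h_n\|_{L^2}^2=\||x|h_n\|_{L^2}^2$; combined with $\langle Hh_n,h_n\rangle=\lambda_n^2$ this forces both to equal $\lambda_n^2/2$.

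I would then propagate to general $s>0$ by regarding $|\hat h_n(\xi)|^2\,d\xi$ (resp.\ $|h_n(x)|^2\,dx$) as a probability measure. For $s\geq 1$, convexity of $y\mapsto y^s$ and Jensen's inequality yield
\begin{equation*}
\|(-\Delta)^{s/2}h_n\|_{L^2}^2=\int|\xi|^{2s}|\hat h_n|^2\,d\xi\geq\Big(\int|\xi|^2|\hat h_n|^2\,d\xi\Big)^s=(\lambda_n^2/2)^s,
\end{equation*}
and analogously for $\langle x\rangle^s h_n$. For $0<s<1$, since $y^s$ is concave, Jensen goes the wrong way; here I would use the elementary estimate $|\xi|^2\mathbf{1}_{|\xi|^2\leq M}\leq M^{1-s}|\xi|^{2s}$ to reduce matters to a lower bound on the truncated second moment $\int_{|\xi|^2\leq M}|\xi|^2|\hat h_n|^2\,d\xi$, and absorb the high-frequency tail via $\|\Delta h_n\|_{L^2}^2\lesssim\lambda_n^4$ (which follows from $\|Hh_n\|_{L^2}=\lambda_n^2$ together with the upper bound $\||x|^2 h_n\|_{L^2}\lesssim\lambda_n^2$ already established). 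Choosing $M\sim\lambda_n^2$ large enough that the tail absorbs at most half of $\lambda_n^2/2$ completes the estimate, and the argument for $\langle x\rangle^s h_n$ is identical with positions and frequencies interchanged.

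The main obstacle is the subrange $s\in(0,1)$, where Jensen fails in the useful direction and one must compensate with the truncation/second-moment trick above. An alternative that handles all $s>0$ simultaneously---and which reflects the semiclassical flavour of the original Burq--Poiret--Thomann argument---is to rescale $h_n$ by $\lambda_n$ and exploit the fact that the associated semiclassical defect measure is supported on the phase-space shell $\{|\xi|^2+|x|^2=1\}$, making the Weyl quantization of any symbol positive on that shell bounded below and yielding the two lower bounds uniformly in $s$.
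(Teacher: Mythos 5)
Your proof is correct, and it takes a genuinely different route from the paper. The paper does not give a self-contained argument for this proposition: it cites \cite[Proposition 5.1]{BPT}, whose proof runs through functional calculus and semiclassical defect measures (precisely the alternative you sketch at the end), and then appends a remark outlining a commutator-based derivation for $s\in\N$ only, via $HP(\alpha)h_n=\lambda_n^2P(\alpha)h_n+[H,P(\alpha)]h_n$ together with Lemma \ref{lem-Bernstein}. Your main line of argument is both more elementary and more complete: the virial identity $\langle[H,x\cdot\nabla]h_n,h_n\rangle=0$ yields the exact equipartition $\|\nabla h_n\|_{L^2}^2=\||x|h_n\|_{L^2}^2=\lambda_n^2/2$, Jensen's inequality then handles all $s\ge1$ with no further input, and for $0<s<1$ the truncation-plus-second-moment trick closes the gap --- bounding the tail $\int_{|\xi|^2>M}|\xi|^2|\widehat{h_n}|^2\,d\xi$ by $M^{-1}\|\Delta h_n\|_{L^2}^2\lesssim M^{-1}\lambda_n^4$ and taking $M$ a sufficiently large multiple of $\lambda_n^2$ so that the truncated second moment stays $\gtrsim\lambda_n^2$. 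What your version buys is a self-contained proof for all real $s>0$ with explicit constants and no semiclassical machinery; what the defect-measure route buys is that, as a by-product, it localizes $|\widehat{h_n}|^2\,d\xi$ and $|h_n|^2\,dx$ to the shell $|x|^2+|\xi|^2\sim\lambda_n^2$, which \cite{BPT} exploit elsewhere. One minor point: the proposition as printed carries the weight $\langle x\rangle^{s/2}$, which is almost certainly a misprint for $\langle x\rangle^{s}$ (to match $(-\Delta)^{s/2}=|\nabla|^s$ and the equivalence \eqref{equivalence}); you correctly targeted the $\langle x\rangle^s$ version.
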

\begin{remark}
	It is clear that
	\begin{align*}
		&\qquad\lambda_n^2\Vt P(\alpha)h_n\Vt_{\LpnRd{2}} - \big\Vt [H,P(\alpha)]h_n\big\Vt_{\LpnRd{2}}\\&\le
		\Vt H P(\alpha)h_n\Vt_{\LpnRd{2}}
		\le \lambda_n^2\Vt P(\alpha)h_n\Vt_{\LpnRd{2}} + \big\Vt [H,P(\alpha)]h_n\big\Vt_{\LpnRd{2}},
	\end{align*}
	and note that $[H,P(\alpha)]$ has the same order with $P(\alpha)$. Then, using Lemma \ref{lem-Bernstein} we can get Proposition \ref{Prop-pre-BPT} for $s \in \N$.
\end{remark}

We can also define the Bourgain space associated with harmonic oscillator $H$.

\begin{definition}[Bourgain space]
We define the space $X_{H}^{s,b}(\Bbb{R}\times\Bbb{R}^d)$ as the closure of $C_0^\infty(\Bbb{R}\times\Bbb{R}^d)$ for the norm
\begin{align*}
\Vert u\Vert_{X_{H}^{s,b}(\Bbb{R}\times\Bbb{R}^d)}^2=\sum_{n\in\Bbb{N}}\big\Vert\lambda_n^s\widehat{\pi_nu}(t)\langle \tau+\lambda_n^2\rangle^b\big\Vert_{L_{\tau,x}^2}^2,
\end{align*}
where $\lambda_n^2$ denotes the $n$-th eigenvalue of $H$ and  $\widehat{\pi_nu}(t)$ denotes the Fourier transform w.r.t. time variable and $\pi_nu=\langle u,h_n\rangle h_n$ the spectral projector associated to $H$. 
		
For $T>0$, we can define the restricted Bourgain space equipped with the norm
\begin{align*}
\Vert u\Vert_{X_{H}^{s,b}([-T,T]\times \R^d)}=\inf\{\Vert w\Vert_{X_{H}^{s,b}(\Bbb{R}\times \R^d)},\hspace{1ex}w|_{[-T,T]\times \R^d}=u\}.
\end{align*}
\end{definition}

\begin{lemma}[Some basic properties of $X^{s,b}$]\label{lem:SobXTl4l2}
\begin{enumerate}
\item For $s\in\Bbb{R}$ and $b>\frac{1}{2}$, we have the Sobolev embedding $X^{s,b}(\Bbb{R}\times \R^d)\hookrightarrow C(\Bbb{R},\mathcal{H}^s(\R^d))$.
\item For $s'\leqslant s$ and $b'\leqslant b$, we have the including property
\begin{align*}
X_{T}^{s,b}\subset X_{T}^{s',b'}.
\end{align*}
\item We also need the following embedding property: $X_H^{0,\frac{1}{4}+}\hookrightarrow L_t^4L_x^2$.
\end{enumerate}
\end{lemma}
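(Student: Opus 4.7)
The plan is to prove the three parts in order, since each uses a different tool and (1) feeds into (3).

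For part (1), the strategy is to reduce the $X^{s,b}$-norm to a weighted sum of one-dimensional $H^b$-norms in time. Expand $u(t,x) = \sum_{n} c_n(t) h_n(x)$ with $c_n(t) = \langle u(t), h_n\rangle$ and set $d_n(t) := e^{i\lambda_n^2 t} c_n(t)$. Since the time-Fourier transform satisfies $\widehat{c_n}(\tau) = \widehat{d_n}(\tau + \lambda_n^2)$, an immediate change of variable gives
\begin{equation*}
\|u\|_{X^{s,b}}^2 \;=\; \sum_{n\in\N} \lambda_n^{2s}\,\|d_n\|_{H^b(\R)}^2.
\end{equation*}
For $b > 1/2$ the one-dimensional Sobolev embedding $H^b(\R)\hookrightarrow C_b^0(\R)$ yields $|c_n(t)| = |d_n(t)| \lesssim \|d_n\|_{H^b}$ uniformly in $t$, so summing with weight $\lambda_n^{2s}$ gives $\sup_t \|u(t)\|_{\mathcal{H}^s}^2 \lesssim \|u\|_{X^{s,b}}^2$. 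Continuity in $t$ with values in $\mathcal{H}^s$ then follows from the continuity of each $d_n$ together with dominated convergence in the $\ell^2(\lambda_n^{2s})$ sum.

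For part (2), the inclusion is immediate from the definition. Since $\lambda_n^2 = 2|n|+d \geq d \geq 1$ one has $\lambda_n^{s'} \leq \lambda_n^{s}$ whenever $s' \leq s$, and $\langle\tau+\lambda_n^2\rangle^{b'}\leq \langle\tau+\lambda_n^2\rangle^{b}$ whenever $b' \leq b$; passing to the infimum over extensions gives the restricted version.

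For part (3), the idea is to interpolate (1) with the trivial identity $X^{0,0} = L^2_{t,x}$. Specializing (1) to $s=0$ produces $X^{0,1/2+}\hookrightarrow L^\infty_t L^2_x$, while Plancherel in time yields $\|u\|_{L^2_t L^2_x} = \|u\|_{X^{0,0}}$. Complex interpolation at parameter $\theta=1/2$ between these two endpoints produces
\begin{equation*}
X^{0,\frac{1}{4}+} \hookrightarrow L^4_t L^2_x,
\end{equation*}
which is exactly (3). The sharp value $1/4+$ is precisely half of the $1/2+$ needed for the $L^\infty_t$ endpoint.

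The only non-trivial step is the identification in part (1) of the $X^{s,b}$-norm as a weighted sum of $H^b$-norms of the oscillation-removed coefficients $d_n$; once that is in place, parts (2) and (3) follow essentially by inspection and interpolation, respectively. No obstacle of substance is anticipated, as the harmonic-oscillator setting behaves like the standard $X^{s,b}$-theory once the spectral decomposition in $h_n$ replaces the Fourier transform in $x$.
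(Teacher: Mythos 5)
Your proof is correct, and it is the standard argument; the paper does not supply a proof for this lemma. The reduction in part (1) of the $X^{s,b}$-norm to $\sum_n \lambda_n^{2s}\|d_n\|_{H^b(\R)}^2$ via the oscillation-removed coefficients $d_n(t)=e^{i\lambda_n^2 t}c_n(t)$ is exactly right, and the $C^0_t$ conclusion plus dominated convergence handles continuity. Part (2) is indeed immediate from $\lambda_n\geq 1$ and $\langle\cdot\rangle\geq 1$. For part (3), your interpolation between $X^{0,1/2+}\hookrightarrow L^\infty_t L^2_x$ and $X^{0,0}=L^2_{t,x}$ is valid; a slightly more elementary route worth knowing is to bypass interpolation entirely by applying Minkowski's inequality ($\|u\|_{L^4_tL^2_x}^2 \leq \sum_n\|c_n\|_{L^4_t}^2$) followed by the one-dimensional Sobolev embedding $H^{1/4+}(\R)\hookrightarrow L^4(\R)$ fiberwise in $n$, which reuses the same identity $\|u\|_{X^{0,b}}^2=\sum_n\|d_n\|_{H^b}^2$ from part (1). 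Both yield the stated embedding.
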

	
We also have the following embedding theorem. 

\begin{theorem}[Strichartz estimates,\cite{BPT}]
For any $b>\frac12$, there exists a constant $C>0$ such that for any  $s\in\R$ and $(q,r)\in\R^2$ satisfying
\begin{align*}
(q,r)\in[2,\infty)^2,\quad\tfrac{2}{q}=d\big(\tfrac{1}{2}-\tfrac{2}{r}\big),
\end{align*}
then we have
\begin{align}\label{equ:Striclassical}
\|e^{itH}f\|_{L_t^q([0,1], L_x^r(\R^d))}\lesssim&\|f\|_{L_x^2(\R^d)},\\\label{equ:StriBorgain}
\|u\|_{L_t^q\mathcal{W}^{s,r}(\R\times\R^d)}\lesssim& \|u\|_{X^{s,b}(\R\times\R^d)}.
\end{align}
\end{theorem}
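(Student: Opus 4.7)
The estimate \eqref{equ:Striclassical} is the homogeneous Strichartz estimate for $e^{itH}$ on the short time window $[0,1]$, and \eqref{equ:StriBorgain} is the standard transfer principle from $X^{s,b}$ with $b>1/2$. The plan is to derive \eqref{equ:Striclassical} from a dispersive bound for $e^{itH}$ on a small interval and the abstract framework of Keel--Tao, and then deduce \eqref{equ:StriBorgain} by Fourier inversion in time.

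First I would establish a short-time dispersive estimate for $e^{itH}$. Mehler's formula gives, for $t\in(0,\pi/4)$,
\begin{equation*}
e^{-itH}f(x) = \frac{1}{(2\pi i \sin 2t)^{d/2}}\int_{\R^d} e^{\frac{i}{2\sin 2t}\bigl((|x|^2+|y|^2)\cos 2t - 2x\cdot y\bigr)} f(y)\,dy,
\end{equation*}
which yields the pointwise kernel bound and hence
\begin{equation*}
\bigl\|e^{itH}f\bigr\|_{L^\infty_x(\R^d)} \lesssim |\sin 2t|^{-d/2}\|f\|_{L^1_x(\R^d)} \lesssim |t|^{-d/2}\|f\|_{L^1_x(\R^d)}, \qquad |t|\le \tfrac{\pi}{8}.
\end{equation*}
Together with the obvious unitarity $\|e^{itH}f\|_{L^2_x} = \|f\|_{L^2_x}$, the Keel--Tao abstract $TT^*$ argument applied to $U(t)=e^{itH}$ on the interval $|t|\le \pi/8$ produces \eqref{equ:Striclassical} on that short interval for every admissible pair $(q,r)$. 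Since $[0,1]$ is covered by a bounded number of such intervals and $L^q_t$ is additive under partition, the estimate on $[0,1]$ follows by summing finitely many pieces.

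Next, for \eqref{equ:StriBorgain} I would use the transfer principle. Write $u(t,x)=\int_\R e^{it\tau}\widehat u(\tau,x)\,d\tau$ and, for each $\tau$, decompose $\widehat u(\tau,x)=\sum_n g_n(\tau) h_n(x)$ in the Hermite basis. Setting $f_\tau(x) = \sum_n e^{i\tau \lambda_n^2 t_0} g_n(\tau) h_n(x)$ (evaluated at any fixed $t_0$), one can write
\begin{equation*}
u(t,x) = \int_\R e^{it\tau}\sum_n g_n(\tau)h_n(x)\,d\tau
       = \int_\R e^{-it\lambda^2}\bigl(\text{appropriate }e^{itH}\text{-propagated data}\bigr)\,d\tau,
\end{equation*}
so that by Minkowski's inequality in $\tau$ followed by the homogeneous estimate \eqref{equ:Striclassical}, and Cauchy--Schwarz in $\tau$ using $b>1/2$ to absorb the weight $\langle\tau+\lambda_n^2\rangle^{-b}$, one obtains
\begin{equation*}
\|u\|_{L^q_t\mathcal W^{s,r}} \lesssim \int_\R \|f_\tau\|_{\mathcal H^s}\,d\tau \lesssim \Bigl(\int_\R\langle\tau+\lambda_n^2\rangle^{-2b}d\tau\Bigr)^{1/2}\|u\|_{X^{s,b}} \lesssim \|u\|_{X^{s,b}}.
\end{equation*}

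The only delicate point will be the short-time restriction imposed by the Mehler representation: the kernel degenerates as $\sin 2t \to 0$, so the dispersive bound is only valid locally in time, not globally. For the fixed interval $[0,1]$ this is harmless (finite covering), but globally in $t$ one cannot avoid the $L^q_t$ being taken over a finite interval without losing dispersion. In the transfer step, verifying that the Hermite expansion interacts correctly with the time-Fourier localization, so that each slice $\tau=\mathrm{const}$ really is of the form $e^{itH}$ applied to an $\mathcal H^s$ datum up to a phase, is the main bookkeeping to carry out carefully; once that is done, \eqref{equ:StriBorgain} follows by routine Minkowski/Cauchy--Schwarz manipulations.
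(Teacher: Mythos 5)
The paper does not prove this theorem; it cites it directly from Burq--Poiret--Thomann \cite{BPT}, so there is no internal proof to compare against. Your proposal follows the standard route one would expect: a short-time $|t|^{-d/2}$ dispersive bound from Mehler's kernel formula, fed into the Keel--Tao $TT^*$ framework to obtain \eqref{equ:Striclassical} on a short interval, followed by a finite covering of $[0,1]$, and then \eqref{equ:StriBorgain} by the usual transfer principle (time-Fourier decomposition, Minkowski's inequality, Cauchy--Schwarz in $\tau$ using $b>\tfrac12$). This is the same strategy used in \cite{BPT} and throughout the literature, and the overall plan is correct.

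Two remarks. First, in your transfer step the object $f_\tau(x)=\sum_n e^{i\tau\lambda_n^2 t_0}g_n(\tau)h_n(x)$ ``evaluated at any fixed $t_0$'' does not parse: the phase must involve the running time variable, not a frozen $t_0$. The clean version: with $c_n(t)=\langle u(t,\cdot),h_n\rangle$ and $\hat c_n$ its time Fourier transform, set $g_\tau=\sum_n\hat c_n(\tau-\lambda_n^2)h_n$; then $u(t)=\int_\R e^{it\tau}e^{-itH}g_\tau\,d\tau$, Minkowski and \eqref{equ:Striclassical} give $\|u\|_{L^q_t\mathcal W^{s,r}}\lesssim\int_\R\|g_\tau\|_{\mathcal H^s}\,d\tau$, and Cauchy--Schwarz with weight $\langle\tau\rangle^{2b}$ closes because $\int_\R\langle\tau\rangle^{2b}\|g_\tau\|_{\mathcal H^s}^2\,d\tau=\|u\|_{X^{s,b}}^2$. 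Second, you should flag (as you almost do) that \eqref{equ:StriBorgain} cannot hold with $L^q_t$ taken literally over all of $\R$ when $q<\infty$, since $e^{itH}$ is time-periodic; the estimate is to be read with a time cutoff, i.e.\ on a fixed bounded time interval, which is all the paper ever uses.
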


Now we state the useful inequalities about the homogeneous and inhomogeneous estimates in Bourgain spaces.

\begin{lemma}[Homogeneous estimate]
Let $s,b>0$, $\psi\in C_0^\infty(\Bbb{R})$ and $f\in X_H^{s,b}(\Bbb{R}\times\Bbb{R}^d)$, then there exists a constant $C>0$ such that 
\begin{align*}
\Vert \psi(t)u\Vert_{X_H^{s,b}(\Bbb{R}\times \Bbb{R}^d) }\leqslant C\Vert u\Vert_{X_H^{s,b}(\Bbb{R}\times\Bbb{R}^d)}.
\end{align*}
\end{lemma}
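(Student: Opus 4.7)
The plan is to reduce the statement to a purely temporal weighted-$L^2$ estimate by exploiting the fact that the spectral projectors $\pi_n$ act only in the spatial variable and hence commute with multiplication by $\psi(t)$. First I would note that
\begin{equation*}
\pi_n\bigl(\psi(t)u\bigr) = \psi(t)\pi_n u,\qquad
\widehat{\pi_n(\psi u)}(\tau) = \bigl(\widehat{\psi}\ast \widehat{\pi_n u}\bigr)(\tau),
\end{equation*}
where the hat denotes the Fourier transform in time. Substituting into the definition of the $X_H^{s,b}$-norm, the claim reduces to proving, uniformly in $n$, the one-dimensional estimate
\begin{equation*}
\bigl\|\langle \tau+\lambda_n^2\rangle^b \bigl(\widehat{\psi}\ast \widehat{\pi_n u}\bigr)(\tau)\bigr\|_{L^2_\tau} \leq C\bigl\|\langle \tau+\lambda_n^2\rangle^b \widehat{\pi_n u}(\tau)\bigr\|_{L^2_\tau},
\end{equation*}
and then summing the squares weighted by $\lambda_n^{2s}$.

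To prove this temporal estimate, I would translate and set $F_n(\tau):=\langle\tau+\lambda_n^2\rangle^b\widehat{\pi_n u}(\tau)$, so that $\widehat{\pi_n u}=\langle\cdot+\lambda_n^2\rangle^{-b}F_n$. The key ingredient is the elementary weight inequality
\begin{equation*}
\langle \tau+\lambda_n^2\rangle^b \leq C_b\bigl(\langle\tau-\sigma\rangle^b+\langle\sigma+\lambda_n^2\rangle^b\bigr),
\end{equation*}
valid for all $b\geq 0$ with a constant independent of $n$. Splitting the convolution accordingly yields two pieces: one of the form $\bigl(\langle\cdot\rangle^b|\widehat{\psi}|\bigr)\ast(\langle\cdot+\lambda_n^2\rangle^{-b}|F_n|)$ and one of the form $|\widehat\psi|\ast|F_n|$. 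Since $\psi\in C_0^\infty(\R)$ implies $\widehat{\psi}\in\mathcal{S}(\R)$, both $\widehat{\psi}$ and $\langle\cdot\rangle^b\widehat{\psi}$ lie in $L^1(\R)$; Young's convolution inequality $L^1\ast L^2\subset L^2$ therefore controls each piece by $\|F_n\|_{L^2_\tau}$, with a constant depending only on $b$ and finitely many Schwartz seminorms of $\psi$.

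Squaring, multiplying by $\lambda_n^{2s}$, and summing in $n\in\N$ then gives
\begin{equation*}
\|\psi(t)u\|^2_{X_H^{s,b}} = \sum_{n}\lambda_n^{2s}\bigl\|\langle\tau+\lambda_n^2\rangle^b\widehat{\pi_n(\psi u)}\bigr\|^2_{L^2_\tau} \leq C^2\sum_{n}\lambda_n^{2s}\|F_n\|^2_{L^2_\tau} = C^2\|u\|^2_{X_H^{s,b}},
\end{equation*}
which is the desired inequality. I do not expect any serious obstacle here: the only mildly delicate point is ensuring the weight-splitting constant is independent of $n$, which is built into the triangle-type inequality above, and the rest is a routine application of Young's inequality to a rapidly decaying kernel. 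The argument is the standard $X^{s,b}$ cutoff argument adapted to the harmonic-oscillator spectral decomposition, and it does not use any $b<1/2$ assumption.
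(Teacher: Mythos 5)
The paper states this lemma without proof, as it is a standard property of Bourgain spaces. Your argument is correct and is precisely the standard proof: commuting the spatial spectral projector past the temporal cutoff to reduce to a one-dimensional weighted convolution estimate, splitting the weight $\langle\tau+\lambda_n^2\rangle^b$ via the $n$-uniform triangle-type inequality, and closing with Young's inequality $L^1\ast L^2\subset L^2$ using the rapid decay of $\widehat{\psi}$. The one thing worth stating explicitly (you use it implicitly) is that $\Vert \langle\cdot+\lambda_n^2\rangle^{-b}F_n\Vert_{L^2}\le\Vert F_n\Vert_{L^2}$ since $b>0$, which is what makes the first piece close uniformly in $n$.
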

	
\begin{lemma}[Inhomogeneous estimate]
Let $\frac{1}{4}<b'<\frac{1}{2}$ and $0<b<1-b'$. Then for all $F\in X_T^{s,-b'}([-T,T]\times \R^d)$, then we have
\begin{align*}
\bigg\Vert \int_{0}^{t}\psi(t/T)e^{i(t-s)H}F(s)ds\bigg\Vert_{X_T^{s,b}([-T,T]\times \Bbb{R}^d)}\leqslant CT^{1-b-b'}\Vert F\Vert_{X_T^{s,-b'}([-T,T]\times \Bbb{R}^d)}.
\end{align*}
\end{lemma}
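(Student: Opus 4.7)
The plan is a standard two-step reduction for inhomogeneous estimates in Bourgain spaces, adapted to the Hermite setting. First, I would decompose along the eigenbasis $\{h_n\}$ of $H$: writing $F(t,x)=\sum_n f_n(t)h_n(x)$ with $f_n(t)=\langle F(t,\cdot),h_n\rangle_{L_x^2}$, the Duhamel integrand factors as $e^{-i(t-s)\lambda_n^2}f_n(s)h_n(x)$, and $L_x^2$-orthogonality reduces the $X_H^{s,b}$ estimate to the family of scalar bounds, one for each $n\in\N$,
\begin{equation*}
\int_\R \langle\tau+\lambda_n^2\rangle^{2b}\bigl|\widehat{G_n}(\tau)\bigr|^2 \,d\tau \leq C T^{2(1-b-b')} \int_\R \langle\tau+\lambda_n^2\rangle^{-2b'}\bigl|\widehat{f_n}(\tau)\bigr|^2 \,d\tau,
\end{equation*}
where $G_n(t)=\psi(t/T)\int_0^t e^{-i(t-s)\lambda_n^2}f_n(s)\,ds$. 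The conjugation $g_n(t):=e^{it\lambda_n^2}f_n(t)$ eliminates the oscillation and converts the weight $\langle\tau+\lambda_n^2\rangle$ into $\langle\tau\rangle$, so the problem reduces, uniformly in $n$, to the scalar-in-time inequality
\begin{equation*}
\left\Vert \psi(t/T)\int_0^t g(s)\,ds \right\Vert_{H^b_t(\R)} \lesssim T^{1-b-b'} \Vert g\Vert_{H^{-b'}_t(\R)}.
\end{equation*}

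Second, I would prove this one-dimensional estimate via a high/low frequency split of $g$ at the scale $|\tau|\sim 1/T$. Write $g=g_{\mathrm{lo}}+g_{\mathrm{hi}}$ with $\widehat{g_{\mathrm{lo}}}$ supported in $\{|\tau|\leq 1/T\}$. For the high-frequency piece, use the identity $\int_0^t g_{\mathrm{hi}}(s)\,ds = \int (e^{it\tau}-1)(i\tau)^{-1}\widehat{g_{\mathrm{hi}}}(\tau)\,d\tau$; the multiplier $1/|\tau|$ is harmless on the support, and after rescaling $t\mapsto Tt$ the cutoff contributes $T^{1/2-b}$ to the $H^b_t$-norm, so pairing against $\langle\tau\rangle^{-b'}$ and summing in $\tau$ yields $T^{1-b-b'}$ precisely because $b+b'<1$. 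For the low-frequency piece, Taylor-expand $e^{it\tau}-1$ in powers of $t\tau$, so that the primitive becomes an absolutely convergent series
\begin{equation*}
\sum_{k\geq 0}\frac{(it)^{k+1}}{(k+1)!}\int_\R (i\tau)^k \widehat{g_{\mathrm{lo}}}(\tau)\,d\tau;
\end{equation*}
each term is controlled by combining $\Vert \psi(t/T)t^{k+1}\Vert_{H^b_t}\lesssim T^{k+3/2-b}$ with the support constraint $|\tau|\lesssim 1/T$ and the weight $\langle\tau\rangle^{-b'}$, again summing to the correct power of $T$.

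The main technical obstacle is the bookkeeping near the transition scale $|\tau|\sim 1/T$, where one must verify that neither the high- nor the low-frequency piece produces a logarithmic loss. The strict inequality $b'<\tfrac12$ makes $\langle\tau\rangle^{-b'}$ locally square-integrable and ensures the low-frequency Taylor series converges absolutely, while $b+b'<1$ controls the geometric-type sum arising from the high-frequency piece. The lower bound $b'>\tfrac14$ plays no role in this lemma itself; it appears only when the estimate is later combined with the embedding $X_H^{0,\frac14+}\hookrightarrow L_t^4L_x^2$ from Lemma \ref{lem:SobXTl4l2}. Finally, since $\psi(t/T)$ is supported on $|t|\lesssim T$ and the $X^{s,b}_T$ norm is defined as an infimum over extensions, passing from the scalar estimate back to the restricted Bourgain norm is routine.
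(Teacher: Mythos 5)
Your proposal is correct and follows the standard two-step argument for the inhomogeneous Bourgain-space estimate; the paper states the lemma without proof, treating it as classical (cf. Ginibre or Tao's textbook), so there is no distinct "paper proof" to compare against. The reduction via the eigenbasis of $H$ together with the conjugation $g_n=e^{\pm it\lambda_n^2}f_n$ to trade $\langle\tau+\lambda_n^2\rangle$ for $\langle\tau\rangle$, followed by the high/low split at $|\tau|\sim 1/T$ with Taylor expansion on the low piece, is exactly the right argument, and you correctly identify the roles of $b+b'<1$, $b'<\tfrac12$, and the irrelevance of $b'>\tfrac14$.

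One small point that would need to be spelled out in a full write-up: the high-frequency contribution $\int_{|\tau|>1/T}\frac{e^{it\tau}-1}{i\tau}\widehat{g}(\tau)\,d\tau$ actually splits into two sub-pieces with different treatments. The $e^{it\tau}$ piece is handled by the pointwise bound $\langle\tau\rangle^{b+b'}|\tau|^{-1}\lesssim T^{1-b-b'}$ on $|\tau|>1/T$ (here $b+b'<1$ is used) combined with the uniform boundedness of multiplication by $\psi(t/T)$ on $H^b_t$ for $0\le b\le 1$, while the $-1$ piece produces $\psi(t/T)$ times a constant, bounded via Cauchy--Schwarz as $\|\psi(t/T)\|_{H^b}\cdot\||\tau|^{-1}\langle\tau\rangle^{b'}\|_{L^2(|\tau|>1/T)}\|g\|_{H^{-b'}}\lesssim T^{1/2-b}\cdot T^{1/2-b'}\|g\|_{H^{-b'}}$ (here $2b'<1$ is used). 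Your sketch conflates these two, but the outcome is the same and the structure is sound.
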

	
\subsection{Almost orthogonal property for the eigenfunctions associated to $H$ and a spectral multiplier theorem}
		
In \cite{Burq2,Burq3},  Burq,  G\'{e}rard and Tzvetkov showed the almost orthogonality of the spectral projections associated to Laplace-Beltrami operator on compact manifold without boundary. Later, Akahori \cite{Akahori} gave the construction of the parametrix associated to the spectral projector associated to $H$ and then use this construction to show  the almost orthogonality of the interaction of eigenfunctions. 
		
\begin{theorem}[Almost orthogonal property for eigenfunctions, \cite{Akahori}]\label{thm-ortho}
Let $d\geq2$, $m\in\N$ and $C_0>0$ large enough, then for any $\ell>0$, there exists $C_\ell>0$ such that for $\lambda_{k_0}\geq C_0\lambda_{k_j}$ with $j\in\{1,\cdots, m\}$, we have
\begin{align}\label{fml-ortho}
\Big|\int_{\R^d}\prod_{j=0}^{m}\pi_{\lambda_{k_j}}u_j(x)\,dx\Big|\leqslant C_\ell\lambda_{k_0}^{-\ell}\prod_{j=0}^{m}\|u_j\|_{L^2(\R^d)}.
\end{align}
			
\end{theorem}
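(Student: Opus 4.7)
The strategy is iterated integration by parts exploiting the self-adjointness of $H$ and the fact that $v_0 := \pi_{\lambda_{k_0}} u_0$ is an eigenfunction with $Hv_0 = \lambda_{k_0}^2 v_0$. Writing $v_j := \pi_{\lambda_{k_j}} u_j$ and $V := \prod_{j=1}^m v_j$, for every positive integer $N$ the plan is to transfer $H$ across the pairing, giving
\begin{equation*}
\lambda_{k_0}^{2N}\int_{\R^d} v_0 V\,dx = \int_{\R^d}\big(H^N v_0\big)V\,dx = \int_{\R^d} v_0\cdot \big(H^N V\big)\,dx,
\end{equation*}
so that by Cauchy--Schwarz, $\big|\int v_0 V\,dx\big|\le \lambda_{k_0}^{-2N}\|u_0\|_{L^2}\|H^N V\|_{L^2}$.

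The heart of the argument is then to control $\|H^N V\|_{L^2}$. I would use the generalized Leibniz rule for $H=-\Delta+|x|^2$: each application of $H$ to an $m$-fold product yields at most $O(m^2)$ terms (direct hits plus the cross terms $\nabla v_i\cdot\nabla v_j$), so after $N$ iterations one obtains
\begin{equation*}
H^N V=\sum_{\vec\alpha} c_{\vec\alpha}\prod_{j=1}^m P(\alpha_j) v_j,\qquad \sum_{j=1}^m \mathrm{Ord}(\alpha_j)=2N,
\end{equation*}
with at most $(Cm^2)^N$ summands and $P(\alpha_j)$ as in Definition \ref{def-pre-Palp}. Estimating each summand via H\"older's inequality (with exponents satisfying $\sum 1/q_j=1/2$), the Bernstein-type inequality of Lemma \ref{lem-Bernstein} applied to each eigenfunction $v_j$, and the standard Bernstein embedding $L^2\to L^{q_j}$, each factor contributes $\lambda_{k_j}^{\mathrm{Ord}(\alpha_j)+d(1/2-1/q_j)}\|u_j\|_{L^2}$. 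Summing the exponents and using $\lambda_{k_j}\le \lambda_{k_0}/C_0$ for all $j\ge1$, this yields
\begin{equation*}
\|H^N V\|_{L^2}\lesssim (Cm^2)^N C_0^{-2N-d(m-1)/2}\,\lambda_{k_0}^{2N+d(m-1)/2}\prod_{j=1}^m\|u_j\|_{L^2}.
\end{equation*}

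Combining the two estimates produces
\begin{equation*}
\Big|\int_{\R^d}v_0 V\,dx\Big|\lesssim \Big(\frac{Cm^2}{C_0^2}\Big)^N \lambda_{k_0}^{d(m-1)/2}\prod_{j=0}^m\|u_j\|_{L^2}.
\end{equation*}
I would then fix $C_0$ large enough that $Cm^2/C_0^2\le 1/2$, and for any prescribed $\ell>0$ choose $N\asymp (\ell+d(m-1)/2)\log_2\lambda_{k_0}$ so as to absorb the remaining factor $\lambda_{k_0}^{d(m-1)/2}$. This yields the desired bound $\big|\int v_0 V\,dx\big|\le C_\ell \lambda_{k_0}^{-\ell}\prod_{j=0}^m\|u_j\|_{L^2}$. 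The main obstacle is ensuring the constants in Lemma \ref{lem-Bernstein} remain controlled as the order $2N$ becomes comparable to or larger than the natural threshold $\lambda_{k_j}^2$ of the low-frequency eigenfunctions; Akahori addresses precisely this point by constructing an explicit parametrix for the spectral projector of $H$, which rigorously justifies the iterated derivative estimates on eigenfunctions via pseudodifferential calculus.
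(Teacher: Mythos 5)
The paper does not prove this theorem; it cites Akahori--Ito \cite{Akahori}. So there is no in-paper proof to compare against, and the question is whether your argument is correct on its own terms. The overall strategy — transfer $H^N$ across the pairing via self-adjointness, pay $\lambda_{k_0}^{-2N}$, expand $H^N V$ by Leibniz and estimate each summand — is a reasonable approach, and you have correctly identified the critical obstruction. However, as written the argument has a genuine gap at exactly that point, and your closing attribution of the fix to Akahori's parametrix is not accurate.

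The gap is the $k$-dependence of the constant in Lemma~\ref{lem-Bernstein}. That lemma gives $\|P(\alpha)u_N\|_{L^2}\lesssim N^k\|u_N\|_{L^2}$ with an implicit constant that, by the inductive proof, is allowed to depend on $k=\operatorname{Ord}(\alpha)$ in an uncontrolled way. Your choice $N\asymp \log\lambda_{k_0}$ makes $k=2N$ grow with $\lambda_{k_0}$, so the implicit constant enters the final bound and must be tracked. This is not a benign issue: for a low eigenfunction such as $h_0\sim e^{-|x|^2/2}$ with $\lambda_0=\sqrt d$, the Hermite orthogonality relations give $\|\partial^k h_0\|_{L^2}\sim 2^{k/2}\sqrt{k!}$, which is super-exponential in $k$, while $\lambda_0^k$ is only exponential. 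Thus the best constant $C(k)$ compatible with Lemma~\ref{lem-Bernstein} grows like $(k/e)^{k/2}$ in the worst case, which is $\exp\bigl(\tfrac{k}{2}\log k\bigr)$. Plugging $k=2N\asymp\log\lambda_{k_0}$, this factor is $\lambda_{k_0}^{O(\log\log\lambda_{k_0})}$, which cannot be beaten by choosing a fixed $C_0$; the quantity $(Cm^2/C_0^2)^N C(2N)$ actually grows with $N$ once $C_0$ is frozen, so your final step fails as stated.

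The gap is fillable, but it requires a sharper, explicitly $k$-tracked version of Lemma~\ref{lem-Bernstein}, not the lemma as stated. Using the creation/annihilation operators for $H$ one can show
\begin{equation*}
\|P(\alpha)h_n\|_{L^2}\le C^{k}\,\bigl(\lambda_n^2+k\bigr)^{k/2}\,\|h_n\|_{L^2},\qquad k=\operatorname{Ord}(\alpha),
\end{equation*}
with $C$ independent of $k$ and $n$. With this refined bound, the combined factor in your estimate becomes $C^{k_j}\bigl((\lambda_{k_j}^2+k_j)/\lambda_{k_0}^2\bigr)^{k_j/2}$; since $\lambda_{k_j}\le\lambda_{k_0}/C_0$ and $k_j\le 2N\lesssim\log\lambda_{k_0}\ll\lambda_{k_0}^2$, the ratio is $\le 2/C_0^2$ for $\lambda_{k_0}$ large, and the geometric decay in $N$ is restored. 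Your final choice of $N$ then does yield $\lambda_{k_0}^{-\ell}$. So the strategy is salvageable, but your proof as presented uses the wrong form of the Bernstein lemma and then defers the crucial constant management to a citation without supplying it. Finally, the claim that Akahori's parametrix ``rigorously justifies the iterated derivative estimates via pseudodifferential calculus'' misrepresents that reference: Akahori--Ito do not prove the result by closing your iteration; they construct a parametrix for the spectral projector and derive the almost-orthogonality from physical-space localization and oscillatory-integral (stationary phase) estimates — a genuinely different method.
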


Next, we will show the Coifman-Meyer type multiplier theorem  adapting to the harmonic oscillator $H$. First, we introduce the following $k$-linear operator
\begin{align}\label{fml-spectral-multip}
\Lambda(f_1,\cdots,f_k)=\sum_{n_1,\cdots,n_k}\tilde{m}(n_1,\cdots,n_k)\int_{\R^d}\pi_{n_1}f_1(x)\cdots \pi_{n_k}f_k(x)dx,
\end{align}
where $n_k$ is the eigenvalue associated to $\sqrt H$ and $\pi_{n_k}$ is the  projection operator on the corresponding eigen-space. Before stating the multiplier theorem, we introduce the property of modulation stability.
		
\begin{definition}[Modulation stability property]\label{Def:Modsp}
Let $Y$ be a Banach space. We say that $Y$ satisfies the modulation stability property if the following estimate  
\begin{align*}
\big\|\tilde{f}\big\|_{Y}\lesssim \|f\|_Y, \quad \forall\; f\in Y,
\end{align*}
holds, where $\tilde{f}=\sum_{n_i}e^{i\theta_in_i}\pi_{n_i}f$ is a frequency modulation of $f$.
\end{definition}

Now, we state the boundedness of the $k$-linear spectral multiplier.
		
\begin{theorem}[Spectral multiplier]\label{thm-multi}
Let $\Lambda$ be a $k$-linear spectral multiplier similar to \eqref{fml-spectral-multip} with the multiplier $\tilde{m}$. Suppose that the Banach space $Y$ satisfies the modulation stability property and $\tilde{m}$ obeys the following symbol-type estimate
\begin{align*}
\left|\partial_{\xi_1}^{\alpha_1}\cdots\partial_{\xi_k}^{\alpha_k}\tilde{m}(\xi_1,\cdots,\xi_k)\right|\lesssim \langle\xi_1\rangle^{-\alpha_1}\cdots\langle\xi_k\rangle^{-\alpha_k}.
\end{align*} 
Assume that $f_k$ satisfies the space-time multilinear estimate  
\begin{align*}
\left|\int_{0}^t\int_{\R^d}f_1(t,x)f_2(t,x)\cdots f_k(t,x)dxdt\right|\leqslant A\prod_{j=1}^{k}\|f_j\|_{Y},
\end{align*}
where $f_j$ is spectrally-localized function at scales $N_j$. Then, there exists a constant $C>0$ such that 
\begin{align*}
\left|\int_{0}^t\Lambda(f_1,\cdots,f_k)dt\right|\leqslant CA\prod_{j=1}^{k}\|f_j\|_{Y}.
\end{align*}
\end{theorem}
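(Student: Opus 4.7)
The plan is to follow the classical Coifman–Meyer strategy, adapted to the spectral setting: decompose the symbol $\tilde{m}$ into elementary pieces on which it becomes (up to harmless rescaling) a smooth function on a bounded box, then expand in a Fourier series so that each Fourier mode produces only a modulation of the inputs, which is controlled by the modulation stability hypothesis on $Y$.

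First, I would exploit the spectral localization of the $f_j$'s. Since $f_j$ is localized at scale $N_j$, only indices $n_j$ with $n_j \sim N_j$ contribute to $\Lambda(f_1,\dots,f_k)$, so one may freely multiply $\tilde m$ by a smooth cutoff $\prod_j \chi(n_j/N_j)$ supported in a dyadic annulus without changing the expression. Setting $\eta_j = n_j/N_j$ and defining $\tilde m_N(\eta_1,\dots,\eta_k) := \tilde{m}(N_1\eta_1,\dots,N_k\eta_k)\prod_j \chi(\eta_j)$, the symbol-type estimate translates into
\begin{equation*}
\big|\partial_{\eta_1}^{\alpha_1}\cdots\partial_{\eta_k}^{\alpha_k}\tilde m_N(\eta_1,\dots,\eta_k)\big| \lesssim_{\alpha} 1,
\end{equation*}
uniformly in the scales $N_j$, and $\tilde m_N$ is now a smooth, compactly supported function on a fixed box (say $[1/2,4]^k$). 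Expand it in a Fourier series on a slightly larger periodic cube:
\begin{equation*}
\tilde m_N(\eta_1,\dots,\eta_k) = \sum_{\ell \in \mathbb{Z}^k} c_\ell(N) \, e^{2\pi i \ell \cdot \eta},
\end{equation*}
with coefficients satisfying $|c_\ell(N)| \lesssim_M \prod_j \langle \ell_j \rangle^{-M}$ for any $M>0$, thanks to the uniform smoothness just established (integration by parts in each $\eta_j$).

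Inserting this expansion into \eqref{fml-spectral-multip} and exchanging the sums, each Fourier mode produces
\begin{equation*}
\int_{\R^d} \prod_{j=1}^k \Big( \sum_{n_j} e^{2\pi i \ell_j n_j / N_j}\, \pi_{n_j} f_j(x) \Big)\,dx = \int_{\R^d} \prod_{j=1}^k \tilde f_j^{(\ell)}(x)\,dx,
\end{equation*}
where $\tilde f_j^{(\ell)}$ is precisely a frequency modulation of $f_j$ in the sense of Definition \ref{Def:Modsp}. The modulation stability hypothesis then gives $\|\tilde f_j^{(\ell)}\|_Y \lesssim \|f_j\|_Y$ uniformly in $\ell$, and the assumed $k$-linear spacetime estimate yields
\begin{equation*}
\Big| \int_0^t \int_{\R^d} \prod_{j=1}^k \tilde f_j^{(\ell)}(t,x)\,dx\,dt \Big| \leq A \prod_{j=1}^k \|\tilde f_j^{(\ell)}\|_Y \lesssim A \prod_{j=1}^k \|f_j\|_Y.
\end{equation*}
Summing over $\ell \in \mathbb{Z}^k$ and using the rapid decay $|c_\ell(N)| \lesssim \prod_j \langle \ell_j\rangle^{-M}$ with $M > 1$ collapses the sum into a finite constant, producing the claimed bound $CA\prod_j \|f_j\|_Y$.

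The delicate point is the Fourier expansion step: one must verify that the constants $C_\alpha$ in the derivative bounds on $\tilde m_N$ are genuinely $N$-independent, and that modulation stability of $Y$ holds for the \emph{entire} family of modulations $\{e^{2\pi i \ell_j n_j/N_j}\}$ uniformly in $\ell_j$ and $N_j$ (this is what allows the Fourier-series strategy to work verbatim in the spectral setting, and is precisely why Definition \ref{Def:Modsp} is formulated for arbitrary phases $\theta_j$). Apart from this, everything is routine; in particular no use is made of any special structure of the eigenvalues of $\sqrt H$ beyond the spectral decomposition $f_j = \sum_{n_j}\pi_{n_j}f_j$.
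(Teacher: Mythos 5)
Your proposal is correct and follows essentially the same route as the paper's proof: rescale the symbol by $\eta_j = n_j/N_j$ to obtain a uniformly smooth compactly supported function, expand it in a Fourier series with rapidly decaying coefficients, note that each Fourier mode acts as a simultaneous frequency modulation of the inputs, and close the argument via the modulation stability hypothesis and the assumed $k$-linear spacetime estimate. You are somewhat more careful than the paper in spelling out the cutoff step, the quantitative decay $|c_\ell(N)|\lesssim\prod_j\langle\ell_j\rangle^{-M}$ that guarantees summability, and the need for uniformity of the modulation stability constant over the whole one-parameter family of phases — all points the paper handles more tersely — but the decomposition and the key ideas coincide.
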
 

\begin{remark}
For compact manifold, Hani \cite{Hani2} first established this spectral multiplier theorem. Our theorem is an analogue of Hani's theorem in the harmonic oscillator setting. For the sake of completeness, we also give the detailed proof here.
\end{remark}

\begin{proof}
Since $f_i$ is spectrally localized around $N_i$, i.e. $n_i\sim N_i$, we can find $\tilde{n}_i\in(0,2]$ such that $n_i=N_i\tilde{n}_i$. Similarly, there exists a smooth function $\varphi\in C_0^\infty([-4,4])$ such that
\begin{align*}
\varphi(\tilde{n}_1,\cdots,\tilde{n}_k)=\tilde m(N_1\tilde{n}_1,\cdots,N_k\tilde{n}_k).
\end{align*}
By the assumption on the boundedness for the multiplier, the derivatives of $\varphi$ is bounded. Using the Fourier expansion, we can write
\begin{align*}
\varphi(\tilde{n}_1,\cdots,\tilde{n}_k)=\sum_{\theta_i\in\Bbb Z/4}M(\theta_1,\cdots,\theta_k)e^{i(\theta_1\tilde{n}_1+\cdots+\theta_k\tilde{n}_k)}.
\end{align*}
Using the definition of $\Lambda$, we obtain
\begin{align*}
\Lambda(f_1,\cdots,f_k)&=\sum_{n_i\sim N_i}\sum_{\theta_i\in\Bbb Z/4}M(\theta_1,\cdots,\theta_k)e^{i(\theta_1n_1N_1^{-1}+\cdots +\theta_kn_kN_k^{-1})}\int_{\R^d}\pi_{n_1}f_1(t)\cdots \pi_{n_k}f_k(t)dx\\
&=\sum_{\theta_i\in\Bbb Z/4}M(\theta_1,\cdots,\theta_k)\int_{\R^d}\prod_{j=1}^{k}\Big(\sum_{n_j\sim N_j}e^{i\theta_jn_jN_j^{-1}}\pi_{n_j}f_j(t)\Big)dx\\
&=\sum_{\theta_j\sim \Bbb Z/4}M(\theta_1,\cdots,\theta_k)\int_{\R^d}\prod_{j=1}^{k}\tilde{f}_j^{\theta_j}(t,x)dx,
\end{align*}
where the modulated function $\tilde{f}_j^{\theta_j}:=\sum_{n_j\sim N_j}e^{i\theta_jn_jN_j^{-1}}\pi_{n_j}f_j(t)$. Using the multilinear estimate for $f_j$ and modulated stability property, we can complete the proof. 
\end{proof}

%%%%%%%%%%%%%%%%%%%%%%%%%%%%%%%%%%%%%%%%%%%%%%%%%%%%%%%%%%%%%%%%%%%%%%%

%%%%%%%%%%%%%%%%%%%%%%%%%%%%%%%%%%%%%%%%%%%%%%%%%%%%%%%%%%%%%%%%%%%%%%%

\section{Bilinear Strichartz estimate for harmonic oscillator}\label{Sec-bilinear}
In this section, we will prove the sharp bilinear Strichartz estimate for harmonic oscillator in the sense that there is no loss of derivatives similarly to that in Euclidean space. Let $H := -\Delta + \vt x\vt^2$, we consider time-space bilinear estimate for the following initial data problem 
\begin{equation}\label{fml-linear-HNLS}
\begin{cases}
(i\partial_t + H)u = 0,\,\, (t,x) \in \R\times\R^d\\
u(0) = u_0 \in C_0^\infty(\R^d).
\end{cases}
\end{equation}
		
In  \cite{BPT}, Burq,  Poiret,  and Thomann established the following bilinear Strichartz estimate with $\delta$-loss of derivatives
\begin{align*}
\big\|e^{itH}\Delta_Mue^{itH}\Delta_Nv\big\|_{L_{t,x}^2([0,T]\times\R^d)}\leq C_T\min\{N,M\}^{\frac{d-2}{2}}\Big(\frac{\min(N,M)}{\max(N,M)}\Big)^{\frac12-\delta}\|\Delta_Mu\|_{L^2(\R^d)}\|\Delta_Nv\|_{L^2(\R^d)}.
\end{align*}
		
Let us briefly review the strategy of \cite{BPT}. First, they use the following physical-space decomposition to $u_M:=\Delta_Mu,$ 
\begin{align*}
u_M=\chi\big(\tfrac{4|x|^2}{M^2}\big)u_M+(1-\chi)\big(\tfrac{4|x|^2}{M^2}\big)u_M,
\end{align*}
with $\chi\in C_c^\infty(\R),\;0\leq\chi\leq1,$ satisfying
\begin{equation*}
\chi(x)= \begin{cases}
1\quad \text{if}\quad |x|\leq\frac{15}{32},\\
0\quad \text{if}\quad |x|\geq\frac12.
\end{cases}
\end{equation*}
Substituting into the bilinear form of Schr\"odinger flow, it suffices to show the following two estimate
\begin{align}\label{burq-1}
\big\|e^{itH}v_Ne^{itH}\chi\big(\tfrac{|x|^2}{M^2}\big)u_M\big\|_{L_{t,x}^2}
\end{align}
and 
\begin{align}\label{burq-2}
\big\|e^{itH}v_Ne^{itH}(1-\chi)\big(\tfrac{|x|^2}{M^2}\big)u_M\big\|_{L_{t,x}^2}.
\end{align}
Notice that in \eqref{burq-2}, they  use the almost orthogonality of the support of two functions and obtain the fast decay of \eqref{burq-2}. For \eqref{burq-1}, they want to use the lens transform to transfer the estimate to $e^{it\Delta}$. This requires the use of bilinear Strichartz estimate for functions $f,g$ which is not spectrally localized. To sum over all frequencies, the $\delta$-loss appears.  
		
Our main result in this section is to remove the $\delta$-loss by the argument of \cite{PTV-RMI} and some technique computations.

\begin{theorem}\label{thm-bilinear}
Let $1 \le M \le N$ be dyadic numbers and $u_N(0),v_M(0) \in \LpnRd{2}$ be two frequency localized functions with 
\begin{equation*}
\Delta_N u_N(0) = u_N(0)\,\, ,\Delta_M v_M(0) = v_M(0).
\end{equation*}
Then, for $T \in (0,\infty)$, $d\ge 2$, there exists constant $C = C(T)$ depend on $T$ only such that
\begin{equation}\label{fml-bilinear-res}
\Vt e^{itH}u_N(0) \cdot e^{itH}v_M(0)\Vt_{\Tsnselfd{2}{2}{[0,T]}} \le C \frac{M^{\frac{d-1}2}}{N^\frac12} \Vt u_N(0)\Vt_{\LpnRd{2}} \Vt v_M(0)\Vt_{\LpnRd{2}}.
\end{equation}
\end{theorem}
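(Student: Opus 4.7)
The plan is to adapt the virial-based strategy of Planchon-Tzvetkov-Visciglia (developed for $d=2$) to all dimensions $d\ge 2$, replacing their symmetric $L^4_{t,x}$ Strichartz step—which is unavailable in $d\ge 3$—by a commutator-based argument that handles the asymmetric Hölder pair arising in higher dimensions.

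Writing $u_N(t):=e^{itH}u_N(0)$ and $v_M(t):=e^{itH}v_M(0)$, the first step is the virial identity with weight $|x-y|<M^{-1}$ as in \cite{PTV-RMI}, which produces
\[
\int_0^T\!\!\int_{\R^d}\int_{|x-y|<M^{-1}}\bigl|M u_N(x)\nabla_y\overline{v_M}(y)+\overline{v_M}(y)\nabla_x u_N(x)\bigr|^2\,dx\,dy\,dt\lesssim N\,\|u_N(0)\|_{L^2}^2\|v_M(0)\|_{L^2}^2.
\]
Combined with the pointwise estimate $|\phi(x)|^2\lesssim\lambda^2\!\int_{|x-y|<(4\lambda)^{-1}}(|\Delta\phi|^2+|\phi|^2)(y)\,dy$ of \cite{Planchon-Jussieu} at $\lambda\sim M$, this upgrades to $\|e^{itH}u_N(0)\cdot e^{itH}v_M(0)\|_{L^2_t\dot H^1}^2\lesssim M^{d-1}N\|u_N(0)\|_{L^2}^2\|v_M(0)\|_{L^2}^2$. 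The companion weighted bound $\||x|(u_N v_M)\|_{L^2_{t,x}}^2\lesssim M^{d-1}N\|u_N(0)\|_{L^2}^2\|v_M(0)\|_{L^2}^2$ needed to upgrade $\dot H^1$ to $\mathcal{H}^1$ (via Lemma \ref{lem-Equi-H-Del}) follows from Sobolev embedding and the standard Strichartz estimate \eqref{equ:Striclassical}, exactly as in $d=2$. Since $u_N v_M$ is essentially spectrally supported at scales $K\gtrsim N$ by the almost orthogonality of Theorem \ref{thm-ortho}, a Littlewood-Paley decomposition adapted to $H$ converts the resulting $L^2_t\mathcal{H}^1$ bound into \eqref{fml-bilinear-res} after dividing by $N^2$.

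The delicate step is controlling the RHS of the virial identity in $d\ge 3$. After the change of variables $z=y-x$ and Hölder, the natural product is $\|u_N\|_{L^{p(d)}_{t,x}}\|\nabla v_M(\cdot+z)\|_{L^{q(d)}_{t,x}}$ with $1/p(d)+1/q(d)=1/2$. For $d=2$, $p=q=4$ is admissible; for $d\ge 3$ no such diagonal choice exists, so I would take $q(d)>2(d+2)/d$ and dominate the $L^{q(d)}_x$ norm by Sobolev embedding into $\dot W^{s(d),r(d)}$, where $(q(d),r(d))$ is admissible. Interpolating so that $r(d)$ is an even positive integer and writing $|\nabla|^{r(d)}P(\theta)=P(\tilde\theta)$, the key observation is
\[
(i\partial_t+H)\bigl(P(\tilde\theta)\,e^{itH}v_M(0)\bigr)=[P(\tilde\theta),H]\,v_M(t),
\]
where $[P(\tilde\theta),H]$ is a linear combination of operators $P(\theta')$ of the same order $\mathrm{Ord}(\tilde\theta)$—one order is lost to the Lie bracket even though $H$ itself has order $2$. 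Lemma \ref{lem-Bernstein} together with the spectral localization of $v_M$ then yields $\|[P(\tilde\theta),H]v_M\|_{L^1_t L^2_x([0,T]\times\R^d)}\lesssim M^{|\tilde\theta|}\|v_M(0)\|_{L^2}$, and the inhomogeneous Strichartz inequality closes the loop with the sharp $M$-power.

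The main obstacle is precisely this commutator step: one must verify that $[P(\tilde\theta),H]$ genuinely preserves order (so the bound reads $M^{|\tilde\theta|}$ rather than $M^{|\tilde\theta|+1}$), and that the Sobolev-embedding detour needed to avoid the non-admissible exponent $q(d)$ does not consume extra derivatives beyond those recoverable from the spectral localization of $v_M$. Once this technical point is handled, the remaining Strichartz, weighted-localization, and Littlewood-Paley bookkeeping proceeds in parallel with the $d=2$ proof of \cite{PTV-RMI}, and produces the scaling $M^{(d-1)/2}N^{-1/2}$ without any $\varepsilon$-loss.
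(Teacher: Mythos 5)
Your proposal follows the same four-step skeleton as the paper — virial assertion, pointwise localization of the low-frequency factor, weighted $|x|$ estimate, and Littlewood--Paley bookkeeping — and correctly identifies the commutator trick $(i\partial_t+H)P(\tilde\theta)e^{itH}v_M(0)=[P(\tilde\theta),H]v_M$ with $[P(\tilde\theta),H]$ preserving order as the device that replaces the (unavailable in $d\ge 3$) symmetric $L^4_{t,x}$ Hölder pairing. The order-preservation claim is correct, since $[\nabla,H]=2x$ and $[x,H]=-2\nabla$ are first-order.

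There is, however, a genuine gap at the step you call ``this upgrades to'' the $L^2_t\dot H^1$ bound. You apply the pointwise estimate of Planchon with $-\Delta$, i.e.\ $|\phi(x)|^2\lesssim\lambda^{?}\int_{|x-y|\lesssim\lambda^{-1}}\bigl(|\Delta\phi|^2+|\phi|^2\bigr)\,dy$. Applied to $\phi=v_M$ and multiplied by $|\nabla u_N(x)|^2$, this produces the term $\Delta v_M(y)\,\nabla u_N(x)$ on the bad side, and this must be controlled by the virial-type estimate. But the virial identity requires \emph{both} factors to solve the free equation $(i\partial_t+H)w=0$, and $\Delta v_M(t)=\Delta e^{itH}v_M(0)\neq e^{itH}\bigl(\Delta v_M(0)\bigr)$ is not a free solution; neither is $|x|^2v_M$, so splitting $\Delta=H-|x|^2$ does not immediately help. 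The paper handles this by \emph{re-proving} the pointwise localization lemma with $H\phi$ in place of $\Delta\phi$ (Lemma~\ref{lem-point-wise}), a nontrivial integration-by-parts argument, precisely because $Hv_M(t)=e^{itH}\bigl(Hv_M(0)\bigr)$ is a free solution with spectrally localized data, so the virial assertion extends to it (estimate \eqref{fml-thm-asser-Hvm}). A direct Strichartz bound for $\Delta v_M\cdot\nabla u_N$ or $|x|^2v_M\cdot\nabla u_N$ is too lossy in $N$ to recover the sharp $MN$ scaling, so this modification is not cosmetic. A smaller imprecision: the $\||x|(u_Nv_M)\|_{L^2_{t,x}}$ bound does not go ``exactly as in $d=2$''; in $d\ge 3$ one again needs the Sobolev-embedding plus commutator detour to handle the non-admissible exponent (paper's Step~3), and the Littlewood--Paley bookkeeping in Step~4 is done via a Leibniz-type identity $S_N(v_Mu_N)=S_N(v_MN^{-2}H\tilde u_N)$ rather than via almost orthogonality, which would not apply uniformly when $M\sim N$.
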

		
\begin{remark}
For $d=2$, the proof for this theorem can be found in \cite{PTV-RMI}. For higher dimension, the proof is more complex and need more techniques. However, presenting such generality would muddy the exposition, we only focus on the proof for $d=3$, and other dimension can be obtained similarly.
\end{remark}

\begin{proof} 
			
By paraproduct, we will show below that  the estimate \eqref{fml-bilinear-res}  with $d=3$ is equivalent to 
\begin{equation}\label{fml-thm-L2H1-Har}
\Vt e^{itH}u_N(0) \cdot e^{itH}v_M(0)\Vt_{\TsnCHself{2}{1}{[0,T]}} \le C MN^\frac12 \Vt u_N(0)\Vt_{\LpnR{2}} \Vt v_M(0)\Vt_{\LpnR{2}}.
\end{equation}
Writing
\begin{equation*}
u_N := e^{itH}u_N(0),\; v_M := e^{itH}v_M(0),
\end{equation*}
and 	by the equivalence of Sobolev norm \eqref{equivalence}, we could obtain \eqref{fml-thm-L2H1-Har} provided that we show
\begin{equation}\label{fml-thm-L2x2}
\big\| |x|u_N v_M\big\|_{L_{t,x}^2([0,T]\times\R^3)}
\leq C_T MN^\frac12 \Vt u_N(0)\Vt_{\LpnR{2}} \Vt v_M(0)\Vt_{\LpnR{2}},
\end{equation}
and
\begin{equation}\label{fml-thm-L2H1}
\big\| \nabla(u_N v_M)\big\|_{L_{t,x}^2([0,T]\times\R^3)} \leq C_T MN^\frac12 \Vt u_N(0)\Vt_{\LpnR{2}} \Vt v_M(0)\Vt_{\LpnR{2}}.
\end{equation}
We assert that the proof of \eqref{fml-thm-L2H1} can be reduced to that of
\begin{equation}\label{fml-thm-asser}
\begin{aligned}
&\int_0^T \Lf( \int_{\vt x-y\vt<M^{-1}} M\big\vt u_N(x)\nabla_y \ovr{v}_M(y) + \ovr{v}_M(y)\nabla_x u_N(x) \big\vt^2 \Ind{x}\Ind{y} \Rt) \Ind{t}\\
\le& C N \Vt u_N(0)\Vt^2_{\LpnR{2}} \Vt v_M(0)\Vt^2_{\LpnR{2}}.
\end{aligned}
\end{equation} 
			
Therefore, the proof of Theorem \ref{thm-bilinear} mainly contains the following four steps:
\begin{enumerate}
\item[$\circ$] Step 1: Proof of the assertion \eqref{fml-thm-asser},
\item[$\circ$] Step 2: Showing that \eqref{fml-thm-asser} implies \eqref{fml-thm-L2H1},
\item[$\circ$] Step 3: Proof of the weighted bilinear estimate \eqref{fml-thm-L2x2},
\item[$\circ$] Step 4: Showing that \eqref{fml-thm-L2H1-Har} implies \eqref{fml-bilinear-res}.
\end{enumerate}

\textbf{Step 1: Proof of the assertion \eqref{fml-thm-asser}:}
We consider a $C^1$ function $\rho:\R^3 \to \R$ such that $\partial^2_{kl}\rho$ are piece-wise continuous functions with $1\le k,l\le 3$. Then, we define a bilinear form $B_\rho$ associated to the Hessian of $\rho$ as follows
\begin{equation*}
B_{\rho(z)}(f(z),g(z)) := \sum_{k,l=1}^3 f_k(z)\partial^2_{kl}\rho(z)g_l(z),
\end{equation*}
where $z \in \R^3$ and $f,g : \R^3\to\Bbb C^3$. We claim that  there holds
\begin{equation}\label{fml-thm-asser-asser}
\begin{aligned}
&\int_0^T \Lf( \int_{\vt x-y\vt<M^{-1}} B_{\wt{\rho}(x-y)}\left( \ovr{v}_M(y)\nabla u_N(x) + u_N(x) \nabla \ovr{v}_M(y) , v_M(y) \nabla \ovr{u}_N(x) + \ovr{u}_N(x)\nabla v_M(y) \right) \Ind{x}\Ind{y} \Rt) \Ind{t}\\
\le& C {\Vt  \nabla\wt{\rho}\Vt_{\LpnR{\infty}} }\\
&\times  \big( \Vt v_M(0)\Vt^2_{\LpnR{2}} \Vt u_N(0)\Vt_{\LpnR{2}} \Vt u_N(0)\Vt_{\CHsnR{1}} + \Vt u_N(0)\Vt^2_{\LpnR{2}} \Vt v_M(0)\Vt_{\LpnR{2}} \Vt v_M(0)\Vt_{\CHsnR{1}} \big).
\end{aligned}
\end{equation}
Here $\wt{\rho}(z) := \rho_M(z_1)$ is determined by a convex function $\rho_M(z_1) : \R \to \R$, where
\begin{equation*}
\rho_M(z_1) = 
\begin{cases}
\frac{Mz_1^2}{2} + \frac{1}{2M},\, &\vt z_1\vt \le \frac{1}{M},\\
\operatorname{sgn}(z_1)z_1 ,\, &\vt z_1\vt > \frac{1}{M}.
\end{cases}
\end{equation*}
			
Denote by $a_{jk}=\big(\frac{\partial^2}{\partial z_j\partial z_k}\widetilde{\rho}\big)_{3\times 3}$.	We can check that when $\vt z_1\vt > \frac{1}{M}$, the Hessian of $\wt{\rho}(z_1)$ vanishes identically while when $\vt z_1\vt \le \frac{1}{M}$, only $a_{11}$ is non-zero and equals to $M$. Therefore, apply $\wt{\rho}(z)$ here and \eqref{fml-thm-asser-asser} becomes
\begin{align*}
&\int_0^T \Lf( \int_{\vt x_1-y_1\vt<M^{-1}} M\big\vt \ovr{v}_M(y)\partial_{x_1} u_N(x) + u_N(x) \partial_{x_1}\ovr{v}_M(y) \big\vt^2 \Ind{x}\Ind{y} \Rt) \Ind{t}\\
\le& C \big( \Vt v_M(0)\Vt^2_{\LpnR{2}} \Vt u_N(0)\Vt_{\LpnR{2}} \Vt u_N(0)\Vt_{\CHsnR{1}} + \Vt u_N(0)\Vt^2_{\LpnR{2}} \Vt v_M(0)\Vt_{\LpnR{2}} \Vt v_M(0)\Vt_{\CHsnR{1}} \big).
\end{align*} 
Since the region $\vt x-y\vt<M^{-1}$ is contained in $\vt x_1-y_1\vt<M^{-1}$, we can change the integration region in the above equality to $\vt x-y\vt<M^{-1}$. Next, we set $\wt{\rho}(z) := \rho_M(z_j)$ with $j=2,3$ and $(x_1,y_1)$ in the previous inequality can be replaced by $(x_j,y_j)$ with $j=2,3$. Finally, notice that $u_N$ and $v_M$ are spectral localized and we have proved that $\eqref{fml-thm-asser-asser}$ implies \eqref{fml-thm-asser}. 
			
Now, we turn to prove \eqref{fml-thm-asser-asser}. We consider the interaction energy as follows
\begin{equation}
I_\rho(t) := \int \vt u(x)\vt^2 \rho(x-y) \vt v(y)\vt^2 \Ind{x}\Ind{y},
\end{equation} 
where we have abbreviated $u_N(x),v_M(y)$ to $u(x),v(y)$ for short. Then, we are devoted to the second derivative of $I_\rho(t)$. Let $K(x,y) : \R^3\times\R^3 \to \R$ be a function which is at least $C^1$ smooth and $w(t,x)$ solves \eqref{fml-linear-HNLS}, by integrating by parts, we get
\begin{equation}\label{fml-thm-1-deri}
\frac{\Ind{}}{\Ind{t}} \int K(x,y) \vt w(x)\vt^2 \Ind{x} = 2 \int \nabla_x K(x,y) \cdot \Im (\nabla \ovr{w}(x)\cdot w(x)) \Ind{x},
\end{equation}
and
\begin{align}\label{fml-thm-2-deri}
&\frac{\Ind{}^2}{\Ind{t}^2} \int K(x,y) \vt w(x)\vt \Ind{x} \\\nonumber
=& 4 \int B_{K(x,y)}(\nabla w(x),\nabla\wt{w}(x)) \Ind{x} - \int \Delta_x K(x,y) \cdot \Delta_x (\vt w(x)\vt^2) \Ind{x} - 4 \int x \cdot \nabla_x K(x,y) \vt w(x)\vt^2 \Ind{x}.
\end{align}
Notice that
\begin{equation*}
\frac{\Ind{}}{\Ind{t}} \Big(\vt u(x)\vt^2 \rho(x-y) \vt v(y)\vt^2\Big) =\big (\rho(x-y) \vt v(y)\vt^2\big) \frac{\Ind{}}{\Ind{t}} \vt u(x)\vt^2 + \big(\vt u(x)\vt^2 \rho(x-y)\big) \frac{\Ind{}}{\Ind{t}} \vt v(y)\vt^2,
\end{equation*}
we can apply \eqref{fml-thm-1-deri} with $K(x,y) = \rho(x-y) \vt v(y)\vt^2$ and $K(x,y) = \vt u(x)\vt^2 \rho(x-y)$ respectively (we have exchanged the position of variable $x$ and $y$ in the second term above), we get the first derivative of $I_{\rho}(t)$:
\begin{equation}\label{fml-thm-1-deri-Irho}
\begin{aligned}
&\frac{\Ind{}}{\Ind{t}} I_{\rho}(t)=2\int \nabla \rho(x-y) \cdot \Im(\nabla\ovr{u}(x)u(x)) \vt v(y)\vt^2 \Ind{x}\Ind{y} - 2\int \nabla \rho(x-y) \cdot \Im(\nabla\ovr{v}(y)v(y)) \vt u(x)\vt^2 \Ind{x}\Ind{y}.
\end{aligned}
\end{equation}
Thanks to conservation laws and $\Vt \nabla w\Vt_{\LpnR{2}}\lesssim\Vt w\Vt_{\CHsnR{1}}$, \eqref{fml-thm-1-deri-Irho} implies that
\begin{equation}\label{fml-thm-1-deri-Irho-bound}
\begin{aligned}
&\Lf\vt \frac{\Ind{}}{\Ind{t}} I_{\rho}(t)\Rt\vt  \lesssim \Vt \nabla\rho\Vt_{\LpnR{\infty}} (\Vt v(t)\Vt_{\LpnR{2}}^2\Vt u(t)\Vt_{\LpnR{2}}\Vt u(t)\Vt_{\CHsnR{1}} + \Vt u(t)\Vt_{\LpnR{2}}^2\Vt v(t)\Vt_{\LpnR{2}}\Vt v(t)\Vt_{\CHsnR{1}})\\
&\lesssim \Vt \nabla\rho\Vt_{\LpnR{\infty}} (\Vt v(0)\Vt_{\LpnR{2}}^2\Vt u(0)\Vt_{\LpnR{2}}\Vt u(0)\Vt_{\CHsnR{1}} + \Vt u(0)\Vt_{\LpnR{2}}^2\Vt v(0)\Vt_{\LpnR{2}}\Vt v(0)\Vt_{\CHsnR{1}}).
\end{aligned}
\end{equation}
			
Now, we treat the second derivative of $I_\rho(t)$. Direct computation shows
\begin{align*}
&\frac{\Ind{}^2}{\Ind{t}^2} \big(\vt u(x)\vt^2 \rho(x-y) \vt v(y)\vt^2\big) \\ 
=&\big(\rho(x-y) \vt v(y)\vt^2\big) \frac{\Ind{}^2}{\Ind{t}^2} \vt u(x)\vt^2 + \big(\vt u(x)\vt^2 \rho(x-y)\big) \frac{\Ind{}^2}{\Ind{t}^2} \vt v(y)\vt^2 + 2 \rho(x-y) \frac{\Ind{}}{\Ind{t}} \vt u(x)\vt^2 \frac{\Ind{}}{\Ind{t}} \vt v(y)\vt^2.
\end{align*}
Since the first and second term can be calculated by \eqref{fml-thm-2-deri}, we can treat the last term via integration by parts and \eqref{fml-thm-1-deri}:
\begin{equation}\label{fml-thm-2-deri-mid}
\begin{aligned}
\int \Lf( \int \rho(x-y)\frac{\Ind{}}{\Ind{t}}\vt u(x)\vt^2 \Ind{x} \Rt)\frac{\Ind{}}{\Ind{t}}\vt v(y)\vt^2 \Ind{y} &= 2 \Lf( \int \nabla\rho(x-y)\cdot\Im(\nabla \ovr{u}(x)u(x)) \Ind{x} \Rt)\frac{\Ind{}}{\Ind{t}}\vt v(y)\vt^2 \Ind{y}\\
&=-4 \int B_{\rho}(\Im(\nabla \ovr{u}(x)u(x)),\Im(\nabla \ovr{v}(y)v(y))) \Ind{x}\Ind{y}.
\end{aligned}
\end{equation}
			
Combining \eqref{fml-thm-2-deri} and \eqref{fml-thm-2-deri-mid}, we can write the second derivative of $I_\rho$ as
\begin{equation}\label{fml-thm-2-deri-Irho}
\begin{aligned}
\frac{\Ind{}^2}{\Ind{t}^2} I_{\rho}(t) =& 4 \int \Lf( B_{\rho(x-y)}(\nabla u(x),\nabla \ovr{u}(x))\vt v(y)\vt^2 + B_{\rho(x-y)}(\nabla v(y),\nabla \ovr{v}(y))\vt u(x)\vt^2 \Rt) \Ind{x}\Ind{y}\\
&- \int \Delta\rho(x-y) \Big( \Delta(\vt u(x)\vt^2)\vt v(y)\vt^2 + \Delta(\vt v(y)\vt^2)\vt u(x)\vt^2 \Big) \Ind{x}\Ind{y}\\
&- 8\int B_{\rho(x-y)}(\Im(\nabla\ovr{u}(x)u(x)),\Im(\nabla\ovr{v}(y)v(y))) \Ind{x}\Ind{y}\\
& -4\Re \int (x-y)\cdot\nabla\rho(x-y) \vt u(x)\vt^2 \vt v(y)\vt^2 \Ind{x}\Ind{y}\\
&:= I_1 + I_2 + I_3 + I_4.
\end{aligned}
\end{equation}
			
Since $\Delta\rho(x-y) = -\nabla_x\cdot\nabla_y\rho(x-y) = -\nabla_y\cdot\nabla_x\rho(x-y)$, using integration by parts with variable $y$ and $x$, we obtain
\begin{align*}
\int -\Delta\rho(x-y) \Delta_x(\vt u(x)\vt^2)\vt v(y)\vt^2 \Ind{x}\Ind{y} &= \int \nabla_y\cdot\nabla_x\rho(x-y) \Delta_x(\vt u(x)\vt^2)\vt v(y)\vt^2 \Ind{x}\Ind{y}\\
&=-\int \nabla_y(\vt v(y)\vt^2)\cdot\nabla_x\rho(x-y)\Delta_x(\vt u(x)\vt^2) \Ind{x}\Ind{y}\\
&= \int -\nabla_x\nabla_y\rho(x-y) \nabla_y(\vt v(y)\vt^2)\cdot\nabla_x(\vt u(x)\vt^2) \Ind{x}\Ind{y}\\
&=4\int B_{\rho(x-y)}(\Re(\ovr{u}(x)\nabla u(x)),\Re(\ovr{v}(y)\nabla v(y)))\Ind{x}\Ind{y}.
\end{align*}
Similarly, we can deal with the second term in $I_2$ and arrive at
\begin{equation}\label{fml-thm-I2-eq}
I_2 = 8 \int B_{\rho(x-y)}\big(\Re(\ovr{u}(x)\nabla u(x)),\Re(\ovr{v}(y)\nabla v(y))\big)\Ind{x}\Ind{y}.
\end{equation}
For one direction derivatives, we easily get
\begin{align*}
&4|v|^2(y) |\partial u|^2(x)+ 4|u|^2(x) |\partial v|^2(y)+ 8\frac{(v\partial\bar v+\bar v \partial v)(y)}2 \frac{(u\partial\bar{u}+\bar u \partial u)(x)} 2\\
&- 8\frac{(v\partial\bar v-\bar v \partial v)(y)}2\frac{ (\bar{u}\partial u-  u \partial\bar u)(x)}2\\
=& 4|v|^2(y) |\partial u|^2(x)+ 4|u|^2(x) |\partial v|^2(y)+ 4 v\partial\bar v(y) u\partial\bar u(x) +4 \bar v \partial{v}(y)\bar u\partial u(x) \\
=&  4 |\bar v (y)\partial u(x)+u(x)\partial \bar v(y)|^2.
\end{align*}
Therefore, we can absorb $I_1 + I_2 + I_3$ into a bilinear form and rewrite that
\begin{align*}
\frac{\Ind{}^2}{\Ind{t}^2} I_\rho(t) = 4\int B_{\rho}(x-y)\Big( \ovr{v}(y)\nabla u(x)+u(x)\nabla\ovr{v}(y) , v(y)\nabla \ovr{u}(x)+\ovr{u}(x)\nabla v(y)\Big) \Ind{x}\Ind{y} + I_4.
\end{align*}
Integrating in time \and applying \eqref{fml-thm-1-deri-Irho-bound}, we obtain
\begin{align*}
&4\Big|\int_0^T \int B_{\rho}(x-y)\Big( \ovr{v}(y)\nabla u(x)+u(x)\nabla\ovr{v}(y) , v(y)\nabla \ovr{u}(x)+\ovr{u}(x)\nabla v(y)\Big) \Ind{x}\Ind{y} \Ind{t}\Big|\\ 
\lesssim& \Lf\vt\frac{\Ind{}}{\Ind{t}} I_\rho(0)\Rt\vt + \Lf\vt\frac{\Ind{}}{\Ind{t}} I_\rho(T)\Rt\vt + \Lf\vt \int_0^TI_4(t)\Ind{t}\Rt\vt\\
\lesssim& \Vt \nabla\rho\Vt_{\LpnR{\infty}} \\
&\times(\Vt v(0)\Vt_{\LpnR{2}}^2\Vt u(0)\Vt_{\LpnR{2}}\Vt u(0)\Vt_{\CHsnR{1}} + \Vt u(0)\Vt_{\LpnR{2}}^2\Vt v(0)\Vt_{\LpnR{2}}\Vt v(0)\Vt_{\CHsnR{1}})\\
&+ \Lf\vt \int_0^TI_4(t)\Ind{t}\Rt\vt.
\end{align*}
Noting that $\Vt yw\Vt_{\LpnR{2}}\lesssim \Vt w\Vt_{\CHsnR{1}}$, hence $\int_0^TI_4(t)\Ind{t}$ enjoys the bound
\begin{align*}
&\Lf\vt \int_0^TI_4(t)\Ind{t}\Rt\vt \\
\lesssim& \int_0^T \int \vt x-y\vt \vt\nabla\rho(x-y)\vt \vt u(x)\vt^2 \vt v(y)\vt^2 \Ind{x}\Ind{y} \Ind{t}\\
\lesssim& \Vt \nabla\rho\Vt_{\LpnR{\infty}} \\
&\times\sup_{t \in (0,T)} \Big( \Vt u(t)\Vt^2_{\LpnR{2}}\Vt v(t)\Vt_{\LpnR{2}}\Vt yv(t)\Vt_{\LpnR{2}} + \Vt v(t)\Vt^2_{\LpnR{2}}\Vt u(t)\Vt_{\LpnR{2}}\Vt xu(t)\Vt_{\LpnR{2}} \Big)\\
\lesssim& \Vt \nabla\rho\Vt_{\LpnR{\infty}}\big(\Vt v(0)\Vt_{\LpnR{2}}^2\Vt u(0)\Vt_{\LpnR{2}}\Vt u(0)\Vt_{\CHsnR{1}} + \Vt u(0)\Vt_{\LpnR{2}}^2\Vt v(0)\Vt_{\LpnR{2}}\Vt v(0)\Vt_{\CHsnR{1}}\big).
\end{align*}
			
Therefore, we have proved assertion \eqref{fml-thm-asser-asser}.
\end{proof}
		
Following the similar argument to the proof of \eqref{fml-thm-asser}, we can obtain the following estimate	
\begin{equation}\label{fml-thm-asser-Hvm}
\begin{aligned}
&\int_0^T \Lf( \int_{\vt x-y\vt<M^{-1}} M\big\vt u_N(x)\nabla_y (H\ovr{v}_M)(y) + (H\ovr{v}_M)(y)\nabla_x u_N(x) \big\vt^2 \Ind{x}\Ind{y} \Rt) \Ind{t}\\
\le& C NM^2 \Vt u_N(0)\Vt^2_{\LpnR{2}} \Vt v_M(0)\Vt^2_{\LpnR{2}},
\end{aligned}
\end{equation}
which will be used in the proof of Step 2.

Before the proof of Step 2, we first introduce a useful lemma.
		
\begin{lemma}\label{lem-point-wise}
There exists $\lambda_0>0$ such that for any smooth function $\phi : \R^3 \to \mathbb{C}$ and $\lambda\ge \lambda_0$, the following holds
\begin{equation}\label{fml-lem-res}
\vt \phi(x)\vt^2 \lesssim \lambda^{-1} \int_{\vt x-y\vt<\lambda^{-1}} \vt H\phi(y)\vt^2 \Ind{y} + \lambda^{3} \int_{\vt x-y\vt<\lambda^{-1}} \vt \phi(y)\vt^2 \Ind{y},\,\, \text{for all}\,\, x \in \R^3.
\end{equation}
\end{lemma}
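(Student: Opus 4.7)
The plan is to reduce the estimate to a rescaled Sobolev embedding on a fixed ball. Fixing $x \in \R^3$ and $\lambda \ge \lambda_0$, I set $\psi(z) := \phi(x + \lambda^{-1}z)$. A direct computation gives
\[
\tilde{H}_x \psi(z) := (-\Delta_z + V_x(z))\psi(z) = \lambda^{-2}(H\phi)(x + \lambda^{-1}z),
\]
with $V_x(z) := \lambda^{-4}|\lambda x + z|^2 \ge 0$, and crucially $\Delta_z V_x \equiv 6\lambda^{-4}$, a constant independent of both $x$ and $z$. Under the change of variables $y = x + \lambda^{-1}z$ one computes $\|\tilde H_x\psi\|_{L^2(B_1)}^2 = \lambda^{-1}\|H\phi\|_{L^2(B_{\lambda^{-1}}(x))}^2$ and $\|\psi\|_{L^2(B_1)}^2 = \lambda^3\|\phi\|_{L^2(B_{\lambda^{-1}}(x))}^2$, so the claimed inequality is equivalent to $|\psi(0)|^2 \lesssim \|\tilde H_x \psi\|_{L^2(B_1)}^2 + \|\psi\|_{L^2(B_1)}^2$, up to a harmless enlargement of the ball which I absorb via a covering argument at the end.

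The heart of the proof is a uniform-in-$x$ inequality
\[
\|\Delta_z u\|_{L^2}^2 \le \|\tilde H_x u\|_{L^2}^2 + 6\lambda^{-4}\|u\|_{L^2}^2,\qquad u \in C_c^\infty(\R^3).
\]
To derive it I expand
\[
\|\tilde H_x u\|_{L^2}^2 = \|\Delta_z u\|_{L^2}^2 - 2\Re\langle \Delta_z u, V_x u\rangle + \|V_x u\|_{L^2}^2,
\]
and integrate by parts twice in the cross term: once to produce $\int V_x|\nabla u|^2 + \int(\nabla V_x\cdot\nabla u)\bar u$, and again using $\Re(\bar u\nabla u) = \tfrac12\nabla|u|^2$ together with the constancy of $\Delta V_x = 6\lambda^{-4}$ to get $-2\Re\langle\Delta_z u, V_x u\rangle = 2\int V_x|\nabla u|^2 - 6\lambda^{-4}\|u\|_{L^2}^2$. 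Since $V_x \ge 0$ and $V_x^2 \ge 0$, dropping the nonnegative contributions $2\int V_x|\nabla u|^2$ and $\|V_x u\|^2$ yields the inequality.

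To conclude, I apply the standard three-dimensional Sobolev embedding $H^2 \hookrightarrow L^\infty$ to $\eta\psi$, where $\eta \in C_c^\infty(B_2)$ equals $1$ on $B_1$, giving $|\psi(0)|^2 \lesssim \|\Delta_z(\eta\psi)\|_{L^2}^2 + \|\eta\psi\|_{L^2}^2$. The key inequality applied to $u = \eta\psi$, together with the commutator expansion $\tilde H_x(\eta\psi) = \eta\tilde H_x\psi - 2\nabla\eta\cdot\nabla\psi - \psi\Delta\eta$ and a routine Caccioppoli/interpolation estimate on a slightly larger ball bounding $\|\nabla\psi\|_{L^2(B_2)}^2$ by $\|\tilde H_x\psi\|_{L^2(B_3)}^2 + \|\psi\|_{L^2(B_3)}^2$, leads to $|\psi(0)|^2 \lesssim \|\tilde H_x\psi\|_{L^2(B_C)}^2 + \|\psi\|_{L^2(B_C)}^2$ for some fixed $C$. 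Unscaling and covering $B_{C\lambda^{-1}}(x)$ by smaller concentric balls of radius $\lambda^{-1}$ recovers the precise statement for $\lambda \ge \lambda_0$ taken large enough.

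The main obstacle is exactly this uniform-in-$x$ replacement of $\Delta$ by $\tilde H_x$. A naive substitution $\Delta\phi = -H\phi + |y|^2\phi$ followed by the triangle inequality produces an error of size $|x|^4\|\phi\|_{L^2(B)}^2$ that cannot be absorbed when $|x|\gg\lambda$. The remedy is to square $\tilde H_x u$ first, so that the two quadratic terms $2\int V_x|\nabla u|^2$ and $\|V_x u\|^2$ enter with their natural nonnegative sign; only the single constant $6\lambda^{-4}$ arising from $\Delta V_x$ survives after integration by parts, and this is precisely what makes the estimate uniform in $x$.
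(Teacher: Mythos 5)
Your proof is correct, and while the key algebraic observation is the same as the paper's, the packaging is genuinely different and, in some respects, cleaner.

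The paper begins by quoting a pointwise Sobolev bound from Planchon (Jussieu) with $-\Delta$ in place of $H$, and then proves the replacement of $-\Delta$ by $H$ directly on $\phi$ at scale $\lambda^{-1}$: it applies the identity $\|\Delta f\|^2+\||y|^2f\|^2+2\int|y|^2|\nabla f|^2 = \|Hf\|^2+2d\|f\|^2$ to $f=\chi_\lambda\phi$, which produces an error term $\int\chi_\lambda|y|^2\phi\,(2\nabla\chi_\lambda\!\cdot\!\nabla\phi+\Delta\chi_\lambda\,\phi)$ carrying an explicit $|y|^2$ weight, and then absorbs that weight into the nonnegative term $\int|y|^4\chi_\lambda^2|\phi|^2$ via Young's inequality; a separate Caccioppoli-type step controls $\int\widetilde\chi|\nabla\phi|^2$. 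You instead rescale first, $\psi(z)=\phi(x+\lambda^{-1}z)$, so the operator becomes $\widetilde H_x=-\Delta_z+V_x$ with $\Delta_z V_x\equiv 6\lambda^{-4}$ independent of $x$ and $z$. This is exactly the fact the paper's identity is exploiting (the $2d$ on its right-hand side \emph{is} $\Delta(|y|^2)$), but you state it cleanly: expanding $\|\widetilde H_x u\|^2$ yields $\|\Delta u\|^2+2\int V_x|\nabla u|^2+\|V_xu\|^2-6\lambda^{-4}\|u\|^2$, so dropping the two manifestly nonnegative middle terms gives the uniform inequality. After rescaling, the cutoff $\eta$ commutes with the multiplicative $V_x$, so your commutator $\widetilde H_x(\eta\psi)-\eta\widetilde H_x\psi = -2\nabla\eta\!\cdot\!\nabla\psi-\psi\Delta\eta$ contains no potential-weighted error and the Young-absorption step the paper needs simply disappears. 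Your Caccioppoli estimate on the fixed unit ball plays the role of the paper's estimate for $\int\widetilde\chi_\lambda|\nabla\phi|^2$, and the $H^2(\R^3)\hookrightarrow L^\infty$ embedding re-derives (in rescaled form) the pointwise bound the paper cites rather than proves. Two small wrinkles, both benign: the constant $6\lambda^{-4}$ forces $\lambda\ge\lambda_0$, which is exactly the threshold in the statement; and the final passage from a ball of radius $C\lambda^{-1}$ to radius $\lambda^{-1}$ is not really a covering argument (a concentric ball of radius $\lambda^{-1}$ does not cover $B_{C\lambda^{-1}}$) but is handled trivially by rerunning the argument with $\lambda$ replaced by $C\lambda$, which only changes $\lambda_0$ and the implicit constants.
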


\begin{proof}
Without of generality, we assume that $\phi$ is a real-valued function.  From \cite{Planchon-Jussieu}, we have 
\begin{equation*}
\vt \phi(x)\vt^2 \lesssim \lambda^{-1} \int_{\vt x-y\vt<(4\lambda)^{-1}} \vt -\Delta\phi(y)\vt^2 \Ind{y} + \lambda^3 \int_{\vt x-y\vt<(4\lambda)^{-1}} \vt \phi(y)\vt^2 \Ind{y}.
\end{equation*}
Hence, it is sufficient to prove that
\begin{equation}\label{fml-lem-point-deduce}
\begin{aligned}
&\lambda^{-1} \int_{\vt x-y\vt<(4\lambda)^{-1}} \vt -\Delta\phi(y)\vt^2 \Ind{y} \\
\lesssim& \lambda^{-1} \int_{\vt x-y\vt<(4\lambda)^{-1}} \vt H\phi(y)\vt^2 \Ind{y} + \lambda^3 \int_{\vt x-y\vt<(4\lambda)^{-1}} \vt \phi(y)\vt^2 \Ind{y}.
\end{aligned} 
\end{equation}
Since $-\Delta = H - \vt y\vt^2$, by integration by parts, for any $f \in C^\infty_0(\R^3)$ we get an equality
\begin{equation}\label{fml-lem-point-equality}
\int (\vt \Delta f\vt^2 + \vt y\vt^4 \vt f\vt^2 + 2\vt y\vt^2\vt \nabla f\vt^2) \Ind{y} = \int ( \vt Hf\vt^2 + 4\vt f\vt^2 ) \Ind{y}.
\end{equation}
			
Let $\chi_\lambda = \chi(\lambda(x-y))$ where $\chi\in C^\infty_0(\R^3)$ and
\begin{equation*}
\chi(\xi) = 
\begin{cases}
1, &\vt \xi\vt<\frac{1}{4},\\
0,&\vt \xi\vt > \frac{1}{2}.
\end{cases}
\end{equation*} 
Then, substituting $f(y) = \chi_\lambda(y)\phi(y)$ into \eqref{fml-lem-point-equality}, we arrive at
\begin{equation}\label{fml-lem-point-equality-chi}
\begin{aligned}
&\int (\vt \chi_\lambda\vt^2 \vt \Delta \phi\vt^2 + \vt y\vt^4 \vt \chi_\lambda\vt^2 \vt \phi\vt^2 + 2\vt y\vt^2\vt \nabla \chi_\lambda \phi\vt^2) \Ind{y}\\
=& \int ( \vt \chi_\lambda\vt^2\vt H\phi\vt^2 + 4\vt \chi_\lambda\vt^2 \vt \phi\vt^2 ) \Ind{y} - 2 \int \chi_\lambda \vt y\vt^2\phi (2\nabla\chi_\lambda\cdot \nabla\phi + \Delta\chi_\lambda\cdot \phi) \Ind{y}\\
\lesssim& \int ( \vt \chi_\lambda\vt^2\vt H\phi\vt^2 + 4\vt \chi_\lambda\vt^2 \vt \phi\vt^2 ) \Ind{y} + 2 \Lf\vt \int \chi_\lambda \vt y\vt^2\phi (2\nabla\chi_\lambda\cdot \nabla\phi + \Delta\chi_\lambda\cdot \phi) \Ind{y}\Rt\vt.
\end{aligned}
\end{equation}
Using Young's inequality, it gives that fo any $\mu > 0$
\begin{align*}
&\chi_\lambda \vt y\vt^2\phi (2\nabla\chi_\lambda\cdot \nabla\phi + \Delta\chi_\lambda\cdot \phi)
\lesssim \mu \vt \chi_\lambda\vt^2 \vt y\vt^4 \vt \phi\vt^2 + \mu^{-1}(2\nabla\chi_\lambda\cdot \nabla\phi + \Delta\chi_\lambda\cdot \phi)^2.
\end{align*}
We can choose $\mu$ properly, so that the term $\vt y\vt^4 \vt \chi_\lambda\vt^2 \vt \phi\vt^2$ in the l.h.s of \eqref{fml-lem-point-equality-chi} can be absorbed. Thus, we can control the term $\vt \chi_\lambda\vt^2 \vt \Delta \phi\vt^2$ in \eqref{fml-lem-point-equality-chi} as follows
\begin{equation}
\int \vt \chi_\lambda\vt^2 \vt \Delta \phi\vt^2 \Ind{y} \lesssim \int \vt \chi_\lambda\vt^2 \vt H\phi\vt^2 + 4 \vt \chi_\lambda\vt^2 \vt \chi\vt^2 + \vt \chi_\lambda\cdot \nabla\phi\vt^2 + \vt \Delta \chi_\lambda\vt^2 \vt \phi\vt^2 \Ind{y}.
\end{equation}
Basic calculation shows $\big\vt \vt \nabla\vt^k \chi_\lambda \big\vt \lesssim \lambda^k$ for $k =1,2$, it implies that
\begin{equation}
\int_{\vt y\vt<(4\lambda)^{-1}} \vt \Delta \phi\vt^2 \Ind{y} \lesssim \int_{\vt y\vt<(2\lambda)^{-1}} \vt H\phi\vt^2 \Ind{y} + \lambda^2 \int \wt{\chi}_\lambda \vt \nabla\phi\vt^2 \Ind{y} + (1+\lambda^4) \int_{\vt y\vt<(2\lambda)^{-1}} \vt \phi\vt \Ind{y},
\end{equation}
where $\wt{\chi}_\lambda(y) = \wt{\chi}(\lambda(y-x))$ with $\wt{\chi}$ is smooth and radial which satisfies 
\begin{equation*}
\wt{\chi}(\xi) = 
\begin{cases}
1,&\xi \in \mathrm{supp} \chi,\\
0,&\vt \xi\vt > 1.
\end{cases}
\end{equation*}
Hence, we deduce \eqref{fml-lem-point-deduce} to prove that
\begin{equation*}
\int \wt{\chi}_\lambda \vt \nabla\phi\vt^2 \Ind{y} \lesssim \lambda^{-2} \int_{\vt y\vt<(4\lambda)^{-1}} \vt \mathcal{H}\phi(y)\vt^2 \Ind{y} + \lambda^2 \int_{\vt y\vt<(4\lambda)^{-1}} \vt \phi(y)\vt^2 \Ind{y}.
\end{equation*}
Integrating by parts and using the fact that $H = -\Delta + \vt y\vt^2$, we can find
\begin{align*}
-2 \int (\wt{\chi}_\lambda \phi) \Delta\phi \Ind{y} &= 2 \int \wt{\chi}_\lambda \vt \nabla\phi\vt^2 \Ind{y} - \int \vt \phi\vt^2 \Delta\wt{\chi}_\lambda \Ind{y}\\
&= 2 \int (\wt{\chi}_\lambda \phi) H\phi \Ind{y} - 2 \int \vt y\vt^2 \wt{\chi}_\lambda \vt \phi\vt^2 \Ind{y}.
\end{align*}
By Young's inequality, for any $\lambda > 0$ we have
\begin{equation*}
\Lf\vt (\wt{\chi}_\lambda \phi) H\phi\Rt\vt \lesssim \lambda^{-2} \vt H\phi\vt^2 + \lambda^2 \vt \phi\vt^2, 
\end{equation*}
this implies
\begin{align*}
2 \int \wt{\chi}_\lambda \vt \nabla\phi\vt^2 \Ind{y} + 2 \int \vt y\vt^2 \wt{\chi}_\lambda \vt \phi\vt^2 \Ind{y} &= 2 \int (\wt{\chi}_\lambda \phi) H\phi \Ind{y} + \int \vt \phi\vt^2 \Delta\wt{\chi}_\lambda \Ind{y}\\
&\lesssim \lambda^{-2} \int_{\vt x-y\vt<\lambda^{-1}}\vt H\phi\vt^2 \Ind{y} + (1+\lambda^2)\int_{\vt x-y\vt<\lambda^{-1}}\vt \phi\vt^2 \Ind{y}.
\end{align*}
So far, we have proved \eqref{fml-lem-point-deduce} and this lemma follows immediately.

\textbf{Step 2: Showing that $\eqref{fml-thm-asser} \Rightarrow \eqref{fml-thm-L2H1}$:}
Recall that $u_N = e^{itH}u_N(0),v_M = e^{itH}v_M(0)$ and $N \ge M$, it is sufficient to control the term $\vt v_M\nabla u_N\vt^2$ in \eqref{fml-thm-L2H1}. Apply Lemma \ref{lem-point-wise} with $\lambda = M$, we have
\begin{equation}\label{fml-thm-asser-half}
\begin{aligned}
\int_0^T \Lf( \int \vt v_M\nabla u_N\vt^2(x) \Ind{x} \Rt) \Ind{t} \lesssim& M^{-1} \int_0^T \Lf( \int_{\vt x-y\vt<M^{-1}} \vt Hv_M(y) \nabla_x u_N(x)\vt^2 \Ind{x}\Ind{y}\Rt) \Ind{t} \\
& +M^3 \int_0^T \Lf(\int_{\vt x-y\vt<M^{-1}} \vt v_M(y) \nabla_x u_N(x)\vt^2 \Ind{x}\Ind{y}\Rt) \Ind{t}.
\end{aligned}
\end{equation} 
Using triangle inequality and \eqref{fml-thm-asser}, we arrive at
\begin{align*}
&M^3\int_0^T \Lf(\int_{\vt x-y\vt<M^{-1}} \vt v_M(y) \nabla_x u_N(x)\vt^2 \Ind{x}\Ind{y}\Rt) \Ind{t}\\
\lesssim& M^3\int_0^T \Lf(\int_{\vt x-y\vt<M^{-1}} \vt v_M(y) \nabla_x u_N(x) + u_N(x)\nabla_y \ovr{v}_M(y)\vt^2 \Ind{x}\Ind{y}\Rt) \Ind{t} \\
&+ M^3\int_0^T \Lf(\int_{\vt x-y\vt<M^{-1}} \vt u_N(x)\nabla_y \ovr{v}_M(y)\vt^2 \Ind{x}\Ind{y}\Rt) \Ind{t}\\
\lesssim& M^{2}N \Vt v_M(0)\Vt_{\LpnR{2}} \Vt u_N(0)\Vt_{\LpnR{2}} + \int_0^T \Lf(\int_{\vt x-y\vt<M^{-1}} M^3 \vt u_N(x)\nabla_y \ovr{v}_M(y)\vt^2 \Ind{x}\Ind{y}\Rt) \Ind{t}.
\end{align*}
We can treat the first term in l.h.s of \eqref{fml-thm-asser-half} by \eqref{fml-thm-asser-Hvm} and get that
\begin{align*}
&\int_0^T \Lf( \int M^3 \vt v_M\nabla u_N\vt^2(x) \Ind{x} \Rt) \Ind{t}\\
\lesssim& M^{2}N \Vt v_M(0)\Vt_{\LpnR{2}} \Vt u_N(0)\Vt_{\LpnR{2}} 
+ \int_0^T \Lf(\int_{\vt x-y\vt<M^{-1}} M^3\vt u_N(x)\nabla_y \ovr{v}_M(y)\vt^2 \Ind{x}\Ind{y}\Rt) \Ind{t}\\
& + \int_0^T \Lf(\int_{\vt x-y\vt<M^{-1}} M^3\vt u_N(x)\nabla_y (H\ovr{v}_M)(y)\vt^2 \Ind{x}\Ind{y}\Rt) \Ind{t}.
\end{align*}
			
On one hand, by change of variable, the Cauchy-Schwartz inequality and Strichartz estimate, there holds
\begin{equation}\label{fml-thm-asser-half-1}
\begin{aligned}
&\int_0^T \Lf(\int_{\vt x-y\vt<M^{-1}} M^3\vt u_N(x)\nabla_y \ovr{v}_M(y)\vt^2 \Ind{x}\Ind{y}\Rt) \Ind{t}\\
=& \int_0^T \Lf(\int_{\vt z\vt<M^{-1}} M^3\vt u_N(x)\nabla_y \ovr{v}_M(x-z)\vt^2 \Ind{x}\Ind{z}\Rt) \Ind{t}\\
\lesssim& \int_{\vt z\vt<M^{-1}} M^3\Vt u_N\Vt^2_{\Tsnself{\frac{10}{3}}{\frac{10}{3}}{[0,T]}} \Vt \nabla v_M\Vt^2_{\Tsnself{5}{5}{[0,T]}} \Ind{z}\\
\lesssim& M^3\Vt u_N(0)\Vt_{\LpnR{2}}^2 \Vt v_M(0)\Vt_{\LpnR{2}}^2.
\end{aligned}
\end{equation}
Here in the last inequality of \eqref{fml-thm-asser-half-1}, we have used the fact that
\begin{equation}\label{fml-thm-d+2d+2}
\Vt \nabla v_M\Vt^2_{\Tsnself{5}{5}{[0,T]}} \lesssim M^{3} \Vt v_M(0)\Vt^2_{\LpnR{2}}.
\end{equation}	
Indeed, by Sobolev embedding, we get
\begin{equation*}
\Vt \nabla v_M\Vt_{\Tsnself{5}{5}{[0,T]}} \lesssim \Vt \vt \nabla\vt^{\frac{3}{2}} v_M\Vt_{\Tsnself{5}{\frac{30}{11}}{[0,T]}}.
\end{equation*}
Since $-\Delta v_M$ solves
\begin{equation*}
(i\partial_t+H)(-\Delta v_M) = [ i\partial_t+H,-\Delta]v_M = 6 v_M+2x\cdot\nabla v_M,
\end{equation*}
and by Sobolev embedding and Strichartz estimates, we have
\begin{align}\nonumber
\Vt \Delta v_M\Vt_{\Tsnself{5}{\frac{30}{11}}{[0,T]}}&\lesssim \Vt \Delta v_M(0)\Vt_{\LpnR{2}} +\Vt v_M\Vt_{\Tsnself{1}{2}{[0,T]}}+\Vt x\cdot\nabla v_M\Vt_{\Tsnself{1}{2}{[0,T]}} \\\nonumber
&\lesssim \Vt v_M(0)\Vt_{\CHsnR{2}}\\\label{equ:vM2der}
&\lesssim M^{2} \Vt v_M(0)\Vt_{\LpnR{2}}.
\end{align}	
Interpolating between above and 
\begin{equation*}
\Vt v_M\Vt_{\Tsnself{5}{\frac{30}{11}}{[0,T]}} \lesssim \Vt v_M(0)\Vt_{\LpnR{2}},
\end{equation*}
we conclude that
\begin{equation*}
\Vt \vt \nabla\vt^{\frac{3}{2}} v_M\Vt_{\Tsnself{5}{\frac{30}{11}}{[0,T]}} \lesssim M^{\frac{3}{2}} \Vt v_M(0)\Vt_{\LpnR{2}},
\end{equation*}
and \eqref{fml-thm-d+2d+2} follows immediately.

On the other hand, we apply \eqref{fml-thm-asser-Hvm} and similar argument above to obtain that
\begin{equation}\label{fml-thm-asser-half-H}
\begin{aligned}
&\int_0^T \Lf(\int_{\vt x-y\vt<M^{-1}} M^{-1}\vt u_N(x)\nabla_y (H\ovr{v}_M)(y)\vt^2 \Ind{x}\Ind{y}\Rt) \Ind{t} \lesssim M^3 \Vt u_N(0)\Vt_{\LpnR{2}}^2 \Vt v_M(0)\Vt_{\LpnR{2}}^2.
\end{aligned}
\end{equation}
			
Consequently, \eqref{fml-thm-asser-half-1} and \eqref{fml-thm-asser-half-H} gives
\begin{equation*}
\begin{aligned}
\int_0^T \Lf( \int \vt v_M\nabla u_N\vt^2(x) \Ind{x} \Rt) \Ind{t} &\lesssim (NM + M^{2})\Vt u_N(0)\Vt_{\LpnR{2}}^2 \Vt v_M(0)\Vt_{\LpnR{2}}^2\\
&\lesssim MN \Vt u_N(0)\Vt_{\LpnR{2}}^2 \Vt v_M(0)\Vt_{\LpnR{2}}^2,
\end{aligned}
\end{equation*}
and we have finished the proof of Step 2.
			
\textbf{Step 3: Proof of bilinear estimate \eqref{fml-thm-L2x2}}:
Using H\"older's inequality, we find that the l.h.s of \eqref{fml-thm-L2x2} can be controlled by
\begin{equation*}
\Vt \vt x\vt v_M\Vt_{\Tsnself{5}{5}{[0,T]}} \Vt u_N\Vt_{\Tsnself{\frac{10}{3}}{\frac{10}{3}}{[0,T]}}.
\end{equation*}
It turns to estimate
\begin{equation}\label{fml-thm-d+2d+2-x2}
\Vt \vt x\vt v_M\Vt^2_{\Tsnself{5}{5}{[0,T]}} \lesssim \Vt \vt x\vt v_M\Vt^2_{L_t^{5}([0,T],\dot W_x^{\frac{1}{2},\frac{30}{11}}(\R^3))} \lesssim M^3 \Vt v_M(0)\Vt^2_{\LpnR{2}}.
\end{equation}
By interpolation, it can be reduced to prove
\begin{equation}\label{fml-thm-x2-deri1/2}
\big\Vt \vt x\vt^2 v_M\big\Vt^2_{L_t^{5}([0,T],\dot W_x^{\frac{1}{2},\frac{30}{11}}(\R^3))} \lesssim M^5 \Vt v_M(0)\Vt^2_{\LpnR{2}}, 
\end{equation}
and
\begin{equation}\label{fml-thm-deri1/2}
\Vt v_M\Vt^2_{L_t^{5}([0,T],\dot W_x^{\frac{1}{2},\frac{30}{11}}(\R^3))} \lesssim M \Vt v_M(0)\Vt^2_{\LpnR{2}}.
\end{equation}
Next, we prove \eqref{fml-thm-x2-deri1/2}, and \eqref{fml-thm-deri1/2} can be obtained similarly.
			
We can verify that $\vt x\vt^2 v_M$ solves
\begin{align*}
(i\partial_t + H)(\vt x\vt^2 v_M) = [i\partial_t+H,\vt x\vt^2]v_M = -6v_M-2x\cdot\nabla v_M.
\end{align*}
Then, by the Bernstein inequality, Strichartz estimates and Lemma \ref{lem-Bernstein}, we get that
\begin{align}\nonumber
\Vt \vt x\vt^2 v_M\Vt_{\Tsnself{5}{\frac{30}{11}}{[0,T]}} &\lesssim \Vt \vt x\vt^2v_M\Vt_{\LpnR{2}} + \Vt v_M(0)\Vt_{\Tsnself{1}{2}{[0,T]}} + \Vt x\cdot\nabla v_M\Vt_{\Tsnself{1}{2}{[0,T]}}\\\label{fml-thm-x2v_M}
&\lesssim \Vt v_M(0)\Vt_{\CHsnR{2}} \lesssim M^2\Vt v_M(0)\Vt_{\LpnR{2}}.
\end{align}
We also verify that $\nabla\cdot (\vt x\vt^2v_M)$ satisfies
\begin{align*}
(i\partial_t + H)(\nabla\cdot (\vt x\vt^2v_M)) &= [i\partial_t+H,\nabla\cdot\vt x\vt^2]v_M \\
&= -6\Delta v_M-2x v_M-6\nabla v_M-4x\cdot\nabla v_M-2\vt x\vt^2x v_M.
\end{align*}
Applying Bernstein inequality, Strichartz estimates and Lemma \ref{lem-Bernstein}, we find that
\begin{equation}\label{fml-thm-nabx2v_M}
\begin{aligned}
&\Vt x\cdot\nabla (\vt x\vt^2 v_M)\Vt_{\Tsnself{5}{\frac{30}{11}}{[0,T]}}\\ \lesssim& \Vt v_M(0)\Vt_{\CHsnR{3}} +\Vt \Delta v_M\Vt_{\Tsnself{1}{2}{[0,T]}}+\Vt x v_M\Vt_{\Tsnself{1}{2}{[0,T]}}\\
&+\Vt \nabla v_M\Vt_{\Tsnself{1}{2}{[0,T]}}+\Vt x\cdot \nabla v_M\Vt_{\Tsnself{1}{2}{[0,T]}}+\Vt \vt x\vt^2x\cdot v_M\Vt_{\Tsnself{1}{2}{[0,T]}}\\
\lesssim& \Vt v_M(0)\Vt_{\CHsnR{3}}\lesssim M^3 \Vt v_M(0)\Vt_{\LpnR{2}}.
\end{aligned}
\end{equation}
Hence, by \eqref{fml-thm-x2v_M}, \eqref{fml-thm-nabx2v_M} and interpolation, we arrive at
\begin{align*}
\Vt \vt x\vt^2 v_M\Vt^2_{\Tsnself{5}{5}{[0,T]}} &\lesssim \Vt \vt \nabla\vt^{\frac{1}{2}} v_M\Vt^2_{\Tsnself{5}{\frac{30}{11}}{[0,T]}}\\
&\lesssim \Vt v_M\Vt_{\Tsnself{5}{\frac{30}{11}}{[0,T]}} \Vt \vt \nabla\vt v_M\Vt_{\Tsnself{5}{\frac{30}{11}}{[0,T]}} \\
&\lesssim M^5 \Vt v_M(0)\Vt^2_{\LpnR{2}}.
\end{align*}
Hence, we  get \eqref{fml-thm-x2-deri1/2}. 
			
\textbf{Step4: Showing that \eqref{fml-thm-L2H1-Har} implies \eqref{fml-bilinear-res}}: By Littlewood-Paley decomposition, we get
\begin{equation*}
\Vt v_M u_N\Vt_{\Tsnself{2}{2}{[0.T]}} \leq \Big(\sum_{K >N} \Vt \Delta_K(v_M u_N)\Vt^2_{\Tsnself{2}{2}{[0,T]}}\Big)^\frac12 +\big\|S_N(v_M u_N)\big|_{\Tsnself{2}{2}{[0,T]}},
\end{equation*}
where $S_N := \sum\limits_{K\le N} \Delta_K.$ For the first term,  by Bernstein inequality and \eqref{fml-thm-L2H1-Har}, we get that
\begin{align*}
\sum_{K >N} \Vt \Delta_K(v_M u_N)\Vt^2_{\Tsnself{2}{2}{[0,T]}} &\lesssim \sum_{K > N} (1+K^2)^{-1} \Vt v_M u_N\Vt^2_{\TsnCHself{2}{1}{[0,T]}}\\
&\lesssim M^3 N^{-1}\Vt v_M u_N\Vt^2_{\Tsnself{2}{2}{[0,T]}},
\end{align*}
which is desired. For the second term, we write
\begin{equation*}
S_N(v_M u_N) = S_N(v_M N^{-2}H\wt{u}_N),
\end{equation*}
here, $\wt{u}_N = \wt{\Delta}_N u_N$ with spectral localized operator $\wt{\Delta}_N$ which satisfies $N^{-2}H\wt{\Delta}_N \circ \Delta_N = \Delta_N$. By definition, we have
\begin{equation*}
v_M H \wt{u}_N = H(v_M \wt{u}_N) + \wt{u}_N \Delta v_M + 2\nabla v_M\cdot\nabla \wt{u}_N.
\end{equation*}
Since $S_N$ and $N^{-1}H^{\frac{1}{2}}S_N$ are uniformly bounded operator on $\LpnR{2}$, from \eqref{fml-thm-L2H1-Har} we obtain
\begin{align*}
&\Vt S_N(v_M u_N)\Vt^2_{\LpnR{2}} \\
\lesssim& N^{-2} \Vt N^{-1}H^{\frac{1}{2}} S_N H^{\frac{1}{2}}(v_M u_N)\Vt^2_{\LpnR{2}} + N^{-4} \Lf( \Vt \wt{u}_N \Delta v_M\Vt_{\LpnR{2}}^2 + \Vt \nabla\wt{u}_N\cdot \nabla v_M\Vt_{\LpnR{2}}^2\Rt)\\
\lesssim& N^{-2}\Vt H^{\frac{1}{2}}(v_M u_N)\Vt^2_{\LpnR{2}} + N^{-4}\Lf( \Vt \wt{u}_N\Vt^2_{\LpnR{\frac{15}{2}}}\Vt \Delta v_M\Vt^2_{\LpnR{\frac{30}{11}}} + \Vt \nabla \wt{u}_N\Vt^2_{\LpnR{\frac{10}{3}}}\Vt \nabla v_M\Vt^2_{\LpnR{5}} \Rt).
\end{align*}
Integrating in time, then, use H\"older's inequality, Strichartz estimates and \eqref{fml-thm-L2H1-Har} we conclude that
\begin{align*}
&\Vt S_N(v_M u_N)\Vt^2_{\Tsnself{2}{2}{[0,T]}} \\
\lesssim& CN^{-2} \Vt v_M u_N\Vt_{\TsnCHself{2}{1}{[0,T]}}^2 + M^3N^{-4}(N^2 + M^2) \Vt v_M(0)\Vt_{\LpnR{2}}^2 \Vt \wt{u}_N(0)\Vt_{\LpnR{2}}^2\\
\lesssim& (M^{2}N^{-1} + M^3N^{-2}) \Vt v_M(0)\Vt_{\LpnR{2}}^2 \Vt \wt{u}_N(0)\Vt_{\LpnR{2}}^2\\
\lesssim& (M^{2}N^{-1} + M^3N^{-2}) \Vt v_M(0)\Vt_{\LpnR{2}}^2 \Vt u_N(0)\Vt_{\LpnR{2}}^2.
\end{align*}
Thus, we obtain \eqref{fml-bilinear-res}.
			
Therefore, we complete the proof of Theorem \ref{thm-bilinear}.
\end{proof}
		
However, merely apply Theorem \ref{thm-bilinear} can not establish desired energy increment estimate and we concern a bilinear estimate with derivatives and weight:

\begin{corollary}\label{cor-bilinear-PDQD}
Let $N,M,v_M(0),u_N(0)$ be the same as in Theorem \ref{thm-bilinear}. Let multi-index $\alpha,\beta$ with $\mathrm{Ord}(\alpha)=k_1,\mathrm{Ord}(\beta)=k_2$ and $P\in \{\nabla,x\}$. 
Then, for $T \in (0,\infty)$, there exists constant $C>0$ such that for $d \ge 2$
\begin{align}\label{fml-bilinear-PDQD-res}
&\Vt P(\alpha) e^{itH}u_N(0) \cdot P(\beta) e^{itH}v_M(0)\Vt_{\Tsnselfd{2}{2}{[0,T]}}\\\nonumber
\le& C {N^{k_1}M^{k_2}} \frac{M^{\frac{d-1}{2}}}{N^{\frac{1}{2}}} \Vt u_N(0)\Vt_{\LpnRd{2}} \Vt v_M(0)\Vt_{\LpnRd{2}},
\end{align}
where $P(\alpha)$ and $P(\beta)$ are as in Definition \ref{def-pre-Palp}.
\end{corollary}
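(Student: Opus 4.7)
The plan is to reduce Corollary \ref{cor-bilinear-PDQD} to Theorem \ref{thm-bilinear} by transferring the operators $P(\alpha)$, $P(\beta)$ from the solution to the initial data via Duhamel's formula, at the cost of commutator terms which I will argue are harmless. The function $W_\alpha(t) := P(\alpha) e^{itH} u_N(0)$ solves $(i\partial_t + H)W_\alpha = [H, P(\alpha)]\, e^{itH} u_N(0)$ with $W_\alpha(0) = P(\alpha) u_N(0)$, so Duhamel's formula yields
\begin{equation*}
P(\alpha) e^{itH} u_N(0) = e^{itH} P(\alpha) u_N(0) - i\int_0^t e^{i(t-s)H}[H, P(\alpha)]\, e^{isH} u_N(0)\, ds,
\end{equation*}
and an analogous identity holds for $P(\beta) e^{itH} v_M(0)$. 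Using the basic commutation identities $[H, \partial_j] = -2x_j$ and $[H, x_j] = -2\partial_j$, a straightforward induction on $\mathrm{Ord}(\alpha)$ shows that $[H, P(\alpha)]$ is a finite linear combination of operators $P(\alpha')$ with $\mathrm{Ord}(\alpha') = k_1$; in particular, the commutator preserves the operator order.

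Expanding the bilinear product then produces a main term $e^{itH} P(\alpha) u_N(0) \cdot e^{itH} P(\beta) v_M(0)$ together with three correction terms involving the Duhamel integrals. For the main term, even though $P(\alpha) u_N(0)$ is not exactly spectrally localized at scale $N$, I claim that a Littlewood-Paley decomposition combined with the order-preservation of $[P(\alpha), \Delta_{N'}]$ shows that $\Vt \Delta_{N'}(P(\alpha) u_N(0))\Vt_{\LpnRd{2}}$ is essentially supported on $N' \sim N$, with the uniform bound $\Vt P(\alpha) u_N(0)\Vt_{\LpnRd{2}} \lesssim N^{k_1} \Vt u_N(0)\Vt_{\LpnRd{2}}$ provided by Lemma \ref{lem-Bernstein}. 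Applying Theorem \ref{thm-bilinear} dyadically and then summing the resulting estimates yields
\begin{equation*}
\Vt e^{itH} P(\alpha) u_N(0) \cdot e^{itH} P(\beta) v_M(0)\Vt_{\Tsnselfd{2}{2}{[0,T]}} \lesssim \frac{M^{\frac{d-1}{2}}}{N^{\frac{1}{2}}} N^{k_1} M^{k_2} \Vt u_N(0)\Vt_{\LpnRd{2}} \Vt v_M(0)\Vt_{\LpnRd{2}},
\end{equation*}
which matches exactly the desired right-hand side of \eqref{fml-bilinear-PDQD-res}.

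For the three correction terms, I will combine Minkowski in time with the inhomogeneous Strichartz inequality applied to each Duhamel integral (viewed as the solution of a forced linear Schr\"odinger equation with forcing $[H,P(\alpha)]e^{isH}u_N(0)$ of order $k_1$), and then invoke Theorem \ref{thm-bilinear} on the frequency-localized pieces of the resulting profile; Lemma \ref{lem-Bernstein} converts the order-$k_1$ (respectively order-$k_2$) commutator into the factor $N^{k_1}$ (respectively $M^{k_2}$), with the $T$-dependent constant absorbed into $C$. The main technical obstacle lies in the main-term step, namely verifying that the bilinear gain $M^{(d-1)/2}/N^{1/2}$ survives the Littlewood-Paley decomposition of the not-quite-localized data $P(\alpha) u_N(0)$; this ultimately reduces to quantifying the rapid off-diagonal decay of $\Delta_{N'}(P(\alpha)\Delta_N u_N(0))$ when $N'$ is not comparable to $N$, which follows from the functional calculus for $H$ together with the order-preservation of $[P(\alpha),\Delta_N]$.
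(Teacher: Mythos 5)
Your proposal is correct and follows essentially the same route as the paper: Duhamel's formula converts $P(\alpha)e^{itH}u_N(0)$ into $e^{itH}P(\alpha)u_N(0)$ plus a forced term, the identity $[H,P(\alpha)]=\sum P(\tilde\alpha)$ with $\operatorname{Ord}(\tilde\alpha)=k_1$ (proved by induction from $[H,\nabla]=-2x$, $[H,x]=-2\nabla$) controls the forcing, and Lemma \ref{lem-Bernstein} together with Theorem \ref{thm-bilinear} closes the estimate. Your observation that $P(\alpha)u_N(0)$ is not exactly spectrally localized, so that applying Theorem \ref{thm-bilinear} to the main term requires a Littlewood--Paley decomposition plus off-diagonal decay of $\Delta_{N'}P(\alpha)\Delta_N$, is a legitimate point that the paper's proof glosses over entirely (it simply invokes ``Theorem \ref{thm-bilinear} and Lemma \ref{lem-Bernstein}'' for the main term without comment); you are being more careful than the source, though you also leave that step as a gesture rather than carrying it out.
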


\begin{proof}
Let $u_N := e^{itH}u_N(0)$ and note that $i\partial_t$  always commutes with $P$, we get that
\begin{align*}
(i\partial_t + H)(P(\alpha) u_N) &= [i\partial_t + H,P(\alpha)]u_N + P(\alpha) (i\partial_t + H)u_N\\
&= [H,P(\alpha)]u_N,
\end{align*} 
and by Duhamel's formula, we also get
\begin{align*}
P(\alpha) u_N &= e^{itH}P(\alpha) u_N(0) - i\int_0^T e^{i(t-s)H} \big[H,P(\alpha)\big] e^{isH}u_N(0) \Ind{s}\\
&=e^{itH}P(\alpha) u_N(0) - i e^{itH}\int_0^T e^{-isH} \big[H,P(\alpha)\big] e^{isH}u_N(0) \Ind{s}.
\end{align*}
Similar argument with that of $v_M$, we arrive at
\begin{equation*}
{P(\beta) v_M = e^{itH}P(\beta)  v_M(0) - i e^{itH}\int_0^T e^{-isH} \big[H,P(\beta) \big] e^{isH}v_M(0) \Ind{s}.}
\end{equation*}
			
By Theorem \ref{thm-bilinear} and Lemma \ref{lem-Bernstein}, one can verify that
\begin{equation}
\Vt  e^{itH}(P(\alpha) u_N(0)) \cdot e^{itH}(P(\beta) v_M(0))\Vt^2_{\Tsnselfd{2}{2}{[0,T]}} \le C M^{k_1}N^{k_2} \frac{M^{d-1}}{N} \Vt u_N(0)\Vt^2_{\LpnRd{2}} \Vt v_M(0)\Vt^2_{\LpnRd{2}}.
\end{equation}
Hence, we need to deal with $\Tsnselfd{2}{2}{[0,T]}$ norm of the form
\begin{align}\label{equ:eituNest1}
e^{itH}P(\alpha)u_N(0)\times e^{itH}\int_0^T e^{-isH}\big [H,P(\beta)\big] e^{isH}v_M(0) \Ind{s},\\\label{equ:eituNest2}
e^{itH}\int_0^T e^{-isH} [H,P(\alpha)] e^{isH}u_N(0) \Ind{s}\times e^{itH}P(\beta)v_M(0),\\\label{equ:eituNest3}
e^{itH}\int_0^T e^{-isH} \big[H,P(\alpha)\big] e^{isH}u_N(0) \Ind{s} \times e^{itH}\int_0^T e^{-isH} \big[H,P(\beta) \big] e^{isH}v_M(0) \Ind{s}.
\end{align}   
Using Theorem \ref{thm-bilinear}   and Lemma \ref{lem-Bernstein} again,  we conclude that
\begin{align*}
&\|\eqref{equ:eituNest1}+\eqref{equ:eituNest2}+\eqref{equ:eituNest3}\|_{\Tsnselfd{2}{2}{[0,T]}}\\
\leq&C_T\frac{M^{\frac{d-1}{2}}}{N^{\frac{1}{2}}}\Big[ N^{k_1}\|u_N(0)\|_{L_x^2} \sup_{s}\big\|[H,P(\beta)\big] e^{isH}v_M(0) \big\|_{L_x^2}
+\sup_s\big\|[H,P(\alpha)] e^{isH}u_N(0) \big\|_{L_x^2} M^{k_2}\|v_M(0)\|_{L_x^2}\\
&\qquad\qquad\quad+\sup_s\big\|[H,P(\alpha)] e^{isH}u_N(0) \big\|_{L_x^2} \big\|[H,P(\beta)\big] e^{isH}v_M(0) \big\|_{L_x^2}\Big]\\
\leq&C_T{N^{k_1}M^{k_2}} \frac{M^{\frac{d-1}{2}}}{N^{\frac{1}{2}}} \Vt u_N(0)\Vt_{\LpnRd{2}} \Vt v_M(0)\Vt_{\LpnRd{2}}
\end{align*}		
provided that we can show that
\begin{equation}\label{equ:NMk1k2}
\big\|[H,P(\alpha)]u_N(0) \big\|_{L_x^2}\leq CN^{k_1}\|u_N(0)\|_{L_x^2}.
\end{equation}
To prove \eqref{equ:NMk1k2}, we first assert that 
\begin{equation}\label{fml-BiPDQD-asser}
[P(\alpha),H] = \sum_{\vt\wt{\alpha}\vt \le k_1} P(\wt{\alpha}).
\end{equation}
Once we admit assertion \eqref{fml-BiPDQD-asser} and by Lemma \ref{lem-Bernstein}
\begin{equation*}
\Vt [H,P(\alpha)]u_N\Vt_{\LpnRd{2}} \lesssim \sum_{\vt\wt{\alpha}\vt \le k_1} \Vt P(\wt{\alpha})u_N(0)\Vt_{\LpnRd{2}} \lesssim N^{k_1} \Vt u_N\Vt_{\LpnRd{2}}.
\end{equation*}
And so we can obtain \eqref{equ:NMk1k2}.
			
Next, we turn to prove assertion \eqref{fml-BiPDQD-asser}. Observing that
\begin{equation*}
[P(\alpha),H] = \sum_{j=1}^{k_1} P(\alpha_1)\cdots[P(\alpha_j),H]\cdots P(\alpha_{k_1}),
\end{equation*}
meanwhile, it is clear
\begin{equation*}
[P(1),H] = [\nabla,H] = 2x,\,\, [P(2),H] = [x,H] = -2\nabla.
\end{equation*}
Consequently, assertion \eqref{fml-BiPDQD-asser} holds for all $k_1 \in \mathbb{N}$.
			
Therefore, we conclude the proof of Corollary \ref{cor-bilinear-PDQD}.
\end{proof}
		
As a consequence of Theorem \ref{thm-bilinear}, we also obtain the following bilinear estimate in Bourgain space. 

\begin{lemma}[Bilinear estimate in $X^{0,b}$]\label{lem:BilBourg}
Let  dyadic numbers $1\leq M\leq N$. Then, there holds  for $b>\frac12$
\begin{gather*}
\big\|{\Delta}_Nu{\Delta}_Mv\big\|_{L_{t,x}^2}\lesssim\frac{M^{\frac{d-1}{2}}}{N^{\frac{1}{2}}}\|{\Delta}_Nu\|_{X^{0,b}}\|{\Delta}_Mv\|_{X^{0,b}},\\
\Vt P(\alpha) \Delta_{N}u \cdot P(\beta) \Delta_{M}v\Vt_{L_{t,x}^2}
\le C {N^{k_1}M^{k_2}} \frac{M^{\frac{d-1}{2}}}{N^{\frac{1}{2}}} \Vt \Delta_{N}u\Vt_{X^{0,b}}  \Vt \Delta_{M}v\Vt_{X^{0,b}},
\end{gather*}
with ${\rm ord}(\alpha)=k_1,\,{\rm ord}(\beta)=k_2.$
\end{lemma}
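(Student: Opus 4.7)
The two inequalities are the Bourgain-space analogues of Theorem~\ref{thm-bilinear} and Corollary~\ref{cor-bilinear-PDQD} respectively. My plan is to deduce both by the standard transfer principle from the free propagator $e^{itH}$ to $X^{0,b}$, which is a routine but effective argument that relies on $b>\tfrac12$ to absolutely converge a $\tau$-integral via Cauchy--Schwarz. No real obstacle arises; the only point deserving attention is the identification, on the spectral side, of the Bourgain norm with a flat Sobolev norm in time.

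Concretely, for $u,v \in X^{0,b}$ I set $g(t,x):=e^{-itH}u(t,x)$ and $h(t,x):=e^{-itH}v(t,x)$. The spectral definition of the Bourgain norm yields $\|g\|_{H^b_t L^2_x}=\|u\|_{X^{0,b}}$ and $\|h\|_{H^b_t L^2_x}=\|v\|_{X^{0,b}}$, since on each eigenspace of $\sqrt{H}$ the conjugation by $e^{-itH}$ shifts the temporal Fourier variable $\tau$ by $\lambda_n^2$. Performing Fourier inversion in time, I may write
\[
u(t,x)=\int_{\R} e^{it\tau}\,e^{itH}\widetilde g(\tau,x)\,d\tau,\qquad v(t,x)=\int_{\R} e^{it\sigma}\,e^{itH}\widetilde h(\sigma,x)\,d\sigma,
\]
where $\widetilde g(\tau,\cdot),\widetilde h(\sigma,\cdot)$ denote the time Fourier transforms of $g$ and $h$.

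For the first inequality, the projector $\Delta_N$ commutes with $e^{itH}$ and with the scalar phase $e^{it\tau}$, so
\[
\Delta_N u\cdot\Delta_M v \;=\; \iint e^{it(\tau+\sigma)}\bigl(e^{itH}\Delta_N \widetilde g(\tau)\bigr)\,\bigl(e^{itH}\Delta_M \widetilde h(\sigma)\bigr)\,d\tau\,d\sigma.
\]
Taking $L^2_{t,x}$ norms on a unit time window, using Minkowski, and invoking Theorem~\ref{thm-bilinear} on each pair of building blocks reduces matters to bounding
\[
\frac{M^{(d-1)/2}}{N^{1/2}}\iint \|\Delta_N\widetilde g(\tau)\|_{L^2_x}\,\|\Delta_M\widetilde h(\sigma)\|_{L^2_x}\,d\tau\,d\sigma.
\]
Cauchy--Schwarz against the weights $\langle\tau\rangle^{-b}$ and $\langle\sigma\rangle^{-b}$, which are square-integrable precisely because $b>\tfrac12$, produces $\|\Delta_N g\|_{H^b_t L^2_x}=\|\Delta_N u\|_{X^{0,b}}$ and likewise for $v$, closing the estimate.

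For the second inequality, I repeat the argument verbatim, except that I move the spatial operators $P(\alpha)$ and $P(\beta)$ inside the integrals; since they act only in $x$, they commute with the scalar phases $e^{it\tau},e^{it\sigma}$ and with the time integration. At the Minkowski step I then invoke Corollary~\ref{cor-bilinear-PDQD} in place of Theorem~\ref{thm-bilinear}, which introduces the additional factor $N^{k_1}M^{k_2}$ into the bound. The Cauchy--Schwarz closing step is unchanged and yields the claimed inequality.
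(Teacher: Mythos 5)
Your proof follows essentially the same transfer-principle argument as the paper: conjugate by the linear flow, write each function as a $\tau$-superposition of free waves via Fourier inversion in time, apply Theorem~\ref{thm-bilinear} (respectively Corollary~\ref{cor-bilinear-PDQD}) pointwise in $\tau,\sigma$, then close with Cauchy--Schwarz using $\langle\tau\rangle^{-b}\in L^2$ for $b>\tfrac12$. One small point to double-check: with the paper's convention $\|u\|_{X^{0,b}}^2=\sum_n\|\langle\tau+\lambda_n^2\rangle^b\widehat{\pi_n u}\|_{L^2}^2$ and the linear flow $e^{-itH}$, the conjugation that flattens the weight is $g=e^{+itH}u$ (as the paper uses, $F=e^{itH}\Delta_N u$), not $g=e^{-itH}u$; with your sign the temporal weight becomes $\langle\tau-\lambda_n^2\rangle^b$, so either flip the sign of the conjugation or adjust the Fourier convention accordingly — the rest of the argument is unaffected.
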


\begin{proof}
	The proofs of two estimates stated in this lemma are similar. So we only give the detailed proof of the first one.
We denote $F(t)$ and $G(t)$ by
\begin{align*}
F(t):=e^{itH}{\Delta}_Nu(t),\quad G(t):=e^{itH}{\Delta}_Mv(t).
\end{align*}
By definition, we have 
\begin{gather*}
{\Delta}_Nu(t)=\frac{1}{2\pi}\int_{\R}e^{it\tau}e^{-itH}\widehat{F}(\tau)d\tau,\\
{\Delta}_Mv(t)=\frac{1}{2\pi}\int_{\R}e^{it\tau}e^{-itH}\widehat{G}(\tau)d\tau.
\end{gather*}
Furthermore, it implies 
\begin{align*}
{\Delta}_Nu{\Delta}_Mv=\frac{1}{4\pi^2}\int_{\R^2}e^{it(\tau_1+\tau_2)}\big(e^{-itH}\widehat{F}(\tau_1)e^{-itH}\widehat{G}(\tau_2)\big)d\tau_1d\tau_2.
\end{align*}
Using the Minkowski inequality and Theorem \ref{thm-bilinear}, we obtain
\begin{align*}
\big\|{\Delta}_Nu{\Delta}_Mv\big\|_{L_{t,x}^2([0,1]\times\R^d)}&\lesssim\Big\|\int_{\R^2}e^{it(\tau_1+\tau_2)}\big(e^{-itH}\widehat{F}(\tau_1)e^{-itH}\widehat{G}(\tau_2)\big)d\tau_1d\tau_2\Big\|_{L_{t,x}^2([0,1]\times\R^d)}\\
&\lesssim\int_{\R^2}\big\|e^{-itH}\widehat{F}(\tau_1)e^{-itH}\widehat{G}(\tau_2)\big\|_{L_{t,x}^2([0,1]\times\R^d)}d\tau_1d\tau_2\\
&\lesssim\frac{M^{\frac{d-1}{2}}}{N^{\frac{1}{2}}}\|\langle\tau_1\rangle^b\widehat{F}(\tau_1)\big\|_{L_{\tau_1}^2(\R,L^2(\R^d))}\|\langle\tau_2\rangle^b\widehat{G}(\tau_2)\big\|_{L_{\tau_2}^2(\R,L^2(\R^d))}\\
&\lesssim\frac{M^{\frac{d-1}{2}}}{N^{\frac{1}{2}}}\|{\Delta}_Nu\|_{X^{0,b}}\|{\Delta}_Mv\|_{X^{0,b}}.
\end{align*}
Hence, we complete the proof.
\end{proof}

Using Bernstein's inequality and Lemma \ref{lem:SobXTl4l2}(3), we get
\begin{align*}
\|\Delta_{N}u\Delta_Mv\|_{L_{t,x}^2}&\lesssim \|\Delta_{N}u\|_{L_t^4L_x^2}\|\Delta_{M}v\|_{L_t^4L_x^\infty}\\
&\lesssim M^{\frac{d}{2}}\|\Delta_{N}u\|_{L_t^4L_x^2}\|\Delta_{M}v\|_{L_t^4L_x^2}\\
&\lesssim M^{\frac{d}{2}}\|\Delta_{N}u\|_{X_H^{0,\frac14+}}\|\Delta_{M}v\|_{X_H^{0,\frac14+}}.
\end{align*}
Then interpolating with Lemma 3.2, we have the following bilinear estimates with $b^\prime\in(\frac14,\frac{1}{2})$.

\begin{lemma}\label{lem:bilinearbbes}
Let dyadic numbers $1\leq M\leq N$. There exists $b^\prime\in(\frac14,\frac12)$ and $\delta\in(0,\frac12)$ such that
\begin{align*}
\big\|\Delta_{N}u\Delta_{M}v\big\|_{L_{t,x}^2}\lesssim M^{\frac{d-1}{2}+\delta}N^{-\frac{1}{2}+\delta}\|\Delta_{M}u\|_{X_H^{0,b^\prime}}\|\Delta_{N}v\|_{X_H^{0,b^\prime}}.
\end{align*}
\end{lemma}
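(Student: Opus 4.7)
The plan is to obtain this lemma by bilinear complex interpolation between the two endpoint bilinear estimates that have already been established in the text preceding the statement. The first endpoint is Lemma \ref{lem:BilBourg}, which for $b > \tfrac12$ gives
\begin{equation*}
\bigl\|\Delta_N u\,\Delta_M v\bigr\|_{L^2_{t,x}} \lesssim \frac{M^{(d-1)/2}}{N^{1/2}}\,\|\Delta_N u\|_{X_H^{0,b}}\,\|\Delta_M v\|_{X_H^{0,b}}.
\end{equation*}
The second endpoint is the Bernstein plus Sobolev embedding computation shown immediately before the statement, which yields
\begin{equation*}
\bigl\|\Delta_N u\,\Delta_M v\bigr\|_{L^2_{t,x}} \lesssim M^{d/2}\,\|\Delta_N u\|_{X_H^{0,1/4+}}\,\|\Delta_M v\|_{X_H^{0,1/4+}}.
\end{equation*}

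Both estimates are of the form of a bounded bilinear map $X_H^{0,b}\times X_H^{0,b}\to L^2_{t,x}$, and the scale $\{X_H^{0,b}\}_{b\in\R}$ is a weighted $L^2$-scale in the variables $(\tau,\mu_n)$, hence a complex interpolation scale. Given $\theta\in(0,1)$, I would interpolate with weight $\theta$ on the first endpoint and $1-\theta$ on the second, choosing
\begin{equation*}
b' \;=\; \theta\bigl(\tfrac12+\bigr) + (1-\theta)\bigl(\tfrac14+\bigr) \;=\; \tfrac14 + \tfrac{\theta}{4}+,
\end{equation*}
which lies in $(\tfrac14,\tfrac12)$ as soon as $\theta$ is bounded away from $0$ and $1$. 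The resulting coefficient on the right-hand side is
\begin{equation*}
M^{\theta(d-1)/2 + (1-\theta)d/2}\,N^{-\theta/2} \;=\; M^{(d-1)/2+(1-\theta)/2}\,N^{-1/2+(1-\theta)/2}.
\end{equation*}
Setting $\delta:=(1-\theta)/2\in(0,\tfrac12)$ produces exactly the exponents $M^{(d-1)/2+\delta}$ and $N^{-1/2+\delta}$ claimed in the lemma.

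The only point requiring care is the legitimacy of the bilinear interpolation. Rather than invoke an abstract theorem, I would argue concretely as in the proof of Lemma \ref{lem:BilBourg}: write
\begin{equation*}
\Delta_N u(t) \;=\; \tfrac{1}{2\pi}\int_\R e^{it\tau}e^{-itH}\widehat{F}(\tau)\,d\tau,\qquad \Delta_M v(t) \;=\; \tfrac{1}{2\pi}\int_\R e^{it\tau}e^{-itH}\widehat{G}(\tau)\,d\tau,
\end{equation*}
split the $\tau$-integrals dyadically in $\langle\tau\rangle$, apply the $b=\tfrac12+$ estimate on the high-modulation pieces and the $b=\tfrac14+$ estimate on the low-modulation pieces, and then sum the resulting geometric series in $\langle\tau\rangle^{2(b'-1/4)}$ and $\langle\tau\rangle^{2(1/2-b')}$ using Cauchy-Schwarz in $\tau$. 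I do not anticipate any serious obstacle here: once the two endpoint inequalities are in hand, the interpolation step is essentially routine, and the bookkeeping reduces to verifying that $\delta=(1-\theta)/2$ indeed sweeps out $(0,\tfrac12)$ as $\theta$ ranges over $(0,1)$, which it does.
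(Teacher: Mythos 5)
Your proposal is correct and takes essentially the same approach as the paper: the paper's own proof consists of the one-line remark preceding the lemma, namely that one interpolates the $X_H^{0,\frac12+}$ bilinear estimate of Lemma \ref{lem:BilBourg} against the $L_t^4L_x^2$-plus-Bernstein estimate at $X_H^{0,\frac14+}$. Your arithmetic ($\delta=(1-\theta)/2$, $b'=\tfrac14+\tfrac{\theta}{4}+$) and your sketch of how to justify the bilinear interpolation by dyadic decomposition in modulation are exactly the details the paper leaves implicit.
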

		
Similarly, we have the following two Strichartz estimates in Bourgain space.
\begin{align}\label{linear-strichartz}
\|u\|_{L_{t,x}^p}\lesssim \|u\|_{X^{\alpha(p),\frac{1}{2}+}},
\end{align}
where $\alpha(p)=\frac{d}{2}-\frac{d+2}{p}+$.
		
{Next, using Lemma \ref{lem:bilinearbbes} and the same argument as in \cite[Proposition 6.2]{BPT}, we get the following trilinear estimate, which is crucial in the proof of the local well-posedness. }
		
\begin{lemma}[Nonlinear estimate]\label{Non-Est}
Let $s_0=\frac{d-2}{2}$. For any $s>s_0$, there exists $(b,b^\prime)\in\R^2$ satisfying
\begin{align*}
0<b^\prime<\frac{1}{2}<b,\quad b+b^\prime<1.
\end{align*}
Then it holds
\begin{align*}
\big\||u|^2u\big\|_{X^{s,-b^\prime}}\lesssim \|u\|_{X^{s,b}}^3.
\end{align*}
\end{lemma}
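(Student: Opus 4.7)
The approach is to dualize and then combine the bilinear estimate of Lemma \ref{lem:bilinearbbes} with a dyadic summation, following the template of \cite[Proposition 6.2]{BPT}. By the duality pairing between $X^{s,-b'}$ and $X^{-s,b'}$, the stated inequality is equivalent to the quadrilinear bound
\begin{equation*}
\left|\iint_{\R\times\R^d} u\,\bar u\,u\,\bar w\,dx\,dt\right| \lesssim \|u\|_{X^{s,b}}^3\,\|w\|_{X^{-s,b'}}.
\end{equation*}
A Littlewood--Paley decomposition $u_{N_j}=\Delta_{N_j}u$ for $j=1,2,3$ and $w_{N_4}=\Delta_{N_4}w$, together with the symmetry among the three copies of $u$, lets me assume $N_1\ge N_2\ge N_3$. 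The almost-orthogonality statement Theorem \ref{thm-ortho} then confines the summation to the regime $N_4\lesssim N_1$, since the contribution from $N_4\gg N_1$ decays rapidly in $N_4/N_1$ and sums trivially.

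In each dyadic block I would apply Cauchy--Schwarz to group the four factors into two $L^2_{t,x}$ products, choosing the pairing so that every pair mixes a high with a low frequency. Two sub-cases arise naturally: (i) $N_4\sim N_1$, where I pair $(u_{N_1},w_{N_4})$ with $(u_{N_2},u_{N_3})$; and (ii) $N_1\sim N_2$ with $N_3,N_4\lesssim N_1$, where I pair $(u_{N_1},u_{N_2})$ with $(u_{N_3},w_{N_4})$. Each $L^2_{t,x}$ norm of a pair is estimated by Lemma \ref{lem:bilinearbbes}, which produces a dyadic gain of order $\min^{(d-1)/2+\delta}/\max^{1/2-\delta}$ at the price of $X^{0,b'}$ norms. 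Since $b>b'$, each factor of $u$ can then be controlled in $X^{0,b}$ through the elementary embedding $X^{0,b}\hookrightarrow X^{0,b'}$.

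Introducing the $\ell^2$-sequences $a_N:=N^s\|\Delta_N u\|_{X^{0,b}}$ and $c_N:=N^{-s}\|\Delta_N w\|_{X^{0,b'}}$, with norms bounded respectively by $\|u\|_{X^{s,b}}$ and $\|w\|_{X^{-s,b'}}$, the full sum in each sub-case reduces to a dyadic expression in $a_{N_1}a_{N_2}a_{N_3}c_{N_4}$ whose kernel is the product of the Sobolev weight $N_4^s/(N_1^sN_2^sN_3^s)$ and the two bilinear gains. A book-keeping computation shows that, modulo the $\delta$-losses, the net power of the minimal dyadic index in this kernel is of order $s_0-s$, which is strictly negative whenever $s>s_0=(d-2)/2$ and $\delta$ is taken small enough. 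Geometric decay in the low-frequency indices, combined with a Schur test or repeated Cauchy--Schwarz against $\{a_N\}$ and $\{c_N\}$, then closes the estimate. The main obstacle is precisely sub-case (ii) when both $N_3$ and $N_4$ are much smaller than $N_1\sim N_2$: in this \emph{high-high-low-low} configuration the Sobolev weight acts in the most unfavourable direction, and only the bilinear gain of Lemma \ref{lem:bilinearbbes}, together with the critical threshold $s>s_0$, makes the dyadic summation convergent.
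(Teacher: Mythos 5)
Your high-level plan --- dualize against $w\in X^{-s,b'}$, reduce via Littlewood--Paley and the almost-orthogonality statement to the regime where the two largest dyadic scales are comparable, pair into two $L^2_{t,x}$ products, feed in Lemma~\ref{lem:bilinearbbes}, and close with a dyadic Schur/Cauchy--Schwarz argument --- is indeed the argument of \cite[Prop.~6.2]{BPT} to which the paper appeals for this lemma. However, the explicit pairings you spell out contradict the guiding principle you announce (``every pair mixes a high with a low frequency''), and in your case~(i) this contradiction is fatal to the estimate.

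In case~(i), $N_4\sim N_1\gg N_2\ge N_3$, you pair the two high-frequency factors $(u_{N_1},w_{N_4})$ together. Lemma~\ref{lem:bilinearbbes} applied to a pair with $\min\sim\max$ gives no gain: the prefactor is $\sim N_1^{(d-1)/2+\delta}\,N_1^{-1/2+\delta}=N_1^{(d-2)/2+2\delta}$, which grows in $N_1$. Unlike your case~(ii), where two copies of $u$ sit at the top scale and the Sobolev weight $(N_1N_2)^{-s}\sim N_1^{-2s}$ absorbs this loss for $s>(d-2)/2$, here the weight is $N_4^{s}/(N_1^{s}N_2^{s}N_3^{s})\sim (N_2N_3)^{-s}$: the positive power $N_4^{+s}$ coming from the dual variable exactly cancels the single $N_1^{-s}$, and no negative power of $N_1$ survives. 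After summing over $N_2,N_3$ you are therefore left with $\sum_{N_1}N_1^{(d-2)/2+2\delta}a_{N_1}c_{N_1}$, which is not controlled by $\|a\|_{\ell^2}\|c\|_{\ell^2}$ for $d\ge3$, and still carries a diverging $N_1^{2\delta}$ for $d=2$. The fix is to do what you originally stated: in case~(i) pair high with low, e.g.\ $(u_{N_1},u_{N_3})$ with $(w_{N_4},u_{N_2})$, which produces the kernel $N_1^{-1+2\delta}N_2^{(d-1)/2+\delta-s}N_3^{(d-1)/2+\delta-s}$ and sums precisely under the hypothesis $s>(d-2)/2$ once $\delta$ is small. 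Your case~(ii) pairing works only because the high--high--low--low structure puts two powers of $N_1^{-s}$ on the high side; this is a feature of that configuration, not a general license to pair comparable scales together.
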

	
\subsection{I-operator and the modified $I$ equation}

We first introduce the $I$-operator adapted to our setting. Using the functional calculus, we can define the spectral multiplier by
\begin{align*}
I_N(u)=I(\sqrt{H})u=\sum_{k\in\N}m({\nu_k})\pi_ku,
\end{align*}
where $\nu_k$ is the $k$-th eigenvalue associated to the operator $\sqrt{H}$. Here, we use the eigenvalues after rearrangement. for $s>1$, we define
\begin{align*}
m(k):=
\begin{cases}
1,&\mbox{if }k\leq N,\\
\big(\frac{N}{k}\big)^{1-s},&\mbox{if }k\geq 2N
\end{cases}
\end{align*}

\begin{remark}\label{Rem-local-wellposed}
It's easy to check that 
\begin{align*}
\|f\|_{\mathcal{H}^s(\R^d)}\lesssim N^{s-1}\|If\|_{\mathcal H^1(\R^d)}\lesssim N^{s-1}\|f\|_{\mathcal H^s(\R^d)}.
\end{align*}
\end{remark}

Next, we act the $I$ operator in both side of \eqref{fml-NLS} to get the following modified equation
\begin{align}\label{fml-I-system}
\begin{cases}
i\partial_tIu-H Iu=I(|u|^2u),&(t,x)\in \R\times\R^d,\\
Iu(0,x)=Iu_0\in\mathcal{H}^1(\R^d).
\end{cases}
\end{align}
First, we will establish the local-well-posedness for the modified equation $Iv$.
		
\begin{proposition}[Local well-posedness for $I$-equation]\label{prop-local}
Let $s>1$ and $u_0\in\mathcal{H}^s(\R^d)$. There exist a constant $c>0$ and  a time interval $[0,\delta]$ satisfying that $\delta\sim\|Iu_0\|_{\mathcal{H}^1(\R^d)}^{-c}$, the solution to \eqref{fml-I-system} obeys the following estimate
\begin{align*}
\|Iu\|_{X^{1,\frac12+}([0,\delta]\times\R^d)}\lesssim\|Iu_0\|_{\mathcal{H}^1(\R^d)}.
\end{align*}
\end{proposition}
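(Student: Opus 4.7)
The plan is a standard contraction-mapping argument in the restricted Bourgain space $X_H^{1,b}([-\delta,\delta]\times\R^d)$, performed on the variable $w:=Iu$. Writing \eqref{fml-I-system} in Duhamel form,
\[
w(t)=\psi(t/\delta)\,e^{itH}Iu_0\;-\;i\,\psi(t/\delta)\int_0^t e^{i(t-s)H}\,I(|u|^2u)(s)\,ds,
\]
with $\psi$ a standard time cutoff and $u$ recovered from $w$ through the spectral multiplier $I$ (which is invertible since $m\ge 1$ pointwise for $s>1$), I would fix exponents $b>\tfrac12$ and $b'\in(\tfrac14,\tfrac12)$ with $b+b'<1$, compatible with the hypotheses of Lemma \ref{Non-Est}, and seek a fixed point of the corresponding map $\Phi$.

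By the homogeneous and inhomogeneous estimates in $X_H^{s,b}$ recalled above, $\Phi$ satisfies the schematic bound
\[
\|\Phi(w)\|_{X_H^{1,b}([-\delta,\delta])}\;\lesssim\;\|Iu_0\|_{\mathcal H^1}\;+\;\delta^{1-b-b'}\|I(|u|^2u)\|_{X_H^{1,-b'}([-\delta,\delta])},
\]
so the argument reduces to the trilinear estimate
\begin{equation}\label{plan-trilin}
\|I(|u|^2u)\|_{X_H^{1,-b'}}\;\lesssim\;\|Iu\|_{X_H^{1,b}}^3.
\end{equation}
Lemma \ref{Non-Est} applied at regularity $s=1$ (admissible since $1>s_0=(d-2)/2$ for $d\in\{2,3\}$) to $v=Iu$ yields $\||Iu|^2Iu\|_{X_H^{1,-b'}}\lesssim \|Iu\|_{X_H^{1,b}}^3$, so what remains for \eqref{plan-trilin} is the comparison of $I(|u|^2u)$ with $|Iu|^2Iu$. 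Expanding each factor in Hermite projectors $\pi_{\mu_j}$ and pairing with a test function, these two expressions give rise to quadrilinear forms with symbols $m(\mu_4)$ and $m(\mu_1)m(\mu_2)m(\mu_3)$ respectively, where $\mu_4$ is the output eigenvalue. Theorem \ref{thm-ortho} renders the contribution with $\mu_4\gg\max_j\mu_j$ negligible, leaving the regime $\mu_4\lesssim\max_j\mu_j$; on this region the ratio
\[
\widetilde m(\mu_1,\mu_2,\mu_3,\mu_4):=\frac{m(\mu_4)}{m(\mu_1)m(\mu_2)m(\mu_3)}
\]
is uniformly bounded, since $m$ is nondecreasing with $m\ge 1$ for $s>1$, so that $m(\mu_4)\le m(\max_j\mu_j)\le\prod_j m(\mu_j)$; a direct calculation from the explicit formula for $m$ shows that $\widetilde m$ also obeys the Coifman--Meyer-type derivative bounds demanded by Theorem \ref{thm-multi}. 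Applying Theorem \ref{thm-multi} then transfers this pointwise symbol domination into \eqref{plan-trilin}.

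Combining the preceding estimates with an analogous Lipschitz bound for $\Phi(w_1)-\Phi(w_2)$ and choosing $\delta\sim\|Iu_0\|_{\mathcal H^1}^{-c}$ with $c$ determined by $(b,b')$ (for instance $c=2/(1-b-b')$) yields a contraction on the ball of radius $2C\|Iu_0\|_{\mathcal H^1}$ in $X_H^{1,b}([-\delta,\delta])$, producing the unique solution with the advertised bound. The main obstacle I anticipate lies in the trilinear step \eqref{plan-trilin}: unlike the Euclidean or toroidal cases, products of Hermite eigenfunctions are not spectrally localized to a single dyadic shell, so one cannot directly read off the output frequency from the input frequencies. This is precisely where Theorem \ref{thm-ortho} is used to discard the ``high-output'' tails and restrict to $\mu_4\lesssim\max_j\mu_j$, and where the spectral multiplier Theorem \ref{thm-multi} is needed to convert the pointwise domination of $\widetilde m$ into the desired functional inequality; verifying the Coifman--Meyer derivative bounds for $\widetilde m$ uniformly in the cutoff parameter $N$ is the delicate point of this step.
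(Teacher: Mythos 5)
Your overall scaffolding matches the paper's: Duhamel plus the homogeneous/inhomogeneous Bourgain-space estimates reduce matters to the trilinear bound $\|I(|u|^2u)\|_{X^{1,-b'}}\lesssim\|Iu\|_{X^{1,b}}^3$, after which a fixed-point (or continuity) argument with $\delta\sim\|Iu_0\|_{\mathcal H^1}^{-c}$ closes the loop; the paper uses a continuity argument where you use contraction mapping, but that is immaterial. Where you genuinely diverge is in how you get the trilinear bound. The paper re-runs the proof of Lemma \ref{Non-Est} (i.e.\ \cite[Prop.\ 6.2]{BPT}) with the $m$-weights inserted from the start, relying on the crude multiplier bound $m(\mu_4)\lesssim\prod_{j\le 3}m(\mu_j)$ at the point where dyadic pieces are paired. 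You instead treat Lemma \ref{Non-Est} as a black box applied to $|Iu|^2Iu$ at $s=1$, then transfer to $I(|u|^2u)$ through the spectral multiplier Theorem \ref{thm-multi} applied to the ratio $\widetilde m=m(\mu_4)/\bigl(m(\mu_1)m(\mu_2)m(\mu_3)\bigr)$, discarding the regime $\mu_4\gg\max_j\mu_j$ via the almost-orthogonality Theorem \ref{thm-ortho}. This is a valid and in some ways cleaner route — it isolates the ``interpolation-lemma'' step of the $I$-method as a single application of Theorem \ref{thm-multi} rather than re-deriving the nonlinear estimate — but it does demand two things you only gesture at: (i) $\widetilde m$ is \emph{not} globally Coifman--Meyer (it grows polynomially in $\mu_4$ once $\mu_4\gg N$), so the application of Theorem \ref{thm-multi} must be preceded by the dyadic decomposition and a separate summation of the high-output tail against the $\lambda^{-\ell}$ decay from Theorem \ref{thm-ortho}, and (ii) the Coifman--Meyer derivative bounds for $\widetilde m$ on the complementary region need the explicit observation $|\xi^k m^{(k)}(\xi)|\lesssim m(\xi)$, uniformly in the parameter $N$, which you correctly flag as the delicate point. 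With those two items supplied, your argument would be a complete alternative proof.
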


\begin{proof}
By the Duhamel principle, we can write the solution $Iv$ as
\begin{align*}
Iu(t,x)=e^{-itH}Iu_0-\int_{0}^{t}e^{i(t-s)H}I(|u|^2u)(s)ds.
\end{align*}
For $0<b^\prime<\frac{1}{2}<b$ and $b+b^\prime<1$, we have
\begin{align*}
\|Iu\|_{X^{1,b}}\lesssim\|Iv_0\|_{\mathcal{H}^1(\R^d)}+\delta^{1-b-b^\prime}\|I(|v|^2v)\|_{X^{1,-b^\prime}}.
\end{align*}
On the other hand,  using the bound for multiplier $m(\xi)$ and following the proof of Lemma \ref{Non-Est}(see \cite[Proposition 6.2]{BPT}),  we  deduce that 
\begin{align*}
\|I(|v|^2v)\|_{X^{1,-b^\prime}([0,\delta]\times\R^d)}\lesssim\|Iv\|_{X^{1,b}([0,\delta]\times\R^d)}^3.
\end{align*}
Then using the continuity method, we conclude that for $\delta\sim \|Iu_0\|_{\mathcal H^1(\R^d)}^{-c}$,
\begin{align*}
\|Iv\|_{X^{1,b}([0,\delta]\times\R^d)}\lesssim\|Iu_0\|_{\mathcal H^1(\R^d)},\quad b>\tfrac{1}{2}.
\end{align*}
			
Therefore, we complete the proof of Proposition \ref{prop-local}.
\end{proof}
		
%%%%%%%%%%%%%%%%%%%%%%%%%%%%%%%%%%%%%%%%%%%%%%%%%%%%%%%%%%%%%%%%%%%%%%%%%

%%%%%%%%%%%%%%%%%%%%%%%%%%%%%%%%%%%%%%%%%%%%%%%%%%%%%%%%%%%%%%%%%%%%%%%%%

\section{Spectral localization of eigenfunctions associated to harmonic oscillator}\label{Sec-Interfour}
		
In this section, we study the spectral localization of products of eigenfunctions. Let $f,g$ be the eigenfunctions of $H$ spectrally localized around $m$ and $n$, a good question is that whether for $\pi_nf$ and $\pi_ng$, where is the location of  $\pi_nf\pi_mg$ in frequency space. In this paper, we will focus on the spectral localization of three product of eigenfunctions, which will be crucial in using the I-method. We refer the detail to Subsection \ref{subsec:modifyenergy}.
		
As presented in the previous section, we known that if $k$ is sufficiently large, we have the good property of spectral localization. For convenience, we recall the property here. 

\begin{lemma}[Almost orthogonal, \cite{Akahori,BPT}]\label{lem:orteigde}
Let $e_1,\cdots,e_4$ be $L^2$ normalized eigenfunction of the operator $H$ with respect to the eigenvalues $\mu_1^2,\mu_2^2,\mu_3^2,\mu_4^2$. Then, there exists $C>0$ such that for $\mu_1^2\geq C(\mu_2^2+\mu_3^2+\mu_4^2)$, it holds
\begin{align*}
\left|\int_{\R^d}e_1e_2e_3e_4dx\right|\leqslant C_p\mu_1^{-p},\quad \forall\;p>0.
\end{align*}
\end{lemma}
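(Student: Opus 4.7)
The plan is to derive Lemma \ref{lem:orteigde} as an immediate corollary of Theorem \ref{thm-ortho} (Akahori's almost orthogonality), specialized to $m = 3$. First I would verify that the spectral separation hypothesis transfers correctly: from $\mu_1^2 \geq C(\mu_2^2 + \mu_3^2 + \mu_4^2)$ one deduces $\mu_1^2 \geq C\mu_j^2$ for every $j \in \{2,3,4\}$, so choosing $C \geq C_0^2$ (with $C_0$ the threshold appearing in Theorem \ref{thm-ortho}) secures $\mu_1 \geq C_0\mu_j$ for $j = 2, 3, 4$, which is exactly what Theorem \ref{thm-ortho} demands.

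Next, I would take $u_0 := e_1$ and $u_j := e_{j+1}$ for $j = 1, 2, 3$ in Theorem \ref{thm-ortho}. Since each $e_j$ lies in the eigenspace of $H$ associated with $\mu_j^2$, one has $\pi_{\mu_j} e_j = e_j$, while $\|e_j\|_{L^2(\R^d)} = 1$ by normalization. Applying Theorem \ref{thm-ortho} with $\ell = p$ then yields
\[
\Big|\int_{\R^d} e_1 e_2 e_3 e_4 \, dx \Big| \leq C_p\, \mu_1^{-p} \prod_{j=1}^{4} \|e_j\|_{L^2} = C_p\, \mu_1^{-p},
\]
which is exactly the desired conclusion.

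There is no substantive obstacle along this route because the hard work (the semiclassical parametrix giving fast decay in the largest eigenvalue) has been absorbed into Theorem \ref{thm-ortho}; the only thing to be careful about is the translation between the hypothesis written in terms of $\mu_1^2$ (natural for $H$) and the one written in terms of $\mu_1$ (natural for $\sqrt{H}$), which is just a matter of enlarging the constant $C$. A self-contained alternative would instead iterate the identity $L_0 = (-2)^k (\mu_1^2 - \mu_2^2 - \mu_3^2 - \mu_4^2)^{-k} L_k$ announced in the introduction, which follows from repeated use of the self-adjointness of $H$ together with the Leibniz expansion $H(e_2 e_3 e_4) = (\mu_2^2+\mu_3^2+\mu_4^2)\, e_2 e_3 e_4 - 2|x|^2\, e_2 e_3 e_4 - 2 \sum_{i<j} (\nabla e_i \cdot \nabla e_j)\, e_l$. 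Under the spectral-gap hypothesis the denominator is $\gtrsim \mu_1^{2k}$, but controlling $|L_k|$ only by H\"older's inequality and Sobolev embedding for the Hermite factors leaves unavoidable polynomial losses in $\mu_1$, so arbitrary polynomial decay ultimately rests on the refined microlocal analysis behind Theorem \ref{thm-ortho}.
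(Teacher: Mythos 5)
Your derivation is correct and is exactly the reasoning the paper intends: Lemma~\ref{lem:orteigde} is a direct restatement of Theorem~\ref{thm-ortho} (cited from Akahori and recalled in Section~\ref{Sec-Interfour} without proof), specialized to $m=3$ with each $u_j$ chosen as a normalized eigenfunction so that $\pi_{\mu_j}e_j=e_j$ and $\|e_j\|_{L^2}=1$. Your observation about the translation between the $\sqrt{H}$-eigenvalue hypothesis $\mu_1\geq C_0\mu_j$ and the $H$-eigenvalue hypothesis $\mu_1^2\geq C(\mu_2^2+\mu_3^2+\mu_4^2)$ via $C\geq C_0^2$ is the right bookkeeping, and your closing remark correctly identifies why a purely elementary iteration of the identity $L_0=(-2)^k(\mu_1^2-\mu_2^2-\mu_3^2-\mu_4^2)^{-k}L_k$ cannot recover arbitrary polynomial decay without the microlocal/parametrix input underlying Theorem~\ref{thm-ortho}.
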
  
		
However, in using the $I$-method, the main difficult part arise from the interaction of $\mu_1\sim \mu_2$. Thus, the above estimate is too crude to our application. We need to give the explicit formula for the left-hand side of the above estimate. For compact manifold, Hani \cite{Hani2} first establish such type formula for product of eigenfunctions	to Laplace-Beltrami operator. In the following, we will establish the spectral localization property for eigenfunctions to the Schr\"odinger operator $-\Delta+|x|^2$. This is the first result of spectral localization for Schr\"odinger operator with trapping potential in $\R^d$.
		
\begin{theorem}\label{thm-inter4}
Let $e_1,e_2,e_3$ and $e_4$ be eigenfunctions of $H$ corresponding eigenvalues $\mu_1^2,\mu_2^2,\mu_3^2$ and $\mu_4^2$. We set 
\begin{equation*}
L_0(e_1,e_2,e_3,e_4) = \int_{\R^d} e_1(x)e_2(x)e_3(x)e_4(x) \Ind{x}.
\end{equation*}
Then, for any $k\in\mathbb{N}$, we have
\begin{equation*}
L_0 = \frac{(-2)^k}{(\mu_1^2 - \mu_2^2- \mu_3^2- \mu_4^2)^k} L_k.
\end{equation*}
Here $L_k$ is the form
\begin{equation*}
L_k(e_1,e_2,e_3,e_4) = \int e_1(x)\cdot \wt{R}_{k}(e_2,e_3,e_4)(x) \Ind{x},
\end{equation*}
where $\wt{R}_{k}(e_2,e_3,e_4)$ is linear combination of the form
\begin{equation*}
P(\alpha) e_2(x)P(\beta) e_3(x)P(\gamma) e_4(x),
\end{equation*}
with $P\in\{\nabla,x\}$, $\mathrm{Ord}(\alpha) + \mathrm{Ord}(\beta) + \mathrm{Ord}(\gamma) \le 2k$ and $0\le \mathrm{Ord}(\alpha),\mathrm{Ord}(\beta),\mathrm{Ord}(\gamma)\le k$.
\end{theorem}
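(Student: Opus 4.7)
The plan is to proceed by induction on $k$. The base case $k=0$ is trivial, since both sides reduce to $L_0$. For the inductive step, the aim is to show that the passage from $L_k$ to $L_{k+1}$ is governed by a single algebraic identity coming from the Leibniz rule for $H=-\Delta+|x|^2$ acting on triple products, combined with the self-adjointness of $H$ and the eigenvalue equations $H e_j = \mu_j^2 e_j$.

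\textbf{Core identity.} Suppose $L_k=\int e_1\,\tilde R_{2k}(e_2,e_3,e_4)\,dx$ is a linear combination of integrals of the form
\[
J(\alpha,\beta,\gamma):=\int e_1\,f_2 f_3 f_4\,dx,\qquad f_2=P(\alpha)e_2,\ f_3=P(\beta)e_3,\ f_4=P(\gamma)e_4,
\]
with each order $\le k$ and total order $\le 2k$. I would move $H$ from $e_1$ onto the product using self-adjointness, $\mu_1^2 J = \int e_1\, H(f_2 f_3 f_4)\,dx$, and then expand
\[
H(f_2 f_3 f_4) = (Hf_2)f_3 f_4 + f_2(Hf_3)f_4 + f_2 f_3(Hf_4) - 2W(f_2,f_3,f_4),
\]
where
\[
W(f_2,f_3,f_4) := (\nabla f_2\cdot\nabla f_3)f_4+(\nabla f_2\cdot\nabla f_4)f_3+f_2(\nabla f_3\cdot\nabla f_4)+|x|^2 f_2 f_3 f_4.
\]
This identity is a direct consequence of the product rule for $-\Delta$ together with the observation that $|x|^2$ commutes with multiplication, producing a net $-2|x|^2 f_2f_3f_4$ contribution. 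Next, using $H P(\alpha_j)e_j = \mu_j^2 P(\alpha_j) e_j + [H,P(\alpha_j)]e_j$, I get
\[
(\mu_1^2-\mu_2^2-\mu_3^2-\mu_4^2)\,J(\alpha,\beta,\gamma) \;=\; \sum_{j=2}^{4}\int e_1\bigl([H,P(\alpha_j)]e_j\bigr)\!\!\!\prod_{l\in\{2,3,4\}\setminus\{j\}}\!\!\!f_l\,dx \;-\; 2\int e_1 W\,dx.
\]

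\textbf{Definition of $L_{k+1}$ and order bookkeeping.} I would then define $L_{k+1}$ to be
\[
L_{k+1} = \int e_1 W\,dx \;-\; \tfrac{1}{2}\sum_{j=2}^{4}\int e_1\bigl([H,P(\alpha_j)]e_j\bigr)\!\!\!\prod_{l\ne j}\!\!\!f_l\,dx,
\]
summed over all terms of $L_k$ with the appropriate coefficients; this gives exactly $(\mu_1^2-\mu_2^2-\mu_3^2-\mu_4^2)L_k = -2L_{k+1}$, and hence the inductive identity $L_k = \frac{-2}{\mu_1^2-\mu_2^2-\mu_3^2-\mu_4^2}L_{k+1}$, which combined with the induction hypothesis yields the formula. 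It remains to verify that $L_{k+1}$ falls under the prescribed class: for the $W$-piece, the extra factor of $\nabla$ or $|x|^2$ raises the total order by exactly $2$ and each individual order by at most $1$ (for the gradient cross-terms) or by $2$ (for the $|x|^2$ term, absorbed into a single factor); since each $\mathrm{Ord}(\alpha_j)\le k$ to start with, each output order is $\le k+1$ and total $\le 2k+2$. For the commutator terms, the computation $[H,\nabla]=2x$, $[H,x]=-2\nabla$, iterated as in the proof of Corollary~\ref{cor-bilinear-PDQD}, shows $[H,P(\alpha_j)]e_j$ is a linear combination of $P(\tilde\alpha_j)e_j$ with $\mathrm{Ord}(\tilde\alpha_j)\le \mathrm{Ord}(\alpha_j)\le k\le k+1$, so these pieces also respect the order constraints.

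\textbf{Main obstacle.} The algebraic identity is elementary; the real care is in the bookkeeping of orders, in particular in parsing the $W$-terms (whose gradient pairings $\nabla f_i\cdot\nabla f_j$ must be interpreted via contraction when $f_i,f_j$ are vector-valued intermediate objects) so that every summand cleanly becomes a product $P(\alpha')e_2\,P(\beta')e_3\,P(\gamma')e_4$ with each index of order at most $k+1$ and total order at most $2(k+1)$. Once this accounting is established for both the $W$-contribution and the commutator contributions, the induction closes and the theorem follows.
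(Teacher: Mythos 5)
Your overall strategy is the same one the paper takes: transfer $H$ from $e_1$ to the remaining factors by self-adjointness, expand $H$ on the triple product by the Leibniz rule, and use the eigenvalue relations together with the commutators $[H,P(\alpha_j)]$ to extract the factor $\mu_1^2-\mu_2^2-\mu_3^2-\mu_4^2$. The one structural difference is in how the $|x|^2$ contribution is handled. You fold $\int e_1\,|x|^2 f_2f_3f_4\,dx$ directly into the new functional $L_{k+1}$, which yields a clean one-step recurrence $L_k\to L_{k+1}$ and a single induction. The paper instead keeps the $|x|^2$-weighted integrals $L_s^{|x|^2}$ as a separate family, iterates the recurrence on both $L_s$ and $L_s^{|x|^2}$ to arrive at a double-sum expansion, and then needs a second induction to show that each $L_s^{|x|^2}$ can be pushed to the form $\int e_1\,\wt{R}_k\,dx$ with the right remaining power of $(\mu_1^2-\mu_2^2-\mu_3^2-\mu_4^2)^{-1}$. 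Your version buys a tidier and shorter proof of the same result.

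There is, however, one real slip in your order bookkeeping. You state that the $|x|^2$ piece raises a \emph{single} chosen factor's order by $2$, and then conclude ``each output order is $\le k+1$'' from the hypothesis that each input order is $\le k$. But an input factor of order $k$ would then go to order $k+2$, which violates the individual constraint $\mathrm{Ord}\le k+1$. The fix is already built into the framework you are using: split $|x|^2 f_2f_3f_4 = (xf_i)\cdot(xf_j)\,f_l$ so that two factors each gain exactly one unit of order, producing a legitimate $P(\alpha')e_2\,P(\beta')e_3\,P(\gamma')e_4$ with $P\in\{\nabla,x\}$, total order raised by $2$, and each individual order raised by at most $1$. (Alternatively one can observe that under the constraints $\sum_j\mathrm{Ord}(\alpha_j)\le 2k$, $\mathrm{Ord}(\alpha_j)\le k$, the minimal-order factor has order at most $\lfloor 2k/3\rfloor\le k-1$, so dumping both copies of $x$ onto it also keeps the bound; but the splitting is cleaner.) With this correction your order accounting is consistent and the induction closes.
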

		
Before giving proof for Theorem \ref{thm-inter4} ,we introduce some notation that will be frequently used. Let $e_1,e_2,e_3,e_4$ be four functions, for a quadrilinear functional 
\begin{equation*}
L(e_1,e_2,e_3,e_4) = \int \mathcal{L}(e_1,e_2,e_3,e_4)(x) \Ind{x},
\end{equation*}
with quadrilinear form $\mathcal{L}$ and measurable function $\rho(x)$, we define
\begin{equation*}
L^{\rho(x)}(e_1,e_2,e_3,e_4) = \int \rho(x)\mathcal{L}(e_1,e_2,e_3,e_4)(x) \Ind{x}.
\end{equation*}
		
Let $k\in\mathbb{N}$ and $P(\alpha)$ be defined in Definition \ref{def-pre-Palp}. Then, we define $\wt{Q}_k(e_2,e_3,e_4)$ as the linear combination of the form
\begin{equation*}
P(\alpha)e_2 P(\beta)e_3 P(\gamma)e_4,
\end{equation*}
with $\mathrm{Ord}(\alpha)+\mathrm{Ord}(\beta)+\mathrm{Ord}(\gamma)\le 2k$, $\mathrm{Ord}(\alpha),\mathrm{Ord}(\beta),\mathrm{Ord}(\gamma) \le k$ and each component of $\alpha,\beta,\gamma$ is identically $1$. We also define $\wt{R}_k(e_2,e_3,e_4)$ as the linear combination of $P(\alpha)e_2 P(\beta)e_3 P(\gamma)e_4$
with $\mathrm{Ord}(\alpha)+\mathrm{Ord}(\beta)+\mathrm{Ord}(\gamma)\le 2k$ and $\mathrm{Ord}(\alpha),\mathrm{Ord}(\beta),\mathrm{Ord}(\gamma) \le k$.

\begin{remark}
Indeed, to avoid introducing too many notations, the linear combination here does not necessarily need multi-index $\alpha,\beta,\gamma$ take all possible values. To be more precisely, for some $E$ which related to $\wt{R}_k$ it-self, be the subset of
\begin{equation*}
\big\{ (\alpha,\beta,\gamma)| \mathrm{Ord}(\alpha)+\mathrm{Ord}(\beta)+\mathrm{Ord}(\gamma)\le 2k,\,\, \alpha,\beta,\gamma\le k\big\},
\end{equation*}
such that
\begin{equation*}
\wt{R}_k(e_2,e_3,e_4) = \sum_{(\alpha,\beta,\gamma)\in E} P(\alpha)e_2 P(\beta)e_3 P(\gamma)e_4,
\end{equation*}
and $\wt{Q}_k$ has similar properties.
			
Thus, there is no wonder when one meet equality of the form
\begin{gather*}
\wt{Q}_k(e_2,e_3,e_4) = \wt{R}_k(e_2,e_3,e_4),\,\, \wt{R}_k(e_2,e_3,e_4)+\wt{R}_{k+1}(e_2,e_3,e_4)=\wt{R}_{k+1}(e_2,e_3,e_4),\\
\,\, \wt{R}_k(\nabla e_2,e_3,e_4) = \wt{R}_{k+1}(e_2,e_3,e_4),\,\, \wt{R}_k(xe_2,e_3,e_4) = \wt{R}_{k+1}(e_2,e_3,e_4),\,\, \text{etc.}
\end{gather*}
\end{remark}
		
\begin{proof}[{\bf Proof of Theorem \ref{thm-inter4}:}]
First, we set 
\begin{equation}
L_0 (e_1,e_2,e_3,e_4) := \int e_1(x)e_2(x)e_3(x)e_4(x) \Ind{x},
\end{equation}
with $He_j = n_j^2 e_j$ for all $j=1,2,3,4$. Basic computation gives
\begin{equation*}
\mu_1^2L_0 (e_1,e_2,e_3,e_4) = L_0 (He_1,e_2,e_3,e_4) = L_0 ((-\Delta)e_1,e_2,e_3,e_4) + L^{\vt x\vt^2}_0 (e_1,e_2,e_3,e_4).
\end{equation*}
On one hand, integration by parts gives
\begin{align}\nonumber
&L_0 ((-\Delta)e_1,e_2,e_3,e_4) = \int e_1(x)\cdot (-\Delta)(e_2e_3e_4)(x) \Ind{x}\\\nonumber
=& \int (e_1e_3e_4)(x)\cdot (-\Delta e_2)(x) \Ind{x} + \int (e_1e_2e_4)(x)\cdot (-\Delta e_3)(x) \Ind{x} \\\label{fml-fourinter-Del}
&+ \int (e_1e_2e_3)(x)\cdot (-\Delta e_4)(x) \Ind{x}-2L_1(e_1,e_2,e_3,e_4),
\end{align}
where
\begin{align*}
L_1(e_1,e_2,e_3,e_4) =& \int \nabla e_2(x)\cdot\nabla e_3(x) (e_1e_4)(x) \Ind{x}\\
& + \int \nabla e_2(x)\cdot\nabla e_4(x) (e_1e_3)(x) \Ind{x} + \int \nabla e_3(x)\cdot\nabla e_4(x) (e_1e_2)(x) \Ind{x}.
\end{align*}
On the other hand, we obtain
\begin{align}\label{fml-fourinter-x2}
&L^{\vt x\vt^2}_0(e_1,e_2,e_3,e_4) = \int (e_1e_3e_4)(x)\cdot (\vt x\vt^2e_2)(x) \Ind{x}\\\nonumber
&+ \int (e_1e_2e_4)(x)\cdot (\vt x\vt^2e_3)(x) \Ind{x} + \int (e_1e_2e_3)(x)\cdot (\vt x\vt^2e_4)(x) \Ind{x}- 2L^{\vt x\vt^2}_0(e_1,e_2,e_3,e_4). 
\end{align}
Recall that $L^{\vt x\vt^2}_0(e_1,e_2,e_3,e_4)$ has the form
\begin{equation*}
L^{\vt x\vt^2}_0(e_1,e_2,e_3,e_4) = \int \vt x\vt^2 (e_1e_2e_3e_4)(x) \Ind{x}.
\end{equation*}
Combining \eqref{fml-fourinter-Del} with \eqref{fml-bilinear-res}, then $\mu_1^2L_0(e_1,e_2,e_3,e_4)$ can be rewritten as 
\begin{equation}\label{fml-fourinter-iter-1}
L_0(e_1,e_2,e_3,e_4) =  \frac{-2}{\mu_1^2-\mu_2^2-\mu_3^2-\mu_4^2}\Lf(L_1(e_1,e_2,e_3,e_4) + L^{\vt x\vt^2}_0(e_1,e_2,e_3,e_4)\Rt).
\end{equation}
Since both $L_1(e_1,e_2,e_3,e_4)$ and $L^{\vt x\vt^2}_0(e_1,e_2,e_3,e_4)$ can be absorbed in 
\begin{equation*}
\int e_1(x) \wt{R}_1(e_2,e_3,e_4)(x) \Ind{x},
\end{equation*}
we have proved this theorem for $k=1$.
			
As for $k\ge2$, things are quite different. We repeat the following process $L_1(e_1,e_2,e_3,e_4)$ first:
\begin{align*}
\mu_1^2L_1 (e_1,e_2,e_3,e_4) &= L_1 (He_1,e_2,e_3,e_4) \\
&= L_0(He_1,\nabla e_2,\nabla e_3,e_4) + L_0(He_1,\nabla e_2,e_3,\nabla e_4) + L_0(He_1,e_2,\nabla e_3,\nabla e_4).
\end{align*}
We focus on the first term of $L_1(He_1,e_2,e_3,e_4)$:
\begin{align*}
L_0(He_1,\nabla e_2,\nabla e_3,e_4) &=\int (e_4He_1)(x)(\nabla e_2\cdot\nabla e_3)(x) \Ind{x} \\
&= \int e_4(x)(-\Delta e_1)(x)(\nabla e_2\cdot\nabla e_3)(x) \Ind{x} + \int e_4(x)(\vt x\vt^2e_1)(x)(\nabla e_2\cdot\nabla e_3)(x) \Ind{x}.
\end{align*}
			
On one hand, we apply integration by parts to obtain
\begin{equation}\label{fml-fourinter-R2-mark}
\begin{aligned}
&\int (-\Delta)e_1(x)(\nabla e_2\cdot\nabla e_3)(x)e_4(x) \Ind{x} \\
=& L_0(e_1,-\Delta\nabla e_2,\nabla e_3,e_4)+L_0(e_1,\nabla e_2,-\Delta\nabla e_3,e_4)+L_0(e_1,\nabla e_2,\nabla e_3,-\Delta e_4) \\
&+ \int e_1(x) \wt{Q}_2(e_2,e_3,e_4)(x) \Ind{x},
\end{aligned}
\end{equation}
			
On the other hand, we also get
\begin{equation}\label{fml-fourinter-R2-mark-x2}
\begin{aligned}
L_0^{\vt x\vt^2}(e_1,\nabla e_2,\nabla e_3,e_4) =& L_0(e_1,\vt x\vt^2\nabla e_2,\nabla e_3,e_4)+L_0(e_1,\nabla e_2,\vt x\vt^2\nabla e_3,e_4)\\		&+L_0(e_1,\nabla e_2,\nabla e_3,\vt x\vt^2e_4)-2 L_0^{\vt x\vt^2}(e_1,\nabla e_2,\nabla e_3,e_4).
\end{aligned}
\end{equation}
From \eqref{fml-fourinter-R2-mark} and \eqref{fml-fourinter-R2-mark-x2}, we obtain
\begin{align}\label{fml-fourinter-L1-t1}
&L_0(He_1,\nabla e_2,\nabla e_3,e_4)\\\nonumber
 =& \frac{-2}{\mu_1^2-\mu_2^2-\mu_3^2-\mu_4^2}\Lf( -\frac{1}{2}\int e_1(x) \wt{Q}_2(e_2,e_3,e_4)(x) \Ind{x}+L_0^{\vt x\vt^2}(e_1,\nabla e_2,\nabla e_3,e_4) \Rt).
\end{align}
			
Then, we handle other two terms in $L_1(e_1,e_2,e_3,e_4)$ similarly and also obtain identities in the form of \eqref{fml-fourinter-L1-t1}. Therefore, we find that
\begin{equation}\label{fml-fourinter-L1toL2}
L_1(e_1,e_2,e_3,e_4) = \frac{-2}{\mu_1^2-\mu_2^2-\mu_3^2-\mu_4^2} \big( L_2(e_1,e_2,e_3,e_4) + L_1^{\vt x\vt^2}(e_1,e_2,e_3,e_4) \big),
\end{equation} 
where
\begin{equation*}
L_2(e_1,e_2,e_3,e_4) := -\frac{1}{2} \int e_1(x)\wt{Q}_2(e_2,e_3,e_4)(x) \Ind{x}.
\end{equation*}
			
Substituting the above result into \eqref{fml-fourinter-iter-1}, we conclude that
\begin{align*}
L_0(e_1,e_2,e_3,e_4) =& \frac{(-2)^2}{(\mu_1^2-\mu_2^2-\mu_3^2-\mu_4^2)^2} \Lf(L_2(e_1,e_2,e_3,e_4) + L_1^{\vt x\vt^2}(e_1,e_2,e_3,e_4)\Rt) \\
&+ \frac{-2}{\mu_1^2-\mu_2^2-\mu_3^2-\mu_4^2}L_0^{\vt x\vt^2}(e_1,e_2,e_3,e_4),
\end{align*}
			
Similar to the argument of obtaining \eqref{fml-fourinter-L1toL2}, we have for $s\in\mathbb{N}$
\begin{equation}\label{fml-fourinter-Ls-1toLs}
L_{s-1}(e_1,e_2,e_3,e_4) = \frac{-2}{\mu_1^2-\mu_2^2-\mu_3^2-\mu_4^2} \big( L_{s}(e_1,e_2,e_3,e_4) + L_{s}^{\vt x\vt^2}(e_1,e_2,e_3,e_4) \big),
\end{equation}
where $L_{s}(e_1,e_2,e_3,e_4)$ is the form
\begin{equation*}
L_s(e_1,e_2,e_3,e_4) = \int e_1(x) \wt{Q}_{s}(e_2,e_3,e_4)(x) \Ind{x}.
\end{equation*}
			
Thus, substituting \eqref{fml-fourinter-Ls-1toLs} into \eqref{fml-fourinter-iter-1}, we get for all $k\in\mathbb{N}$ 
\begin{equation}\label{fml-fourinter-iter-ell}
\begin{aligned}
L_0(e_1,e_2,e_3,e_4) &= \frac{(-2)^{k}}{(\mu_1^2-\mu_2^2-\mu_3^2-\mu_4^2)^{k}} \big(L_{k}(e_1,e_2,e_3,e_4)+L^{\vt x\vt^2}_{k-1}(e_1,e_2,e_3,e_4)\big)\\
&\hspace{2ex}+ \sum_{s = 0}^{k-2}\frac{(-2)^{s+1}}{(\mu_1^2-\mu_2^2-\mu_3^2-\mu_4^2)^{s+1}}L_s^{\vt x\vt^2}(e_1,e_2,e_3,e_4),
\end{aligned}
\end{equation}
by induction.
			
It remains to show that for $s,k\in\mathbb{N}$ and $s\ge k$ such that
\begin{equation}\label{fml-fourinter-Ls-Lk}
L_s^{\vt x\vt^2}(e_1,e_2,e_3,e_4) = \frac{(-2)^{k-s-1}}{(\mu_1^2-\mu_2^2-\mu_3^2-\mu_4^2)^{k-s-1}}\int e_1(x) \wt{R}_{k}(e_2,e_3,e_4)(x) \Ind{x}.
\end{equation}
			
We first prove the case when $s=0$ and $k=1$. Noting that
\begin{equation*}
\begin{aligned}
\mu_1^2 L_0^{\vt x\vt^2}(e_1,e_2,e_3,e_4) = L_0^{\vt x\vt^2}(He_1,e_2,e_3,e_4)= L_0^{\vt x\vt^2}((-\Delta)e_1,e_2,e_3,e_4) + L_0^{\vt x\vt^4}(e_1,e_2,e_3,e_4),
\end{aligned}
\end{equation*}
and
\begin{equation*}
-\Delta(\vt x\vt^2e_2e_3e_4) = \vt x\vt^2\cdot-\Delta(e_2e_3e_4) -2 \Lf( x\cdot\nabla e_2\cdot e_3e_4 + x\cdot\nabla e_3\cdot e_2e_4+x\cdot\nabla e_4\cdot e_2e_3 \Rt).
\end{equation*}
we arrive at
\begin{align}\label{fml-fourinter-x2-Del}
&L_0^{\vt x\vt^2}((-\Delta)e_1,e_2,e_3,e_4) \\\nonumber
=& L_0^{\vt x\vt^2}(e_1,(-\Delta)e_2,e_3,e_4) + L_0^{\vt x\vt^2}((e_1,e_2,(-\Delta)e_3,e_4) + L_0^{\vt x\vt^2}(e_1,e_2,e_3,(-\Delta)e_4) \\\nonumber
&- 2d L_0(e_1,e_2,e_3,e_4) - 2\big(L_0(e_1,x\cdot\nabla e_2,e_3,e_4) + L_0(e_1,e_2,x\cdot\nabla e_3,e_4) + L_0(e_1,e_2,e_3,x\cdot\nabla e_4)\big)\\\nonumber
& - 2L_1^{\vt x\vt^2}(e_1,e_2,e_3,e_4).
\end{align}
We also note that
\begin{align}\nonumber
L_0^{\vt x\vt^4}(e_1,e_2,e_3,e_4)=& L_0^{\vt x\vt^2}(e_1,\vt x\vt^2e_2,e_3,e_4) + L_0^{\vt x\vt^2}(e_1,e_2,\vt x\vt^2e_3,e_4) + L_0^{\vt x\vt^2}(e_1,e_2,e_3,\vt x\vt^2e_4)\\\label{fml-fourinter-x2-x2}
& -2L_0^{\vt x\vt^4}(e_1,e_2,e_3,e_4).
\end{align}
From \eqref{fml-fourinter-x2-Del} and \eqref{fml-fourinter-x2-x2}, we get that
\begin{equation*}
\begin{aligned}
&L_0^{\vt x\vt^2}(e_1,e_2,e_3,e_4) \\
=&\frac{-2}{\mu_1^2-\mu_2^2-\mu_3^2-\mu_4^2}\big(L_0(e_1,x\cdot\nabla e_2,e_3,e_4) + L_0(e_1,e_2,x\cdot\nabla e_3,e_4) + L_0(e_1,e_2,e_3,x\cdot\nabla e_4)\big)\\
&+\frac{-2}{\mu_1^2-\mu_2^2-\mu_3^2-\mu_4^2} \big(L_1^{\vt x\vt^2}(e_1,e_2,e_3,e_4) +L_0^{\vt x\vt^4}(e_1,e_2,e_3,e_4)\big).
\end{aligned}
\end{equation*} 
			
Since 
\begin{align*}
&L_0(e_1,x\cdot\nabla e_2,e_3,e_4) + L_0(e_1,e_2,x\cdot\nabla e_3,e_4) + L_0(e_1,e_2,e_3,x\cdot\nabla e_4)\\
&\hspace{8ex}+L_1^{\vt x\vt^2}(e_1,e_2,e_3,e_4) +L_0^{\vt x\vt^4}(e_1,e_2,e_3,e_4),
\end{align*}
can be absorbed into $\wt{R}_k(e_2,e_3,e_4)$. Thus have proved \eqref{fml-fourinter-Ls-Lk} for $s=0$ and $k=1$.
			
Next, we consider the case that $s=0$ and $k\in \mathbb{N}$. Indeed, we rewrite
\begin{align*}
\mu_1^2 \int e_1(x) \wt{R}_{k}(e_2,e_3,e_4)(x) \Ind{x} &= \int He_1(x) \wt{R}_{k}(e_2,e_3,e_4)(x) \Ind{x} \\
&= \int (-\Delta e_1(x)) \wt{R}_{k}(e_2,e_3,e_4)(x) \Ind{x} + \int \vt x\vt^2e_1(x) \wt{R}_{k}(e_2,e_3,e_4)(x) \Ind{x}.
\end{align*}
Similar with we treat $L_0^{\vt x\vt^2}(e_1,e_2,e_3,e_4)$ and $L_1(e_1,e_2,e_3,e_4)$ above and by Proposition \ref{Prop-pre-BPT}, the following holds
\begin{equation*}
\int e_1(x) \wt{R}_{k}(e_2,e_3,e_4)(x) \Ind{x} = \frac{-2}{\mu_1^2-\mu_2^2-\mu_3^2-\mu_4^2} \int e_1(x) \wt{R}_{k+1}(e_2,e_3,e_4)(x) \Ind{x}.
\end{equation*}
So far, we get \eqref{fml-fourinter-Ls-Lk} for $s=0$ and $k \in \mathbb{N}$.
			
It remains to prove for the case $s,k\in \mathbb{N}$ and $s\le k$. Recall that $L_s(e_1,e_2,e_3,e_4)$ has the form
\begin{equation*}
L_s(e_1,e_2,e_3,e_4) = \int e_1(x) \wt{Q}_s(e_2,e_3,e_4)\Ind{x}.
\end{equation*}
Similarly, we get that
\begin{align*}
\mu_1^2 \int e_1(x) \wt{Q}_{s}(e_2,e_3,e_4)(x) \Ind{x} &= \int He_1(x) \wt{Q}_{s}(e_2,e_3,e_4)(x) \Ind{x} \\
&= \int (-\Delta e_1(x)) \wt{Q}_{s}(e_2,e_3,e_4)(x) \Ind{x} + \int \vt x\vt^2e_1(x) \wt{Q}_{s}(e_2,e_3,e_4)(x) \Ind{x}.
\end{align*}
The structure of $\wt{Q}_{s}$ is simpler that that of $\wt{R}_{s}$, so we have
\begin{equation*}
\int e_1(x) \wt{Q}_{k}(e_2,e_3,e_4)(x) \Ind{x} = \frac{-2}{\mu_1^2-\mu_2^2-\mu_3^2-\mu_4^2} \int e_1(x) \wt{R}_{k+1}(e_2,e_3,e_4)(x) \Ind{x},
\end{equation*}
for all $k\in \mathbb{N}$. Repeating above operation for $k-s$ times, we obtain \eqref{fml-fourinter-Ls-Lk} for all $s,k\in \mathbb{N}$ and $s\le k$.
			
Thus, the proof of this theorem is complete.
\end{proof}

%%%%%%%%%%%%%%%%%%%%%%%%%%%%%%%%%%%%%%%%%%%%%%%%%%%%%%%%%%%%%%

%%%%%%%%%%%%%%%%%%%%%%%%%%%%%%%%%%%%%%%%%%%%%%%%%%%%%%%%%%%%%%

\section{Modified energy and polynomial growth of higher-order Sobolev norm}\label{Sec-EI-Growth}
		
In this section, we will first show the decay of modified energy and then obtain the polynomial growth of higher-order Sobolev norm.
		
Since $Iu$ is not the solution to \eqref{fml-NLS}, the following Hamilton energy
\begin{align*}
E(Iu)=\int_{\R^d}\Big(\frac{1}{2}|\nabla Iu(t,x)|^2+\frac12|x|^2|Iu(t,x)|^2+\frac{1}{4}|Iu(t,x)|^4\Big)dx
\end{align*}
is not conserved. Indeed, we can show the increment of $E(Iu)$ is almost conserved. 
		
\subsection{Decay of the modified energy}\label{subsec:modifyenergy}
We next prove the almost conservation of the modified energy:
		
\begin{proposition}\label{prop-EI}
Let $s_0=\frac{3}{2}$ when $d=2$ and $s_0=1$ when $d=3$. Suppose that  if $u(t):\;[0,\delta]\times\R^d\to\C$ is a solution to $\eqref{fml-NLS}$ and then we have
\begin{align*}
\left|E(Iu(t))-E(Iu_0)\right|\lesssim N^{-s_0+},
\end{align*}
for all $t\in[0,\delta]$, where $\delta$ is the same as in Proposition \ref{prop-local}.
\end{proposition}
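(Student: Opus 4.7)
The plan is to differentiate $E(Iu)(t)$ in time, substitute $i\partial_t Iu = HIu + I(|u|^2u)$ from \eqref{fml-I-system}, and integrate over $[0,t]$. As in \eqref{fml-Intro-Edt}, the resulting expression splits as $E(Iu)(t) - E(Iu)(0) = I_1 + I_2$, where $I_1$ carries the cubic factor $\overline{\pi_{\mu_1}(HIu)}$ and $I_2$ the quintic factor $\overline{\pi_{\mu_1}(I(|u|^2u))}$, both weighted by the commutator multiplier $1 - m(\mu_1)/[m(\mu_2)m(\mu_3)m(\mu_4)]$. After dyadically localizing each projection $\pi_{\mu_j}$ at scale $N_j$ with $\mu_j\sim N_j$, it suffices to prove the per-piece estimate $(N_1 N_2 N_3 N_4)^{0-}\, N^{-(5-d)/2+}$, which is summable in the dyadic parameters and delivers the target $N^{-s_0+}$.

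For $I_1$, one may order $N_1 \ge N_2 \ge N_3 \ge N_4$ by symmetry. When $N_1 \gg N_2$, Theorem \ref{thm-ortho} forces super-polynomial decay in $N_1$, so only the balanced regimes remain. When all four frequencies are comparable, the multiplier is bounded by $1$ and one distributes the factors into two pairs, using Corollary \ref{cor-bilinear-PDQD} (together with the Bourgain-space bound $\|Iu\|_{X^{1,1/2+}}\lesssim \|Iu_0\|_{\mathcal{H}^1}$ from Proposition \ref{prop-local}) to close the estimate. The decisive case is the high-high-low-low regime $N_1\sim N_2 \gtrsim N \gg N_3, N_4$, in which bounding $|1-(\mu_1/\mu_2)^{s-1}|$ by $1$ is wasteful because it discards the vanishing of the multiplier along the near-diagonal $\mu_1\sim \mu_2$.

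To recover this vanishing I would follow the strategy outlined in the introduction. Apply Theorem \ref{thm-inter4} with $k=3$ to rewrite
\begin{equation*}
\int e_1 e_2 e_3 e_4\, dx = \frac{(-2)^3}{(\mu_1^2-\mu_2^2-\mu_3^2-\mu_4^2)^3}\, L_3(e_1,e_2,e_3,e_4),
\end{equation*}
where $L_3 = \int e_1\cdot \widetilde{R}_6(e_2,e_3,e_4)\,dx$ and $\widetilde{R}_6$ distributes at most six derivatives across $e_2,e_3,e_4$ with at most three per factor. Partitioning $[N_1,2N_1)$ and $[N_2,2N_2)$ into sub-intervals of width $N_3$ produces the decomposition into $S_1,S_2,S_3$ of the introduction. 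On $S_3$ only $O(1)$ pairs occur, and the bound $|1-(\mu_1/\mu_2)^{s-1}|\lesssim N_3/N_2$ together with Corollary \ref{cor-bilinear-PDQD} suffices. On $S_1$ and $S_2$ the denominator obeys $|\mu_1^2-\mu_2^2-\mu_3^2-\mu_4^2|\gtrsim N_2 N_3 |a-b|$; the resulting prefactor multiplied by $1-m(\mu_1)/m(\mu_2)$ defines a Coifman-Meyer symbol fitting the hypotheses of Theorem \ref{thm-multi}, which strips the symbol at the cost of a harmless phase modulation. Pairing the four factors and invoking Corollary \ref{cor-bilinear-PDQD} (or Lemma \ref{lem:BilBourg}) then yields the gain $N_3/N_2$ that compensates the derivative loss inside $\widetilde{R}_6$ and closes the per-piece estimate.

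For $I_2$ the analysis is parallel but strictly easier: Theorem \ref{thm-ortho} again eliminates the unbalanced regimes, while in the balanced ones the extra factor $I(|u|^2u)$ is controlled in $X^{1,-b'}$ by Lemma \ref{Non-Est} and provides additional smallness beyond what $I_1$ enjoys. Summing all dyadic pieces gives $|E(Iu)(t)-E(Iu_0)|\lesssim N^{-s_0+}$, which is the proposition. The principal obstacle is the high-high-low-low case of $I_1$: retrieving the near-cancellation of the multiplier at $\mu_1\sim\mu_2$ while simultaneously controlling the derivatives produced by the product identity requires the simultaneous use of Theorem \ref{thm-inter4}, Theorem \ref{thm-multi}, and the $\varepsilon$-loss-free bilinear Strichartz estimates of Theorem \ref{thm-bilinear} and Corollary \ref{cor-bilinear-PDQD}.
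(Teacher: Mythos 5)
Your proposal follows the paper's proof essentially step for step: differentiate $E(Iu)$, split into $I_1$ (the $HIu$ term) and $I_2$ (the $I(|u|^2u)$ term), dyadically localize, reduce to a per-piece estimate of size $(N_1N_2N_3N_4)^{0-}N^{-(5-d)/2+}$, dismiss the highly unbalanced regime via Lemma \ref{lem:orteigde}/Theorem \ref{thm-ortho}, and attack the decisive high-high-low-low regime by applying Theorem \ref{thm-inter4} with $k=3$, partitioning $[N_1,2N_1)\times[N_2,2N_2)$ into width-$N_3$ boxes and splitting into $S_1,S_2,S_3$, invoking the spectral multiplier theorem (Theorem \ref{thm-multi}) to peel off the refined symbol with the $N_3/N_2$ gain, and closing with the derivative- and weight-loaded bilinear Strichartz estimate of Corollary \ref{cor-bilinear-PDQD} / Lemma \ref{lem:BilBourg}. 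This is exactly the paper's strategy, and the identification of $k=3$ as the right power to ensure summability over $|a-b|$ is also the paper's choice.

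One place where your sketch is imprecise and would need repair is the treatment of $I_2$. You assert that in the balanced regime the factor $I(|u|^2u)$ is "controlled in $X^{1,-b'}$ by Lemma \ref{Non-Est} and provides additional smallness." But Lemma \ref{Non-Est} only yields $\Vt I(|u|^2u)\Vt_{X^{1,-b'}}\lesssim\Vt Iu\Vt_{X^{1,b}}^3$, which carries no power of $N$; the trilinear estimate alone gives no decay. The $N^{-s_0+}$ gain in the paper's $I_2$ estimate comes from a different mechanism: the sextilinear integral is expanded into dyadic pieces, the unbalanced cases are killed by almost orthogonality, and in the balanced cases the multiplier bounds of the form $m(k)k^\alpha\gtrsim N^\alpha$ (for $k\gtrsim N$) together with H\"older and $L^p$ Strichartz estimates produce the required negative power of $N$ and summability. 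Your plan for $I_2$, as written, would leave this essential power of $N$ unaccounted for; everything else in the proposal is sound and matches the paper.
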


\begin{proof}
First, we take the time derivative to the modified energy $E(Iu(t))$:
\begin{align*}
\frac{d}{dt}E(Iu(t))&=\Re\int_{\R^d}\overline{Iu_t}(HIu+|Iu|^2Iu)dx\\
&=\Re\int_{\R^d}\overline{Iu_t} \big(-i\partial_t(Iu)+H Iu+|Iu|^2Iu\big)dx\\
&=\Re\int_{\R^d}\overline{Iu_t} \big(-I(|u|^2u)+|Iu|^2Iu\big)dx.
\end{align*}
Decomposing the function $u(t,x)=\sum\limits_{\mu}{\pi}_{\mu}u(t,x)$, where ${\pi}_\mu$ is the projection operator on  eigen-space associated to $H$. By the Newton-Leibniz formula, we have
\begin{align*}
&E(Iu(t))-E(Iu(0))\\
=&\Re\sum_{\mu_i}\int_{0}^t\int_{\R^d}m(\mu_1)\overline{{\pi_{\mu_1}}u_t}\big(m(\mu_2)m(\mu_3)m(\mu_4)-I\big){\pi}_{\mu_2}u\overline{{\pi}_{\mu_3}u}{\pi}_{\mu_4}udxdt,
\end{align*}
where $\mu_k$ is the rearranged eigenvalue associated to $\sqrt H$. Using the fact that $I$ is a self-adjoint multiplier operator, we have
\begin{align*}
&E(Iu(t))-E(Iu(0))\\
=&\Re\sum_{\mu_i}\int_{0}^t\int_{\R^d}\Big(1-\frac{m(\mu_1)}{m(\mu_2)m(\mu_3)m(\mu_4)}\Big){\pi}_{\mu_1}(Iu_t){\pi}_{\mu_2}(Iu)\overline{{\pi}_{\mu_3}(Iu)}{\pi}_{\mu_4}(Iu)dxdt.
\end{align*}
Then substituting the equation of $Iu$, then we can rewrite 
\begin{align*}
&E(Iu(t))-E(Iu(0))\\
=&\Re i\sum_{\mu_i}\int_{0}^t\int_{\R^d}\Big(1-\frac{m(\mu_1)}{m(\mu_2)m(\mu_3)m(\mu_4)}\Big)\overline{{\pi}_{\mu_1}(H Iu)}{\pi}_{\mu_2}(Iu)\overline{{\pi}_{\mu_3}(Iu)}{\pi}_{\mu_4}(Iu)dxdt\\
&+\Re i\sum_{\mu_i}\int_{0}^t\int_{\R^d}\Big(1-\frac{m(\mu_1)}{m(\mu_2)m(\mu_3)m(\mu_4)}\Big)\overline{{\pi}_{\mu_1}(I(|u|^2u))}{\pi}_{\mu_2}(Iu)\overline{{\pi}_{\mu_3}(Iu)}{\pi}_{\mu_4}(Iu)dxdt\\
\stackrel{\triangle}{=}&\Re i(I_1+I_2).
\end{align*}
			
\noindent\textbf{The bound of term $\mathbf{I_1}$}. For the term $I_1$, we expect to show that
\begin{align*}
|I_1|\lesssim C\big(\|Iu\|_{X^{1,\frac{1}{2}+}}\big) N^{-s_0+}.
\end{align*}
To achieve this, we split $u$ into the sum of  dyadic pieces $u(t,x)=\sum\limits_{N\in2^{\Bbb N}}u_N$ with $u_N={\Delta}_Nu$. Substituting the decomposition in $I_1$, we can rewrite it as the sum of the frequency-localized terms at different scales $N$. From the Bernstein estimate, we have
\begin{align*}
\big\|H Iu\big\|_{X^{-1,\frac{1}{2}+}}\lesssim \|Iu\|_{X^{1,\frac{1}{2}+}}.
\end{align*}
It remains to show that
\begin{align}\nonumber
&\left|\sum_{\mu_i\sim N_i}\int_{0}^t\int_{\R^d}\big(1-\frac{m(\mu_1)}{m(\mu_2)m(\mu_3)m(\mu_4)}\big)\pi_{\mu_1}(v_1)\pi_{\mu_2}(v_2)\pi_{\mu_3}(v_3)\pi_{\mu_4}(v_4)dxdt\right|\\\label{fml-almost-reduction}
\lesssim& N^{-s_0+}(N_1N_2N_3N_4)^{0-}\|v_1\|_{X^{-1,\frac{1}{2}+}}\prod_{i=2}^{4}\|v_i\|_{X^{1,\frac{1}{2}+}},
\end{align}
where $v_j$ is frequency localized around $N_j$. Without loss of generality, we assume that 
\begin{align*}
1\leq N_2\leq N_3\leq N_4.
\end{align*}
Further, we can divide  into the following four cases:
\begin{enumerate}
\item[$\circ$]  Case One: All frequencies are low: $N_2\ll N$(trivial case).
\item[$\circ$]  Case Two: The first two frequencies are higher than $N$: $N_2\gtrsim N\gg N_3\geq N_4$.
\item[$\circ$]  Case Three: Only $N_4$ is lower than $N$: $N_2\sim N_3\gtrsim N$.
\item[$\circ$]  Case Four:  All frequencies are higher than $N$: $N_2\gg N_3\gtrsim N$.
\end{enumerate}
Indeed, by the following proposition, we can set $N_1\lesssim N_2$ without loss of generality. We will estimate the term $I_1$  via the case-by-case analysis.

\textbf{Case One}: Notice that the multiplier is vanished since all frequency are lower than $N$. Then we finish the proof of Case One.
			
\textbf{Case Two}: First, we assume that  $N_1$ and $N_2$ are comparable. Indeed, if $N_1\geq CN_2$ or $N_2\geq CN_1$ for $C\gg1$, we can establish the following fast decay estimate.

\begin{lemma}[Fast decay estimate]
There exists $C>0$ such that if $N_1\geq CN_2$, then for any $r>0$, there holds
\begin{align*}
\mbox{(LHS) of }\eqref{fml-almost-reduction}\lesssim N_1^{-r}\prod_{i=1}^{4}\|v_i\|_{X^{0,\frac{1}{2}+}([0,\delta]\times\R^d)}.
\end{align*}
\end{lemma}

\begin{proof}
Using  Minkowski's inequality and Lemma \ref{lem:orteigde}, we obtain
\begin{align*}
\mbox{(LHS) of }\eqref{fml-almost-reduction}&\leq\sum_{\mu_i\sim N_i}\left|\left(1-\frac{m(\mu_1)}{m(\mu_2)m(\mu_3)m(\mu_4)}\right)\int_0^t\int_{\R^d}\overline{\pi_{\mu_1}({v_1})}\pi_{\mu_2}({v_2})\overline{\pi_{\mu_3}({v_3})}\pi_{\mu_4}({v_4})dxdt\right|\\
&\lesssim\sum_{\mu_i\sim N_i}\mu_1^{-r}A(\mu_1,\mu_2,\mu_3,\mu_4)\int_0^t\prod_{j=1}^{4}\|\pi_{\mu_i}{v_i}(t)\|_{L^2(\R^d)}dt,
\end{align*}
where the multiplier $A$ enjoys the bound
\begin{align*}
A(\mu_1,\mu_2,\mu_3,\mu_4)=\left|1-\frac{m(\mu_1)}{m(\mu_2)m(\mu_3)m(\mu_4)}\right|\lesssim \mu_1^2.
\end{align*}
It's easy to check that
\begin{align*}
\#S:=\#\big\{\mu:N_i\leqslant\sqrt{\mu}\leqslant 2N_i,\; \mu\mbox{ is the eigenvalue of }H\big\}\lesssim N_i^10.
\end{align*}
Using Cauchy-Schwarz's inequality, we have
\begin{align*}
\sum_{\mu_i\sim N_i}\big\|\pi_{ \mu_i}{v_i}(t)\big\|_{L^2(\R^d)}\lesssim(\#S) ^\frac{1}{2}\|v_i\|_{L^2}\lesssim N_i^5\|v_i\|_{L^2}.
\end{align*}
Putting these estimates together,  we obtain
\begin{align*}
&\operatorname{(LHS)\, of }\eqref{fml-almost-reduction}\\
\lesssim&\frac{1}{N_1^{r-2}}\int_{0}^t\prod_{i=1}^{4}N_i^5\|v_i\|_{L^2(\R^d)}dt\lesssim N_1^{22-r}\prod_{i=1}^{4}\|v_i\|_{X^{0,\frac{1}{2}+}}.
\end{align*}
For sufficient large $r$, we have that 
\begin{align*}
\mbox{(LHS) of }\eqref{fml-almost-reduction}\lesssim N_1^{-r}\prod_{i=1}^4\|v_i\|_{X^{0,\frac{1}{2}+}}.	
\end{align*}
\end{proof}

With this lemma in hand, the Bernstein inequality yields the desired estimate if $N_1\gg N_2$ or $N_1\ll N_2$. Thus, we assume that  $N_1$ and $N_2$ are comparable.  We assume the order relation $N_1\ge N_2\gtrsim N \gg N_3,N_4$.   We will prove that
\begin{equation}\label{fml-EI-maingoal}
\begin{aligned}
&\Lf\vt \sum_{\mu_i\sim N_i}\int_0^T\int \Lf[ 1-\frac{m(\mu_1)}{m(\mu_2)m(\mu_3)m_N(\mu_4)}\Rt] \ovr{\pi_{\mu_1}(v_1)}\pi_{\mu_2}(v_2)\ovr{\pi_{\mu_3}(v_3)}\pi_{\mu_4}(v_4) \Ind{x}\Ind{t} \Rt\vt\\
\lesssim& N^{-\frac{3}{2}+} (N_1N_2N_3N_4)^{0-} \Vt v_1\Vt_{\TsnCHself{2}{-1}{[0,T]}} \prod_{i=2}^4\Vt v_i\Vt_{\TsnCHself{2}{1}{[0,T]}}.
\end{aligned}
\end{equation}
			
Then there exists $N_1 = N_2 + R N_3$ for some $R\in\mathbb{N}$. Let $a,b\in\mathbb{N}$ and we set 
\begin{equation*}
I_a := [N_2+a N_3,N_2+(a+1)N_3),\,\, I_b := [N_2+b N_3,N_2+(b+1)N_3).
\end{equation*}
Then, we divide interval $[N_1,2N_1)$ and $[N_2,2N_2)$ as follows 
\begin{equation}
[N_1,2N_1) = \bigcup_{a=R}^{R+J-1} I_a,\,\, [N_2,2N_2) = \bigcup_{b=0}^{K} I_b,
\end{equation}
where $J \le N_1N_3^{-1}$ and $K \le N_2N_3^{-1}$. Now, we classify them according to the relationship of the position of $a,b$:
\begin{align*}
&S_1 := \{ (I_a,I_b):\; a-b > 8,\,\, {I_a \subset [N_1,2N_1),\,\,I_b \subset [N_2,2N_2)} \},\\
&S_2 := \{ (I_a,I_b):\; a-b < -8,\,\, I_a \subset  [N_1,2N_1),\,\,I_b \subset  [N_2,2N_2) \},\\
&S_3 := \{ (I_a,I_b):\; \vt a-b\vt \le 8,\,\, I_a \subset  [N_1,2N_1),\,\,I_b \subset  [N_2,2N_2) \}.
\end{align*}
			
We consider $S_1$ first. Applying Theorem \ref{thm-inter4}, we get that 
\begin{align*}
&\sum_{S_1}\int_0^T\int \Lf[ 1-\frac{m(\mu_1)}{m(\mu_2)m(\mu_3)m(\mu_4)}\Rt] \ovr{\pi_{\mu_1}(v_1)}\pi_{\mu_2}(v_2)\ovr{\pi_{\mu_3}(v_3)}\pi_{\mu_4}(v_4) \Ind{x}\Ind{t}\\
=& \sum_{S_1}  \Lf[ 1-\frac{m(\mu_1)}{m(\mu_2)m(\mu_3)m(\mu_4)}\Rt] \frac{(-2)^{\ell}}{(\mu_1^2-\mu_2^2-\mu_3^2-\mu_4^2)^{\ell}}\\
&\times \int_0^T\int \ovr{\pi_{\mu_1}v_1}\times\wt{R}_{\ell}(\pi_{\mu_2}v_2,\ovr{\pi_{\mu_3}v_3},\pi_{\mu_4}v_4) \Ind{x}\Ind{t}.
\end{align*}
Recall that $\wt{R}_{\ell}(\pi_{\mu_2}v_2,\ovr{\pi_{\mu_3}v_3},\pi_{\mu_4}v_4)$ is the linear combination of the form
\begin{equation*}
P(\alpha)\pi_{\mu_2}v_2P(\beta)\ovr{\pi_{\mu_3}v_3}P(\gamma)\pi_{\mu_4}v_4,
\end{equation*}
where $P \in \{x,\nabla\}$ and $\mathrm{Ord}(\alpha)+\mathrm{Ord}(\beta)+\mathrm{Ord}(\gamma)\le 2\ell$. It is sufficient to estimate 
\begin{equation}\label{fml-EI-Case2-PPP}
\begin{aligned}
&\Lf[ 1-\frac{m(\mu_1)}{m(\mu_2)m(\mu_3)m(\mu_4)}\Rt]\frac{(-2)^{\ell}}{(\mu_1^2-\mu_2^2-\mu_3^2-\mu_4^2)^{\ell}} \\
&\times\sum_{S_1} \int_0^T\int \ovr{\pi_{\mu_1}(v_1)}\big[P(\alpha)\pi_{\mu_2}v_2P(\beta)\ovr{\pi_{\mu_3}v_3}P(\gamma)\pi_{\mu_4}v_4\big] \Ind{x}\Ind{t}.
\end{aligned}
\end{equation}
			
Then, we denote the multiplier as
\begin{equation*}
\wt{m}(\nof) := \Lf[ 1-\frac{m(\mu_1)}{m(\mu_2)m(\mu_3)m(\mu_4)}\Rt]\frac{(-2)^{\ell}}{(\mu_1^2-\mu_2^2-\mu_3^2-\mu_4^2)^{\ell}}.
\end{equation*}
Since the linear combination of \eqref{fml-EI-Case2-PPP} contains finite elements, we only need to estimate
\begin{equation}\label{fml-EI-Case2-piece}
\sum_{S_1}\wt{m}(\mu_1,\mu_2,\mu_3,\mu_4) \int_0^T\int \ovr{\pi_{\mu_1}(v_1)}\big[P(\alpha)\pi_{\mu_2}v_2P(\beta)\ovr{\pi_{\mu_3}v_3}P(\gamma)\pi_{\mu_4}v_4\big] \Ind{x}\Ind{t}.
\end{equation}
			
By change of variable
\begin{equation}\label{fml-EI-variablechange}
\begin{gathered}
\wt{\mu}_1 := { \frac{\mu_1-N_2-a N_3}{N_3}},\,\,
\wt{\mu}_2 :=  {\frac{\mu_2-N_2-b N_3}{N_3}},\,\,
\wt{\mu}_3 := \frac{\mu_3-N_3}{N_3},\,\,
\wt{\mu}_4 := \frac{\mu_4-N_4}{N_4}.
\end{gathered}
\end{equation}
Since $\wt{\mu}_j \in [0,1]$, then there exists a function $\Psi : [0,1]^4 \to \R$ such that
\begin{equation*}
\Psi(\ntof) = \wt{m}(\nof).
\end{equation*}
			
We extend $\Psi$ to smooth and compactly supported function on $[-2,2]^4$ and it can be also regarded as $4$-periodic function on $\R^4$. Therefore, we can expand $\Psi$ as Fourier series
\begin{equation*}
\Psi(\ntof) = \sum_{\theta_i\in \Z/4} e^{i(\theta_1\wt{\mu}_1+\theta_2\wt{\mu}_2+\theta_3\wt{\mu}_3+\theta_4\wt{\mu}_4)} B(\thetaof),
\end{equation*}
and the coefficient can be bounded by \cite[Theorem 3.2.16]{Grafacus}
\begin{equation}\label{equ:AthetaestGra}
\sum_{\theta_i\in \Z/4} \vt B(\thetaof)\vt \lesssim \Vt \Psi\Vt_{C^2([0,1]^4)}.
\end{equation}
			
With this in hand, \eqref{fml-EI-Case2-piece} can be rewritten as
\begin{equation*}
{\sum_{I_a,I_b\subset S_1}\sum_{\theta_i\in \Z/4}B(\theta_1,\theta_2,\theta_3,\theta_4) \int_0^T\int P_{I_a}v^{\theta_1}_1\big[P(\alpha)(P_{I_b}v^{\theta_2}_2)\ovr{P(\beta)(\pi_{\mu_3}v^{\theta_3}_3)}P(\gamma)(\pi_{\mu_4}v^{\theta_4}_4)\big] \Ind{x}\Ind{t},}
\end{equation*}
with $v_j^{\theta_j} = e^{i\theta_j \wt{\mu}_j}v_j$ and 
\begin{equation*}
	P_{I_a} := \sum_{\mu_a \in I_a} \pi_{\mu_a},\,\,\, P_{I_b} := \sum_{\mu_b \in I_b} \pi_{\mu_b}.
\end{equation*}
			
We assert that
\begin{equation}\label{fml-EI-asser-A}
\sum_{\theta_i\in \Z/4}\vt B(\theta_1,\theta_2,\theta_3,\theta_4)\vt \lesssim (a-b)\frac{N_3}{N_2} \frac{1}{(N_2N_3(a-b))^{\ell}}.
\end{equation}
For fixed $(\theta_1,\theta_2,\theta_3,\theta_4)\in [\Z/4]^4$, by Lemma \ref{lem:BilBourg}, we get 
\begin{equation*}
\begin{aligned}
&\Big|\int_0^T\int P_{I_a}v^{\theta_1}_1\big[P(\alpha)(P_{I_b}v^{\theta_2}_2)\ovr{P(\beta)(\pi_{\mu_3}v^{\theta_3}_3)}P(\gamma)(\pi_{\mu_4}v^{\theta_4}_4)\big] \Ind{x}\Ind{t}\Big|\\
\lesssim& \Vt P_{I_a}v^{\theta_1}_1 P(\gamma)(\pi_{\mu_4}v^{\theta_4}_4)\Vt_{L_t^2([0,T],L_x^2(\R^d))} \Vt P(\alpha)(P_{I_b}v^{\theta_2}_2) P(\beta)(\pi_{\mu_3}v^{\theta_3}_3)\Vt_{L_t^2([0,T],L_x^2(\R^d))}\\
\lesssim& (N_2N_3)^{\ell} \frac{N_3^{\frac{d-1}{2}}N_4^{\frac{d-1}{2}}}{N_1^{\frac{1}{2}}N_2^{\frac{1}{2}}} \Vt P_{I_a}v_1\Vt_{X^{0,\frac{1}{2}+}} \Vt P_{I_b}v_2\Vt_{X^{0,\frac{1}{2}+}} \Vt v_3\Vt_{X^{0,\frac{1}{2}+}} \Vt v_4\Vt_{X^{0,\frac{1}{2}+}}.
\end{aligned}
\end{equation*}
Combining the above with \eqref{fml-EI-asser-A}, \eqref{fml-EI-Case2-PPP} enjoys the bound
\begin{equation*}
\begin{aligned}
&\sum_{I_a,I_b\subset S_1} \Big( \sum_{\theta_i\in[\Z/4]^4} \vt B(\theta_1,\theta_2,\theta_3,\theta_4)\vt\Big) (N_2N_3)^{\ell} \frac{N_3^{\frac{d-1}{2}}N_4^{\frac{d-1}{2}}}{N_1^{\frac{1}{2}}N_2^{\frac{1}{2}}}\\
&\hspace{36ex}\times\Vt P_{I_a}v_1\Vt_{X^{0,\frac{1}{2}+}} \Vt P_{I_b}v_2\Vt_{X^{0,\frac{1}{2}+}} \Vt v_3\Vt_{X^{0,\frac{1}{2}+}} \Vt v_4\Vt_{X^{0,\frac{1}{2}+}}\\
&\lesssim \sum_{I_a,I_b\subset S_1} \vt a-b\vt \frac{N_3}{N_2} \frac{(N_2N_3)^{\ell}}{(N_2N_3(a-b))^{\ell}} \frac{N_3^{\frac{d-1}{2}}N_4^{\frac{d-1}{2}}}{N_1^{\frac{1}{2}}N_2^{\frac{1}{2}}}\\
&\hspace{36ex}\times\Vt P_{I_a}v_1\Vt_{X^{0,\frac{1}{2}+}} \Vt P_{I_b}v_2\Vt_{X^{0,\frac{1}{2}+}} \Vt v_3\Vt_{X^{0,\frac{1}{2}+}} \Vt v_4\Vt_{X^{0,\frac{1}{2}+}}\\
&\lesssim \frac{N_3}{N_2} \frac{N_3^{\frac{d-1}{2}}N_4^{\frac{d-1}{2}}}{N_1^{\frac{1}{2}}N_2^{\frac{1}{2}}} \sum_{a-b>8}\frac{1}{(a-b)^{\ell-1}} \Vt P_{I_a}v_1\Vt_{X^{0,\frac{1}{2}+}} \Vt P_{I_b}v_2\Vt_{X^{0,\frac{1}{2}+}} \Vt v_3\Vt_{X^{0,\frac{1}{2}+}} \Vt v_4\Vt_{X^{0,\frac{1}{2}+}}. 
\end{aligned}
\end{equation*}
Taking $\ell = 3$ guarantees summability over $a,b$, finally \eqref{fml-EI-Case2-PPP} can be controlled by
\begin{equation}\label{fml-EI-Case2-Incre}
\begin{aligned}
&\frac{N_3}{N_2} \frac{N_3^{\frac{d-1}{2}}N_4^{\frac{d-1}{2}}}{N_1^{\frac{1}{2}}N_2^{\frac{1}{2}}} \frac{N_1}{N_2N_3N_4} \Vt v_1\Vt_{X^{-1,\frac{1}{2}+}}\prod_{i=2}^4 \Vt v_i\Vt_{X^{1,\frac{1}{2}+}}.
\end{aligned}
\end{equation}
For $d=2$
\begin{equation*}
\eqref{fml-EI-Case2-Incre} \lesssim \frac{N_1^{\frac{1}{2}}N_3^{\frac{1}{2}}}{N_2^{\frac{5}{2}} N_4^{\frac{1}{2}}}\Vt v_1\Vt_{X^{-1,\frac{1}{2}+}}\prod_{i=2}^4 \Vt v_i\Vt_{X^{1,\frac{1}{2}+}} \lesssim N^{-\frac{3}{2}+}(N_1N_2N_3N_4)^{0+}\Vt v_1\Vt_{X^{-1,\frac{1}{2}+}}\prod_{i=2}^4 \Vt v_i\Vt_{X^{1,\frac{1}{2}+}},
\end{equation*}
while for $d=3$ 
\begin{equation*}
\eqref{fml-EI-Case2-Incre} \lesssim \frac{N_1^{\frac{1}{2}}N_3^1}{N_2^{\frac{5}{2}}}\Vt v_1\Vt_{X^{-1,\frac{1}{2}+}}\prod_{i=2}^4 \Vt v_i\Vt_{X^{1,\frac{1}{2}+}} \lesssim N^{-1+}(N_1N_2N_3N_4)^{0+}\Vt v_1\Vt_{X^{-1,\frac{1}{2}+}}\prod_{i=2}^4 \Vt v_i\Vt_{X^{1,\frac{1}{2}+}}.
\end{equation*}
Therefore, we have finished this case under the assumption of \eqref{fml-EI-asser-A}. 
			
Next, we show \eqref{fml-EI-asser-A}. Using \eqref{equ:AthetaestGra}, we are reduced to show
\begin{equation}\label{fml-EI-C2-bound}
\Vt\Psi\Vt_{C^2([0,1]^4)} \lesssim (a-b) \frac{N_3}{N_2} \frac{1}{(N_2N_3(a-b))^{\ell}}.
\end{equation}
Since $N \gg N_3,N_4$ here, it is sufficient to estimate point-wise bound for
\begin{equation}\label{fml-EI-n1234}
\Lf\vt \frac{\partial^k}{\partial(\ntof)^k}\Lf( \frac{2^\ell}{(\mu_1^2-\mu_2^2-\mu_3^2-\mu_4^2)^\ell}\Rt) \Rt\vt,
\end{equation}
and 
\begin{equation}\label{fml-EI-mn1mn2}
\Lf\vt \frac{\partial^k}{\partial(\wt{\mu}_1,\wt{\mu}_2)^k}\Lf( 1-\frac{m(N_2+\alpha N_3+N_3\wt{\mu}_1)}{m(N_2+\beta N_3+N_3\wt{\mu}_2)} \Rt) \Rt\vt,
\end{equation}
with $\vt k\vt \le 2$ respectively. 
			
We begin with \eqref{fml-EI-mn1mn2}. For all $\ell\in\N$, one has
\begin{equation}\label{fml-EI-m-dere}
{\Lf\vt\frac{\partial^\ell m(\xi)}{m(\xi)}\Rt\vt \lesssim \frac{1}{\vt \xi\vt^{\ell}}.}
\end{equation}
Combining \eqref{fml-EI-m-dere} and the mean value theorem, we get that
\begin{equation*}
\Lf\vt 1 - \frac{m(\mu_1)}{m(\mu_2)}\Rt\vt \sim \vt \mu_1-\mu_2\vt \frac{m^{\prime}(\mu_2)}{m(\mu_2)} \sim \vt a-b\vt\frac{N_3}{N_2},
\end{equation*}
and
\begin{equation*}
\Lf\vt \frac{\partial^k}{\partial(\ntof)^k}\Lf( 1 - \frac{m(N_2+\alpha N_3+N_3\wt{\mu}_1)}{m(N_2+\beta N_3+N_3\wt{\mu}_2)} \Rt) \Rt\vt \lesssim \Lf(\frac{N_3}{N_2}\Rt)^{\vt k\vt} \lesssim \vt a-b\vt \frac{N_3}{N_2}.
\end{equation*}
Hence, we arrive at
\begin{equation}\label{fml-EI-n1234-bound}
\eqref{fml-EI-mn1mn2} \lesssim \vt a-b\vt \frac{N_3}{N_2}.
\end{equation}
			
Next, we turn to \eqref{fml-EI-n1234}. Substituting \eqref{fml-EI-variablechange} into $\mu_1^2 - \mu_2^2-\mu_3^2-\mu_4^2$ we have
\begin{equation*}
\begin{aligned}
\mu_1^2 - \mu_2^2-\mu_3^2-\mu_4^2 &= 2N_2N_3(a-b+(\wt{\mu}_1+\wt{\mu}_2)) + N_3^2(a^2-b^2)+ N_3^2(\wt{\mu}_1^2-\wt{\mu}_2^2) + 2N_3^2 (a\wt{\mu}_1-\beta\wt{\mu}_2)\\
&\hspace{2ex}-N^2_3 (1+\wt{\mu}_3)^2 -N_4^2(1+\wt{\mu}_4)^2\\
&=\big( 2N_2N_3 (a-b) + (a^2-b^2)N_3^2 \big) G(\ntof),
\end{aligned}
\end{equation*}
where $G(\ntof) : \R^4 \to (0,1)$. Since $a-b>8$, $G(\ntof)$ enjoys lower and upper bound
\begin{equation*}
\frac{1}{2} \le G(\ntof) \lesssim 1.
\end{equation*} 
Moreover, for all multi-index $k$, there holds
\begin{equation*}
\Lf\vt\frac{\partial^kG(\ntof)}{\partial(\ntof)^k}\Rt\vt \lesssim_{\vt k\vt} 1.
\end{equation*}
We conclude that
\begin{equation}\label{fml-EI-mn1mn2-bound}
\eqref{fml-EI-n1234} \lesssim \frac{1}{\big( 2N_2N_3 (a-b) + (a^2-b^2)N_3^2 \big)^{\ell}} \lesssim \frac{1}{\big(N_2N_3(a-b)\big)^{\ell}}.
\end{equation}
			
Combining \eqref{fml-EI-n1234-bound} with \eqref{fml-EI-mn1mn2-bound}, we obtain \eqref{fml-EI-C2-bound}. So far, we have finished summation on $S_1$. We also obtain the same bound for summation on $S_2$ by similar argument with that of $S_1$.
			
Finally, we deal with summation on $S_3$. We write that
\begin{align}\label{fml-EI-S3-eq}
&\sum_{S_3}\int_0^T\int \Lf[ 1-\frac{m(\mu_1)}{m(\mu_2)m(\mu_3)m(\mu_4)}\Rt] \ovr{\pi_{\mu_1}(v_1)}\pi_{\mu_2}(v_2)\ovr{\pi_{\mu_3}(v_3)}\pi_{\mu_4}(v_4) \Ind{x}\Ind{t}\\\nonumber
=&\frac{N_3}{N_2} \sum_{I_b} \sum_{I_a} \sum_{\substack{\mu_1\in I_a,\mu_2\in I_b\\ \mu_3\sim N_3, \mu_4\sim N_4}} \frac{N_3}{N_2} \int_0^T\int \frac{N_2}{N_3}\Lf[ 1-\frac{m(\mu_1)}{m(\mu_2)m(\mu_3)m(\mu_4)}\Rt] \ovr{\pi_{\mu_1}(v_1)}\pi_{\mu_2}(v_2)\ovr{\pi_{\mu_3}(v_3)}\pi_{\mu_4}(v_4) \Ind{x}\Ind{t}.
\end{align}
			
It is clear that the multiplier
\begin{equation*}
\wt{m}(\nof) = \frac{N_2}{N_3} \Lf[ 1-\frac{m(\mu_1)}{m(\mu_2)m(\mu_3)m(\mu_4)}\Rt],
\end{equation*}
meets the condition in Theorem \ref{thm-multi}. Then, applying Lemma \ref{lem:BilBourg}, we obtain
\begin{align*}
&\sum_{\substack{\mu_1\in I_a,\mu_2\in I_b\\ \mu_3\sim N_3, \mu_4\sim N_4}} \Big| \int_0^T\int \frac{N_2}{N_3}\Lf[ 1-\frac{m(\mu_1)}{m(\mu_2)m(\mu_3)m(\mu_4)}\Rt] \ovr{\pi_{\mu_1}(v_1)}\pi_{\mu_2}(v_2)\ovr{\pi_{\mu_3}(v_3)}\pi_{\mu_4}(v_4) \Ind{x}\Ind{t}\Big|\\
&\lesssim \Vt \pi_{\mu_1}(v_1)\pi_{\mu_4}(v_4)\Vt_{L_t^2([0,T],L_x^2(\R^d))}\Vt \pi_{\mu_2}(v_2)\pi_{\mu_3}(v_3)\Vt_{L_t^2([0,T],L_x^2(\R^d))}\\
&\lesssim \frac{N_3^{\frac{d-1}{2}}N_4^{\frac{d-1}{2}}}{N_1^{\frac{1}{2}}N_2^{\frac{1}{2}}} \Vt \pi_{\mu_1}v_1\Vt_{X^{-1,b}} \prod_{i=2}^4\Vt \pi_{n_i}v_i\Vt_{X^{1,b}}.
\end{align*}
Hence, \eqref{fml-EI-S3-eq} can be controlled by
\begin{equation*}
\begin{aligned}
&\sum_{I_b} \sum_{I_a} \frac{N_3}{N_2} \frac{N_3^{\frac{d-1}{2}}N_4^{\frac{d-1}{2}}}{N_1^{\frac{1}{2}}N_2^{\frac{1}{2}}}\frac{N_1}{N_2N_3N_4} \Vt P_{I_a}v_1\Vt_{X^{-1,b}}\Vt P_{I_b}v_2\Vt_{X^{1,b}}\prod_{i=2}^4 \Vt v_i\Vt_{X^{1,b}}\\
\lesssim& \frac{N_3}{N_2} \frac{N_3^{\frac{d-1}{2}}N_4^{\frac{d-1}{2}}}{N_1^{\frac{1}{2}}N_2^{\frac{1}{2}}}\frac{N_1}{N_2N_3N_4} \Vt v_1\Vt_{X^{-1,b}}\prod_{i=2}^4 \Vt v_i\Vt_{X^{1,b}},
\end{aligned}
\end{equation*}
where in the last inequality, note that there are finite number of $I_b$ once we fix $I_a$. 
			
We conclude that
\begin{equation*}
\mbox{(LHS) of } \eqref{fml-EI-S3-eq} \lesssim N^{-s_0+}(N_1N_2N_3N_4)^{0+}\Vt v_1\Vt_{X^{-1,\frac{1}{2}+}}\prod_{i=2}^4 \Vt v_i\Vt_{X^{1,\frac{1}{2}+}},
\end{equation*}
Therefore, we have finished the summation on $S_1,S_2$ and $S_3$.

\textbf{Case 3: }$N_2\sim N_3\gtrsim N$.
			
In this case, we need to compare $N$ and $N_1$ which will impact the bound of the multiplier $A$. Using the spectral multiplier theorem, bilinear Strichartz estimate, Bernstein's inequality and Cauchy-Schwarz, we have
\begin{align*}
\mbox{(LHS) of }\eqref{fml-almost-reduction}&\lesssim \frac{m(N_1)}{m(N_2)m(N_3)m(N_4)}\frac{N_1}{N_2N_3N_4}\frac{(N_1N_4)^\frac{d-1}{2}}{(N_2N_3)^\frac{1}{2}}\|v_1\|_{X^{-1,\frac{1}{2}+}}\prod_{i=2}^{4}\|v_i\|_{X^{1,\frac{1}{2}+}}.
\end{align*}
Then, we divide the frequencies into the following two subcases.
			
\noindent\textbf{Subcase 3(a)}: $N_2\sim N_3$, $N_1\gg N$. By definition, we have
\begin{align*}
\frac{m(N_1)}{m(N_2)m(N_3)m(N_4)}\lesssim\frac{N^{1-s}N_1^{-(1-s)}}{N^{2(1-s)}(N_2N_3)^{s-1}m(N_4)}.
\end{align*}
Then we obtain
\begin{align*}
\mbox{(LHS) of }\eqref{fml-almost-reduction}&\lesssim\frac{N_1^{s+\frac{d-1}{2}}}{N^{1-s}(N_2N_3)^{s+\frac{1}{2}}m(N_4)N_4^{\frac{3-d}{2}}}\|v_1\|_{X^{-1,\frac{1}{2}+}}\prod_{i=2}^{4}\|v_i\|_{X^{1,\frac{1}{2}+}}\\
&\lesssim \frac{1}{N^{1-s}N_2^{2s+1-s-\frac{d-1}{2}-}m(N_4)N_4^{0+}}\|v_1\|_{X^{-1,\frac{1}{2}+}}\prod_{i=2}^{4}\|v_i\|_{X^{1,\frac{1}{2}+}}.
\end{align*}
We notice that if $N_4 \ge N$, for any $k>0$ and $s\ge 1-k$
\begin{equation*}
m(N_4)N_4^k = N^{1-s}N_4^{s+k-1} \ge N^k, 
\end{equation*}
and it gives that $m(N_4)N_4^{\frac{1}{2}}\gtrsim N^{\frac{1}{2}}$ and $m(N_4)N_4^{0+}\gtrsim N_4^{0+}$. Therefore, we get 
\begin{align*}
\mbox{(LHS) of }\eqref{fml-almost-reduction}\lesssim N^{-s_0+}\|v_1\|_{X^{-1,\frac{1}{2}+}}\prod_{i=2}^{4}\|v_i\|_{X^{1,\frac{1}{2}+}}.
\end{align*} 
			
\noindent\textbf{Subcase 3(b)}: $N_2\sim N_2\geq N$ and $N_1\lesssim N$. Using the bound for the multiplier
\begin{align*}
\frac{m(N_1)}{m(N_2)m(N_3)m(N_4)}\lesssim\frac{1}{N^{2(1-s)}(N_2N_3)^{s-1}m(N_4)},
\end{align*}
we have
\begin{align*}
\mbox{(LHS) of }\eqref{fml-almost-reduction}&\lesssim\frac{1}{N^{2(1-s)}(N_2N_3)^{s-1}m(N_4)}\frac{N_1}{N_2N_3N_4}\frac{(N_1N_4)^\frac{d-1}{2}}{(N_2N_3)^\frac{1}{2}}\|v_1\|_{X^{-1,\frac{1}{2}+}}\prod_{i=2}^{4}\|v_i\|_{X^{1,\frac{1}{2}+}}\\
&\lesssim  N^{-s_0+}\|v_1\|_{X^{-1,\frac{1}{2}+}}\prod_{i=2}^{4}\|v_i\|_{X^{1,\frac{1}{2}+}}.
\end{align*}
			
\textbf{Case 4:} $N_2\gg N_3\gtrsim N$. By the spectral multiplier theorem, bilinear Strichartz estimate, we have
\begin{align}\label{fml-1-Case4}
\mbox{(LHS) of }\eqref{fml-almost-reduction}&\lesssim\frac{m(N_1)}{m(N_2)m(N_3)m(N_4)}\frac{N_1}{N_2N_3N_4}\frac{(N_3N_4)^\frac{d-1}{2}}{(N_1N_2)^\frac{1}{2}}\|v_1\|_{X^{-1,\frac{1}{2}+}}\prod_{i=2}^{4}\|v_i\|_{X^{1,\frac{1}{2}+}}.
\end{align}
For $d=2$, we bound the above term by
\begin{align*}
\eqref{fml-1-Case4}&\lesssim\frac{1}{m(N_3)N_3^\frac{1}{2}m(N_4)N_4^\frac{1}{2}(N_1N_2)^\frac12}\|v_1\|_{X^{-1,\frac{1}{2}+}}\prod_{i=2}^{4}\|v_i\|_{X^{1,\frac{1}{2}+}}\\
&\lesssim\frac{1}{NN^{1-s}N_3^{s-\frac{1}{2}}}\|v_1\|_{X^{-1,\frac{1}{2}+}}\prod_{i=2}^{4}\|v_i\|_{X^{1,\frac{1}{2}+}}\\
&\lesssim N^{-\frac{3}{2}+}\|v_1\|_{X^{-1,\frac{1}{2}+}}\prod_{i=2}^{4}\|v_i\|_{X^{1,\frac{1}{2}+}}.
\end{align*}
For $d=3$, we have
\begin{align*}
\eqref{fml-1-Case4}&\lesssim \frac{1}{m(N_3)m(N_4)N_2}\|v_1\|_{X^{-1,\frac{1}{2}+}}\prod_{i=2}^{4}\|v_i\|_{X^{1,\frac{1}{2}+}}\\
&\lesssim N^ {-1+}\|v_1\|_{X^{-1,\frac{1}{2}+}}\prod_{i=2}^{4}\|v_i\|_{X^{1,\frac{1}{2}+}}.
\end{align*}
Combining the estimate together, we finish the proof of term $I_1$.
			
\subsection{Bound on term $I_2$}
For the term $I_2$, we expect to show the better bound
\begin{align*}
I_2\lesssim N^{-s_0+}\|Iu\|_{X^{1,\frac{1}{2}+}}^6.
\end{align*}
Since $I$ is  a self-adjoint operator, we can rewrite term $I_2$ as
\begin{align*}
I_2&=\sum_{\mu_i\sim N_i}\int_{0}^t\int_{\R^d}\big(1-\frac{m(\mu_1)}{m(\mu_2)m(\mu_3)m(\mu_4)}\big)\overline{\pi_{\mu_1}I(|u|^2u)}\pi_{\mu_2}(Iu)\overline{\pi_{\mu_3}(Iu)}\pi_{\mu_4}(Iu)dxdt\\
&=\int_{0}^t\int_{\R^d}I(|u|^2u)|Iu|^2Iudxdt-\int_{0}^t\int_{\R^d}|I^2(|u|^2u)u\overline{u}udxdt.
\end{align*}
Decomposing $u=\sum_{N}u_N$ into the sum of  Littlewood-Paley pieces as before and writing $I_2$ as
\begin{align}\label{term 2 dyadic}
I_2&=\sum_{N_i \in 2^{\Bbb{N}}}\Big(\int_0^t\int_{\R^d}
I\big(u_{N_1}\overline{u_{N_2}}u_{N_3}\big)Iu_{N_4}\overline{Iu_{N_5}}Iu_{N_6}dxdt \notag\\
&\hspace{10ex}-\int_0^t\int_{\R^d}
I^2\left(u _{N_1}\overline{u_{N_2}}u_{N_3}\right)u_{N_4}\overline{u_{N_5}}u_{N_6}dxdt\Big).
\end{align}
We denote by $N_{max}$ the largest among all $N_1,\ldots,N_6$ and $N_{med}$ the second one. We renumber $N_{j}$ for which delete $N_{max},N_{med}$ from it as $\tilde{N}_{j}$($\{\tilde{N}_1,\cdots,\tilde{N}_4\}=\{N_1,\cdots,N_6\}\backslash\{N_{max},N_{med}\}$). We will treat two cases below:
\begin{enumerate}
\item \textbf{Case one}: $N\gg N_{max}$ or $N_{max}\gg N_{med}$,
\item \textbf{Case two}: $N_{max}\gtrsim N,\quad N_{max}\sim N_{med}$.
\end{enumerate}
			
\textbf{Case One} : $N\gg N_{max}$ or $N_{max}\gg N_{med}$. We only give the details  for the case $N\gg N_{max}$ and remaining case $N_{max}\gg N_{med}$ is similar. Splitting $u_{N_1}u_{N_2}u_{N_3}$ by
\begin{equation}\label{high-low-separation}
u_{N_1}\overline{u_{N_2}}u_{N_3}=P_{\leq N}\left(u_{N_1}\overline{u_{N_2}}u_{N_3}\right)+P_{>N}\left(u_{N_1}\overline{u_{N_2}}u_{N_3}\right)
\end{equation}
and	substituting $\eqref{high-low-separation}$ for $\eqref{term 2 dyadic}$, we obtain
\begin{align}\label{high-low-term2}
I_2&=\sum_{N_i \in 2^{\Bbb N}}\left(\int_0^t\int_{\R^d}
I(P_{\le N}+P_{>N})\left(u_{N_1}\overline{u_{N_2}}u_{N_3}\right)Iu_{N_4}\overline{Iu_{N_5}}Iu_{N_6}\,dx\,dt \right.\nonumber\\
&\hspace{10ex}\left .-\int_0^t\int_{\R^d}
I^2(P_{\le N}+P_{>N})\left(u_{N_1}\overline{u_{N_2}}u_{N_3}\right)u_{N_4}\overline{u_{N_5}}u_{N_6}\,dx\,dt\right).\nonumber\\		
&=\sum_{N_i \in 2^{\N}}\bigg(\int_0^t\int_{\R^d}[(IP_{\le N}-I^{2}P_{\le N})+IP_{>N}-I^{2}P_{>N}]\left(u_{N_1}\overline{u_{N_2}}u_{N_3}\right)\notag\\
&\hspace{10ex}\times Iu_{N_4}\overline{Iu_{N_5}}Iu_{N_6}\,dx\,dt \bigg),
\end{align}
under the assumption that $N_{max}\ll N$. Noticing that $I_N,P_{>N}$ is self-adjoint and $L^{2}$ bounded, the almost orthogonal estimate $\eqref{fml-ortho}$ implies the following for every $r\in\Bbb N$,
\begin{align*}
\|IP_{>N}(u_{N_1}\overline{u_{N_2}}u_{N_3})\|_{L^{2}(\R^d)}=&\sup\limits_{\|v\|_{L^{2}}=1} \Big|\int_{\R^d}\overline{v}IP_{>N}(u_{N_1}\overline{u_{N_2}}u_{N_3})dx\Big| \\
=&\sup\limits_{\|v\|_{L^{2}_{x}}=1} \Big|\int_{\R^d}I(\overline{v})P_{>N}(u_{N_1}\overline{u_{N_2}}u_{N_3})dx\Big|\\
\lesssim&\sup\limits_{\|v\|_{L^{2}_{x}}=1}\left|\int_{\R^d}P_{>N}(I\overline{v})(u_{N_1}\overline{u_{N_2}}u_{N_3})dx\right|\\
\lesssim&N^{-r}\prod_{i=1}^{3}\|u_{N_{i}}\|_{L^{2}(\R^d)}.
\end{align*}
Using the fact that $I^2 u_{N_i}=I u_{N_i}=u_{N_i}$ if ${N_i}\ll N$, H\"{o}lder's inequality and the   Strichartz estimate, then the contribution of the term for which $N_{max}\ll N$ to high frequency part of $\eqref{high-low-term2}$ is
\begin{align*}
&\sum_{N_{i}\ll N}\Big|\int_0^t\int_{\R^d}IP_{>N}\left(u_{N_1}\overline{u_{N_2}}u_{N_3}\right) Iu_{N_4}\overline{Iu_{N_5}}Iu_{N_6}\,dx\,dt\Big|\\
\lesssim&\sum_{N_{i}\ll N}\int_0^t\|IP_{>N}(u_{N_1}\overline{u_{N_2}}u_{N_3})\|_{L_{x}^{2}(\R^d)}\|Iu_{N_4}\overline{Iu_{N_5}}Iu_{N_6}\|_{L^{2}_{x}(\R^d)}\,dt\\
\lesssim&\|IP_{>N}(u_{N_1}\overline{u_{N_2}}u_{N_3})\|_{L_{t}^\infty L_x^2([0,\delta]\times \R^d)}\prod_{i=4}^{6}{\|u_{N_{i}}\|_{L_{t}^{3}L_x^6([0,\delta]\times \R^d)}}\\
\lesssim& N^{-r}\prod_{i=1}^{3}\Vert u_{N_i}\Vert_{X^{0,\frac{1}{2}+}([0,\delta]\times \R^d)}\prod_{i=4}^{6}{\|u_{N_{i}}\|_{L_{t}^{3}L_x^6([0,\delta]\times \R^d)}}\\
\lesssim& N^{-(r-r_0)}\prod_{i=1}^{6}\|u_{N_{i}}\|_{X^{0,\frac{1}{2}+}([0,\delta]\times \R^d)},
\end{align*}
where $r_0=0$ if $d=2$ and $r_0=1$ if $d=3$. In the last inequality, we use the following Strichartz estimate, 
\begin{align*}
\Vert e^{itH}\Delta_Nf\Vert_{L_t^{3}L_x^6([0,\delta]\times \R^3)}\lesssim N^{\frac{1}{3}}\Vert e^{itH}\Delta_Nf\Vert_{L_t^3L_x^{\frac{18}{5}}}\lesssim N^{\frac{1}{3}}\Vert \Delta_Nf\Vert_{L^2(\R^3)}.
\end{align*}
			
Similar argument gives the same bound with $N^ {-r}\prod\limits_{i=1}^{6}\|v_{N_{i}}\|_{L^{2}_{x}}$ for the term $$\sum_{N_{i}\ll N}\int_0^t\int_{\R^d}I^{2}P_{>N}\left(u_{N_1}\overline{u_{N_2}}u_{N_3}\right) Iu_{N_4}\overline{Iu_{N_5}}Iu_{N_6}\,dx\,dt.$$
Meanwhile the contribution of the case for which $N_{max}\ll N$ to low frequency part of (\ref{high-low-term2}) vanishes due to $I\left(P_{\leq N}u_{N_1}\overline{u_{N_2}}u_{N_3}\right)=I^2\left(P_{\leq N}u_{N_1}\overline{u_{N_2}}u_{N_3}\right)$.
			
A similar argument using Theorem \ref{thm-ortho} shows that the contribution of $N_{max}\gg N_{med}$ is also harmless so we restrict attention to the case when $N_{max}\gtrsim N$ and $N_{max}\sim N_{med}$.
			
\textbf{Case two}: $N_{max}\gtrsim N,\hspace{1ex} N_{max}\sim N_{med}$.
			
We notice that $I$ is bounded as an $L^p$ multiplier, one can estimate the case of $N_{max}\gtrsim N$ in $d=2$ via H\"older's inequality as follows:
\begin{align*}
I_2\lesssim& \sum_{\substack{N_{max}, N_{med}\gtrsim N,\\  \tilde N_i\leq N_{max}}}||u_{N_{max}}||_{L_{t,x}^{4}}||u_{N_{med}}||_{L^{4}_{t,x}}\prod_{i=1}^4||u_{\tilde N_i}||_{L_{t,x}^{8}}.
\end{align*}

By using the Strichartz estimate
\begin{align*}
I_2&\lesssim \sum_{\substack{N_{max}, N_{med}\gtrsim N,\\ \tilde N_i\leq N_{max}}} ||u_{N_{max}}||_{X^{0,1/2+}}||u_{N_{med}}||_{X^{0,1/2+}}\prod_{i=1}^4 \tilde N_i^{\frac{1}{2}}||u_{\tilde N_i}||_{X^{0,1/2+}}\\
&\lesssim \frac{1}{m(N_{max})N_{max}m(N_{med})N_{med}}\Vert Iu_{max}\Vert_{X^{1,\frac{1}{2}+}}\Vert Iu_{med}\Vert_{X^{1,\frac{1}{2}+}}\prod_{j=1}^{4}\frac{1}{\tilde N_i^{\frac{1}{2}}m(\tilde{N}_i)}\Vert Iu_{\tilde{N}_j}\Vert_{X^{1,\frac{1}{2}+}}.
\end{align*}
For any $k>0$ and $\alpha>1-s$ for $s>1$, we have  that
\begin{equation}\label{index}
m(k)k^{\alpha}\gtrsim\left\{
\begin{aligned}
&1,\quad k\le N,\\
&N^{\alpha},\quad k\ge 2N.\\
\end{aligned}
\right	.
\end{equation}
Then we have
\begin{align*}
I_2&\lesssim \frac{1}{N^{2}}\sum_{N_{max}\in2^{\Bbb{N},N_{med},\tilde{N}_i\lesssim N_{max}}}N_{max}^{0-}\Vert Iu_{max}\Vert_{X^{1,\frac{1}{2}+}}\Vert Iu_{med}\Vert_{X^{1,\frac{1}{2}+}}\prod_{i=1}^{4}\Vert Iu_{\tilde{N}_j}\Vert_{X^{1,\frac{1}{2}+}}\\
&\lesssim N^{-2+}\Vert Iu\Vert_{X^{1,\frac{1}{2}+}}^6. 
\end{align*}
For the case of $d=3$, we have
\begin{align*}
I_2\lesssim& \sum_{\substack{N_{max}, N_{med}\gtrsim N,\\  \tilde N_i\leq N_{max}}} ||u_{N_{max}}||_{L_{t,x}^{\frac{30}{7}}}||u_{N_{med}}||_{L^{\frac{30}{7}}_{t,x}}\prod_{i=1}^4||u_{\tilde N_i}||_{L_{t,x}^{\frac{15}{2}}}\\
&\sum_{\substack{N_{max}, N_{med}\gtrsim N,\\  \tilde N_i\leq N_{max}}}  N_{max}^{0-}N_{max}^{-\frac{2}{3}+}N_{med}^{-\frac{2}{3}+} ||u_{N_{max}}||_{X^{1,\frac{1}{2}+}}||u_{N_{med}}||_{X^{1,\frac{1}{2}+}}\prod_{i=1}^4N_j^{-\frac{1}{6}}||u_{\tilde N_i}||_{X^{1,\frac{1}{2}+}}\\
\lesssim&\sum_{\substack{N_{max}, N_{med}\gtrsim N,\\  \tilde N_i\leq N_{max}}}N_{max}^{0-}N_{max}^{-\frac{2}{3}+}m(N_{max})^{-1}N_{med}^{-\frac{2}{3}+}m(N_{med})^{-1}\|Iu_{max}\|_{X^{1,\frac{1}{2}+}}\|Iu_{med}\|_{X^{1,\frac{1}{2}+}}\\
&\hspace{4ex}\times\prod_{j=1}^{4}m(N_j)^{-1}N_j^{-\frac{1}{6}}\|Iu_{\tilde{N}_i}\|_{X^{1,\frac{1}{2}+}}.
\end{align*}
It is clear that
\begin{equation*}
m(N_{max})^{-1}N_{max}^{-\frac{2}{3}+} m(N_{med})^{-1}N_{med}^{-\frac{2}{3}+}\times\prod_{j=1}^{4}m(N_j)^{-1}N_j^{\frac{1}{6}} \lesssim N^{-\frac{4}{3}+}.
\end{equation*}
Combining the estimate of $I_1$ and $I_2$ above, we conclude that the energy increment is at most $O(N^{-\frac{3}{2}+})$ in $d=2$ and $O(N^{-1})$ in $d=3$.

Therefore, we conclude the proof of Proposition \ref{prop-EI}.

\end{proof}

\subsection{Proof of Theorem \ref{thm1}}

Now, we are in position to prove Theorem \ref{thm1}. Let $u(t)$ be global solution to $\eqref{fml-NLS}$. Taking arbitrary $T>0$ and splitting the time interval $[0,T]$ into $\lfloor{\frac{T}{\delta}}\rfloor$ pieces of small slab $[0,\delta]$, where $\delta\gtrsim1$ be determined in the local theory. By the local well-posedness, decay of modified energy, we have that for $TN^{-s_0+}\lesssim1$, there holds
\begin{align*}
E(Iu)(t)\lesssim1.
\end{align*} 
Using the property of $I$-operator, we have
\begin{align*}
\|u(T)\|_{\mathcal H^{s}(\R^d)}\lesssim N^{s-1}\|Iu(T)\|_{\mathcal H^1(\R^d)}\lesssim N^{s-1}.
\end{align*}
In the last inequality, we use the fact that the $\mathcal H^s$ can be bounded by the modified energy.  In conclusion, we have proved that
\begin{align*}
\|u(T)\|_{\mathcal{H}^s(\R^d)}\lesssim (1+|t|)^{\tilde s_0(s-1)+},
\end{align*} 
where $\tilde{s}_0=\frac{2}{3}$ in $d=2$ and $\tilde{ s}_0=1$ in $d=3$.
		
Therefore, we conclude the proof of Theorem \ref{thm1}.

%%%%%%%%%%%%%%%%%%%%%%%%%%%%%%%%%%%%%%%%%%%%%%%%%%%%%%%%%%%%%%%%%%%%%%%

%%%%%%%%%%%%%%%%%%%%%%%%%%%%%%%%%%%%%%%%%%%%%%%%%%%%%%%%%%%%%%%%%%%%%%%


\begin{thebibliography}{99}
			
\bibitem{Akahori} T. Akahori, K. Ito, Multilinear eigenfunction estimates for the harmonic oscillator and the nonlinear Schr\"odinger equation with the harmonic potential. Ann. Henri Poincar\'e, 10 (2009),  673-709.
			
\bibitem{BPTV-JDE}D. Berti, F. Planchon, N. Tzvetkov, N. Visciglia,  New bounds on the high Sobolev norms of the 1D NLS solutions.  J. Differ. Equations 430 (2025), Article ID 113202, 34 p.
					
\bibitem{Bourgain-JAM} J. Bourgain, A remark on normal forms and the ``I-method" for periodic NLS. J. Anal. Math. 94 (2004), 125-157.
			
\bibitem{Burq1} N. Burq, P. G\'{e}rard, N. Tzvetkov, Strichartz inequalities and the nonlinear Schr\"{o}dinger equation on compact manifolds. Amer. J. Math. 126 (2004), no. 3, 569-605. 			
			
\bibitem{Burq2} N. Burq, P. G\'{e}rard, N. Tzvetkov, N. Bilinear eigenfunction estimates and the nonlinear Schr\"{o}dinger equation on surfaces. Invent. Math. 159 (2005), no. 1, 187-223.
			
\bibitem{Burq3} N. Burq, P. G\'{e}rard, N. Tzvetkov, Multilinear eigenfunction estimates and global existence for the three dimensional nonlinear Schr\"{o}dinger equations. Ann. Sci. \'{E}cole Norm. Sup. (4) 38 (2005), no. 2, 255-301.
			
\bibitem{BPT} N. Burq, A. Poiret, L. Thomann, Bilinear Strichartz estimates and almost sure global solutions for the nonlinear Schr\"odinger equation, Preprint, arxiv: 2304.10979. To appear in M\'em. Soc. Math. Fr.
					
\bibitem{Chabert-CPDE} A. Chabert, A weakly turbulent solution to the cubic nonlinear harmonic oscillator on $\R^2$ perturbed by a real smooth potential decaying to zero at infinity.  Commun. Partial Differ. Equations, 49 (2024), 185-216.
								
\bibitem{CGgLS-APDE} X. Cheng, C. Guo, Z. Guo, X. Liao and J. Shen,  Scattering of the three-dimensional cubic nonlinear Schr\"odinger equation with partial harmonic potentials, Anal. \& PDE, 17 (2024), 3371-3446.
					
\bibitem{CKSTT2004} J. Colliander, M. Keel, G. Staffiani, T. Tao, H. Takaoka, Global existence and scattering for rough solutions of a nonlienar Schr\"{o}dinger equation in $\Bbb{R}^3$. Comm. Pure Appl. Math. 57 (2004), 987-1014.
			
\bibitem{CKSTT-Invent} J. Colliander, M. Keel, G. Staffilani, H. Takaoka, T. Tao,  Transfer of energy to high frequencies in the cubic defocusing nonlinear Schr\"odinger equation, Invent. Math. 181 (2010), 39-113.
			
			
\bibitem{CKO-JAM} J. Colliander, S. Kwon, T. Oh,  A remark on normal forms and the ``upside-down" $I$-method for periodic NLS: growth of higher Sobolev norms.  J. Anal. Math. 118 (2012),  55-82.
			
\bibitem{Delort} J. Delort, Growth of Sobolev norms of solutions of linear Schr\"odinger equations on some compact manifolds.  Int. Math. Res. Not. (2010),  2305-2328.
			
\bibitem{DSZ-CPAA} M. Deng, X. Su, and J. Zheng,  Growth of Sobolev norms for 2D cubic NLS with partial harmonic potential. Commun. Pure Appl. Anal. 24 (2025),  314-337.
			
\bibitem{Deng} Y. Deng, On growth of Sobolev norms for energy critical NLS on irrational tori: small energy case. Comm. Pure Appl. Math. 72 (2019), 801-834.
			
\bibitem{DGG-IMRN} Y. Deng, P. Germain, Growth of solutions to NLS on irrational tori.  Int. Math. Res. Not. (2019), 2919-2950.
			
\bibitem{DGG-JFA} Y. Deng, P. Germain, L. Guth, Strichartz estimates for the Schr\"odinger equation on irrational tori.  J. Funct. Anal. 273 (2017), 2846-2869.
			
\bibitem{Dzi} J. Dziubanski, P. Glowacki. Sobolev spaces related to Schr\"odinger operators with polynomial potentials. Math. Z. 262 (2009), 881-894.
			
\bibitem{Dodson} B. Dodson,Global well-posedness and scattering for the defocusing, $L^2$-critical, nonlinear Schr\"odinger equation when $d=2$.  Duke Math. J. 165 (2016), 3435-3516.
						
\bibitem{GV} J. Ginibre, G. Velo, Scattering theory in the energy space for a class of nonlinear Schr\"odinger equation. J. Math. Pure Appl. 64 (1985), 363-401.
			
\bibitem{Grafacus} L. Grafacos, Classical Fourier Analysis, Graduate Texts in Mathematics, Springer; 2nd ed. edition.
			
\bibitem{Guardia-Haus-Procesi} M. Guardia, E. Haus, M. Procesi, Growth of Sobolev norms for the analytic NLS on $\mathbb{T}^2$. Adv. Math. 301 (2016), 615-692.
			
\bibitem{Guardia-Kaloshin} M. Guardia, V. Kaloshin,  Growth of Sobolev norms in the cubic defocusing nonlinear Schr\"odinger equation.  J. Eur. Math. Soc. 17 (2015), 71-149.
			
\bibitem{KV-note} R. Killip, M. Visan,  Nonlinear Schr\"odinger equations at critical regularity.  Proceedings of the Clay Mathematics Institute summer school (2013), 375-413.
			
\bibitem{KVZ-CPDE} R. Killip, M. Visan, X. Zhang, Energy-critical NLS with quadratic potentials, Comm. Partial Differential Equations, 34 (2009), 1531-1565.
			
\bibitem{Hani1} Z. Hani, A bilinear oscillatory integral estimate and bilinear refinements to Strichartz estimates on closed manifolds. Anal. \& PDE, 5 (2012), 339-363.
			
\bibitem{Hani2} Z. Hani, Global well-posedness of the cubic nonlinear Schr\"{o}dinger equation on closed manifolds. Comm. PDE, 37 (2012), 1186-1236.
					
\bibitem{HPTV} Z. Hani, B. Pausader, N. Tzvetkov, N. Visciglia,  Modified scattering for the cubic Schr?dinger equation on product spaces and applications.
Forum Math. Pi 3 (2015),  63 p.
			
\bibitem{Haus-Procesi} E. Haus, M. Procesi, Growth of Sobolev norms for the quintic NLS on $\mathbb{T}^2$. Anal. \& PDE 8 (2015), 883-922.
			
\bibitem{Ionescu} A. Ionescu, B. Pausader,  The energy-critical defocusing NLS on $\Bbb T^3$. Duke Math. J. 161 (2012),  1581-1612. 
			
\bibitem{Jao-CPDE} C. Jao,  The energy-critical quantum harmonic oscillator. Comm. PDE, 41 (2016),  79-133. 
			
\bibitem{Planchon-Jussieu} F. Planchon, On the cubic NLS on 3D compact domains, J. Inst. Math. Jussieu, 13 (2014), 1-18. 
			
\bibitem{PTV-APDE} F. Planchon, N. Tzvetkov and N. Visciglia, On the growth of Sobolev norms for NLS on $2D$ and $3D$ manifolds, Anal. PDE., 10 (2017), 1123-1147. 
			
\bibitem{PTV-RMI} F. Planchon, N. Tzvetkov and N. Visciglia, Growth of Sobolev norms for 2d NLS with harmonic potential, Rev. Mat. Iberoam., 39 (2023), 1405-1436. 
					
\bibitem{Sohinger-IUMJ} V. Sohinger, Bounds on the growth of high Sobolev norms of solutions to nonlinear Schr\"odinger equations on $\R$, Indiana Univ. Math. J. 60 (2011), 1487-1516.
					
\bibitem{Sohinger-DCDS} V. Sohinger, Bounds on the growth of high Sobolev norms of solutions to $2D$ Hartree equations, Discrete Contin. Dyn. Syst. 32 (2012), 3733-3771.
			
\bibitem{Staffilani-Duke} G. Staffilani, Quadratic forms for a 2-D semilinear Schr\"odinger equation, Duke Math. J., 86 (1997), 79-107.
			
\bibitem{Takaoka} H. Takaoka, On the growth of Sobolev norm for the cubic NLS on two dimensional product space.  J. Differ. Equations 394 (2024), 296-319. 
			
\bibitem{Zhao-Zheng} Z. Zhao, J. Zheng, Long time dynamics for defocusing cubic NLS on three dimensional product space. SIAM J. Math. Anal. 53 (2021), 3644-3660.
			
\bibitem{Zhong} S. Zhong, The growth in time of higher Sobolev norms of solutions to Schr\"odinger equations on compact Riemannian manifolds, J. Differ. Equa. 245 (2008), 359-376.
			
\end{thebibliography}
\end{document}